\newtheorem{theorem}{Theorem}[section]
\newtheorem{lemma}[theorem]{Lemma}
\newtheorem{proposition}[theorem]{Proposition}
\theoremstyle{definition}
\newtheorem{example}[theorem]{Example}
\newtheorem{assumption}[theorem]{Assumption}
\theoremstyle{remark}
\newtheorem{remark}{Remark}
\newcommand{\fe}{\mathrm{e}}
\newcommand{\bR}{{\mathbb R}}
\newcommand{\bT}{{\mathbb T}}
\newcommand{\bN}{{\mathbb N}}
\newcommand{\bZ}{{\mathbb Z}}
\numberwithin{equation}{section}
\newcommand{\bxi}{\boldsymbol{\xi}}
\begin{document}
\title[Quasi-Monte Carlo for NLS {with random potentials}]{Error estimate of a quasi-Monte Carlo time-splitting pseudospectral method for nonlinear Schr\"odinger equation with {random potentials}}

\author[Z. Wu]{Zhizhang Wu}
\address{\hspace*{-12pt}Z.~Wu: Department of Mathematics, The University of Hong Kong, Pokfulam Road, Hong Kong, China.}
\email{wuzz@hku.hk}

\author[Z. Zhang]{Zhiwen Zhang}
\address{\hspace*{-12pt}Z.~Zhang: Department of Mathematics, The University of Hong Kong, Pokfulam Road, Hong Kong, China.}
\email{zhangzw@hku.hk}

\author[X. Zhao]{Xiaofei Zhao}
\address{\hspace*{-12pt}X.~Zhao: School of Mathematics and Statistics \& Computational Sciences Hubei Key Laboratory, Wuhan University, Wuhan, 430072, China}
\email{matzhxf@whu.edu.cn}
\urladdr{http://jszy.whu.edu.cn/zhaoxiaofei/en/index.htm}




\begin{abstract}\noindent
In this paper, we consider the numerical solution of a nonlinear Schr\"odinger equation with spatial random potential. The randomly shifted quasi-Monte Carlo (QMC) lattice rule combined with the time-splitting pseudospectral discretization is applied and analyzed. The nonlinearity in the equation induces difficulties in estimating the regularity of the solution in random space. By the technique of weighted Sobolev space, we identify the possible weights and show the existence of QMC that converges optimally at the almost-linear rate without dependence on dimensions. The full error estimate of the scheme is established. We present numerical results to verify the accuracy and investigate the {wave propagation}.
\\ \\
{\bf Keywords:} nonlinear Schr\"odinger (NLS) equation; random potential; quasi-Monte Carlo (QMC) method; time splitting; pseudospectral method; error estimate. \\ \\
{\bf AMS Subject Classification:} 65C30; 65D32; 65M15; 65M70; 82B44.
\end{abstract}

\maketitle

\section{Introduction}
In this paper, we consider the following one-space-dimensional nonlinear Schr\"{o}dinger (NLS) equation with a spatial random potential on the torus:
\begin{equation}\label{model origin}
\left\{
\begin{aligned}
&i\partial_t\psi=-\frac{1}{2}\partial_x^2\psi+V(\omega,x)
\psi+\alpha|\psi|^2\psi,\quad x\in \bT, \ \omega \in \Omega,\ t>0,\\
&\psi(t=0,\omega,x)=\psi_{in}(x),\quad x\in \bT,\ \omega \in \Omega.
\end{aligned}
\right.
\end{equation}
Here $t\geq0$ is the time variable, $x\in\bT$ is the space variable with $\bT$ the one-dimensional torus (periodic boundary condition in $x$), $\omega\in\Omega$ is the random sample with $\Omega$ the sample space, $\psi=\psi(t,\omega,x)$ is the complex-valued unknown with $\psi_{in}$ the given initial data, $V(\omega, x)$ is a given real-valued random potential, and $\alpha\in\bR$ is a given parameter describing the strength of the nonlinearity. {We refer to the NLS equation \eqref{model origin} as the random NLS.}

It is well known that in the one-space-dimensional case in the presence of a random potential and in the absence of nonlinearity {(i.e., $\alpha=0$)} {with probability one all the states are exponentially localized \cite{altmann2020quantitative,altmann2022localization,Anderson,filoche2012universal,lee1985disordered}}. {With $\alpha = 0$ the equation in \eqref{model origin} is a linear Schr\"{o}dinger equation with random potentials}, which belongs to the family of the Anderson models named after P. Anderson. In 1958, P. Anderson considered the discrete linear Schr\"odinger equations and discovered the localization of waves due to the disorder induced by the random potential \cite{Anderson}. Ever since then, the Anderson localization effect has been widely considered in many applications such as semiconductors and acoustic waves.

When $\alpha\neq0$, the interplay between disorder and nonlinear effects leads to new interesting physics.
In spite of the extensive research, many fundamental problems still remains open. A long-lasting question in mathematics and physics is whether a nonlinearity especially the defocusing nonlinearity, i.e., $\alpha>0$ in (\ref{model origin}), could break the localization \cite{soffer}. Many analytical and numerical investigations have been done to address this question. {The study of the long-time behavior of wave propagation so far still relies on numerical simulations, which are performed on the discrete NLS models \cite{prl2,Shepelyansky}.} Our model problem (\ref{model origin}) is the continuous version of the {random} NLS. It has been considered physically for modeling the Anderson localization of Bose-Einstein Condensates \cite{pra,PRLnew} and for the study of nonlinear dispersive wave dynamics in a disordered medium \cite{Conti,nonlinear wave,CAL2}. {For theoretical results, we refer readers to \cite{debussche3,debussche1} for the well-posedness results of (\ref{model origin}) and to \cite{Labbe,CAL1,Gu} for other related mathematical properties.} Here we consider the {one-space-dimensional} case and the torus domain in (\ref{model origin}) for simplicity.

In this work, we consider the parametrization of the random potential in the manner of the Karhunen-Lo\`{e}ve expansion \cite{Sloan NM,Sloan JCP,kuo2012quasi,Xiu}:
\begin{align}
V(\omega,x)=v_0(x)+\sum_{j=1}^{\infty}\sqrt{\lambda_j}\xi_j(\omega)v_j(x),\quad x\in \bT,\ \omega\in\Omega,
\label{random-potential origin}
\end{align}
where $v_ 0(x)$ is a deterministic function, $\{ v_j(x) \}_{j \geq1}$ are the physical components, $\lambda_1 > \lambda_2>\cdots > 0$ are the corresponding strengths and $\bxi(\omega) = (\xi_1(\omega),\xi_2(\omega), \ldots)^T$ are {independent and identically distributed (i.i.d.) uniform random variables} on $\left[ -\frac{1}{2}, \frac{1}{2} \right]$.
Thus,
$$
\bxi \in \left[ -\frac{1}{2}, \frac{1}{2} \right]^\bN =: U.
$$
According to the Doob-Dynkin lemma \cite{kallenberg1997foundations}, the solution $\psi$ can be represented by parametric functions parameterized by $\bxi$. Hence, (\ref{model origin}) can be rewritten as the following parametric nonlinear Schr\"{o}dinger equation with a random potential $V(\bxi,x)$ given by \eqref{random-potential origin}:
\begin{equation}\label{nls}
\left\{
\begin{aligned}
&i\partial_t\psi = -\frac{1}{2} \partial_x^2 \psi + V(\bxi,x) \psi + \alpha|\psi|^2\psi,
\quad x\in \bT, \ \bxi \in U,\ t>0,\\
&\psi(t=0,\bxi,x) = \psi_{in}(x),\quad x\in \bT,\ \bxi \in U,
\end{aligned}
\right.
\end{equation}
with $\psi=\psi(t,\bxi,x)$, $\psi_{in}$ being deterministic,  and
we shall consider that $\alpha \neq 0$.

Along the numerical aspect of (\ref{model origin}) or (\ref{nls}), let us mention several related works. The work of Henning and Peterseim \cite{dnls-fd} considered the fully deterministic case of (\ref{model origin}) and addressed the convergence issue of a Crank-Nicolson finite element discretization. Zhao later addressed the corresponding convergence of an exponential integrator spectral method in \cite{Zhao} and numerically investigated the stochastic case. The work of Kachman, Fishman and Soffer \cite{CAL2}  proposed an iterative integrator in time for the linear model. However, to our best knowledge, the existing studies on (\ref{model origin}) so far are yet to address the discretization in the random variable.  The sampling method in the aforementioned works as well as in the physical works  \cite{prl2,pra,Shepelyansky} all consider the classical Monte-Carlo method. It is known that the classical Monte-Carlo offers only a half-order convergence rate and so practical computing would require a large number of samples for accurately capturing the statistical quantities of interest. To increase the efficiency and accuracy of quantifying the uncertainty, methods like the quasi-Monte Carlo sampling \cite{Caflisch,Sloan Act,wang2003strong}, polynomial chaos expansion \cite{Cohen,Ghanem,HuJin,Xiu} and stochastic collocation method \cite{babuvska2007stochastic,nobile2008sparse,TangZhou} have later been developed for PDEs with random inputs.

In this work, we shall consider the quasi-Monte Carlo (QMC) sampling method for the NLS (\ref{nls}). The origin of QMC dates back to around 1960, which proposes to sample deterministically by using the low-discrepancy sequences \cite{halton1960efficiency,sobol1967distribution}.
{The convergence order of QMC with respect to the number of samples is higher than that of the classical Monte Carlo method; however, it is dependent on the dimension of the random variable \cite{Caflisch,niederreiter1992random}.} To overcome this problem, some state-of-art randomization techniques of different kinds have been further introduced to the QMC. We refer the interested readers to \cite{Sloan Act,kuo2011quasi} and the references therein for a detailed review. In this work, we would follow {the randomly shifted QMC lattice rule} \cite{Dick,Nuyens,Sloan_SINUM}. In such a way, Graham, Kuo, Schwab and Sloan et al. proposed the QMC method with finite element discretizations to solve linear elliptic equations \cite{gilbert2019analysis,Sloan NM,Sloan JCP,kuo2012quasi}. Via the technique of the weighted Hilbert space with the carefully chosen weights \cite{Sloan NM,kuo2012quasi}, they established the almost-linear convergence rates in $N$ without dependence on the dimension.

By combining the randomly shifted QMC lattice rule with the popular time-splitting Fourier pseudospectral (TSFP) method \cite{Besse,BaoCai,Jin}, we propose an efficient numerical method to solve (\ref{nls}). Moreover, we aim to analyze for the full convergence result of the scheme. The difficulty/novelty of the analysis mainly come from the following facts: i) the nonlinearity in the equation (\ref{nls}) makes it complicated to estimate of the regularity of the solution in the parametric space; ii) the physical observable of interest is a nonlinear functional of the solution; iii) the full scheme involves the discretizations in time, physical space and parametric space. Under suitable assumptions on the decaying rate of the potential (\ref{random-potential origin}), we point out the possible choice of the weights in the framework of the weighted Sobolev space, and we show the existence of a randomly shifted QMC lattice rule which can achieve an almost-linear convergence rate without dependence on dimensions for the expectation of the physical observable. The optimal root-mean-square error estimate of the QMC-TSFP scheme in time, physical space, and random space will then be established and verified numerically. Finally, we numerically investigate the {wave propagation} in the {random} NLS models.

The rest of the paper is organized as follows. In Section \ref{sec:method}, we present the detailed QMC-TSFP scheme and our main result about its convergence theorem. Section \ref{sec:pde} and Section \ref{sec:Fullscheme} are devoted to rigorously establishing the theorem by analyzing the error on the PDE and on the scheme in a sequel. Numerical results are presented in Section \ref{sec:result}.  Concluding remarks are made in Section \ref{sec:conclusion}.

\section{Numerical method}\label{sec:method}
In this section, we shall first present the numerical discretization of the NLS (\ref{nls}). Then, we shall give the  convergence result of the scheme.

\subsection{Dimension truncation and discretization}
First of all, for practical computation we need to truncate the Karhunen-Lo\`{e}ve expansion \eqref{random-potential origin} of the random potential $V(\bxi,x)$ into a finite sum. That is to say,
we choose an integer $m>0$ large enough and truncate the parameterized  random potential as:
\begin{align}
V_m(\bxi,x)=v_0(x)+\sum_{j=1}^{m}\sqrt{\lambda_j}\xi_j(\omega)v_j(x),\quad x\in \bT,\ {\xi = (\xi_1, \ldots, \xi_m)^T} \in U_m:=\left[-\frac12,\frac12\right]^m.
\label{random-potential}
\end{align}
We emphasize that the approximation of the truncated random potential $V_m(\bxi,x)$ to $V(\bxi,x)$ depends on the decaying rates of the strengths. With the above finite dimensional random potential, we consider the following truncated NLS problem:
\begin{equation}\label{nls trun}
\left\{
\begin{aligned}
&i\partial_t\psi_m = -\frac{1}{2} \partial_x^2 \psi_m + V_m(\xi,x) \psi_m + \alpha|\psi_m|^2\psi_m,
\quad x\in \bT, \ \xi \in U_m,\ t>0,\\
&\psi_m(t=0,\xi,x) = \psi_{in}(x),\quad x\in \bT,
\end{aligned}
\right.
\end{equation}
which is an approximation to (\ref{nls}).
Here we have $\psi_m(t,\xi,x)=\psi(t,(\bxi_{\{1:m\}},\mathbf{0}),x)$ with $\xi=\bxi_{\{1:m\}}$.

With one precise sample for $\xi$, (\ref{nls trun}) reads as a cubic NLS with the given potential $V_m$, and so it can be numerically solved by any of the classical algorithms. Concerning the periodic boundary condition, we shall consider in this paper the time-splitting Fourier pseudospectral method, which is simple to begin with but undoubtedly one of the most popular numerical methods for NLS \cite{BaoCai,Jin,Suzuki}. It begins by splitting (\ref{nls trun}) into two subflows $\Psi^\mathrm{k}_s$ and $\Psi^\mathrm{p}_s$ as
\begin{subequations}\label{subflow}
\begin{align}
\Psi^\mathrm{k}_s: i\partial_t\psi_m &= -\frac{1}{2} \partial_x^2 \psi_m,\quad t\in(0,s]; \\
\Psi^\mathrm{p}_s: i\partial_t\psi_m &= V_m\psi_m + \alpha|\psi_m|^2\psi_m,\quad t\in(0,s].
\end{align}
\end{subequations}
Note that $V_m$ and $\alpha$ are real-valued, so both of the above equations can be integrated exactly in time, and then the scheme composes as $\psi_m(t_{n+1},\xi,x)\approx \Psi_\frac{\tau}{2}^\mathrm{k}\circ \Psi_\tau^{\mathrm{p}}\circ
\Psi_\frac{\tau}{2}^\mathrm{k}(\psi_m(t_n,\xi,x))$, where $\tau=t_{n+1}-t_{n}$ is the time step. Let us briefly present the detailed scheme below.

Denote $\bT=[-L,L]$ with some $L>0$ and choose some even integer $M>0$. The physical space is discretized as $x_k=-L+kh$ with $h=2L/M$ for $k=0,1,\ldots,M$. For the time axis, we take $\tau=\Delta t>0$ and denote $t_n=n\tau$ for $n=0,1,\ldots$. Denote the numerical solution as $\psi_{m,k}^n=\psi_{m,k}^n(\xi)\approx \psi_m(t_n,\xi,x_k)$ with $\psi_{m,k}^0=\psi_{in}(x_k)$, the scheme of the \emph{time-splitting Fourier pseudospectral method (TSFP)} for solving (\ref{nls trun}) reads for $n\geq0$,
\begin{subequations}\label{scheme}
  \begin{align}
   &\psi_{m,k}^{n+1}=\sum_{l=-M/2}^{M/2-1}\widetilde{(\psi_{m,k}^{n+1})}_l\fe^{iu_l(x_k+L)},\quad
    \widetilde{(\psi_{m,k}^{n+1})}_l=\fe^{-iu_l^2\tau/4}\widetilde{(\psi_{m,k}^{**})}_l,\quad u_l=\frac{\pi l}{L},\\
    &\psi_{m,k}^{**}=\fe^{-i[V_m(\xi,x_k)+\alpha|\psi_{m,k}|^2]\tau}\psi_{m,k}^{*},\quad
    \widetilde{(\psi_{m,k}^{*})}_l=\fe^{-iu_l^2\tau/4}\widetilde{(\psi_{m,k}^{n})}_l,\quad 0\leq k\leq M,
  \end{align}
\end{subequations}
with $\widetilde{(\phi)}_l=\frac{1}{M}\sum_{k=0}^{M-1}\phi_k\fe^{-iu_l(x_k+L)}$ being the discrete Fourier coefficients of a discrete function $\phi_k$. With the discrete values, we can define a continuous version of the numerical solution as the interpolation:
\begin{equation}\label{num interpolate}
I_M\psi_m^n(x)=\sum_{l=-M/2}^{M/2-1}\widetilde{(\psi_{m,k}^{n})}_l\fe^{iu_l(x+L)},\quad
x\in \bT,\ n\geq0.\end{equation}
The TSFP (\ref{scheme}) is fully explicit in time and efficient with the computational cost $\mathcal{O}(M\log M)$ per time level under the help of fast Fourier transform. Moreover, it is time symmetric and preserves the discrete $l^2$-norm of the solution. These properties offer it unconditional stability and the reliable long-time performance \cite{Faou}. Thus, it has been used in \cite{soffer,Shepelyansky} to simulate the nonlinear Anderson models. {For simulating {wave propagation}, the torus domain setup for (\ref{nls}) or (\ref{nls trun}) is a valid truncation of the whole space problem provided that the domain  size $L>0$ is taken large enough and the initially localized solution in $\bT$ is away from the boundary within the time of computation.}

\subsection{Quasi-Monte Carlo sampling}
With the solution $\psi$ of (\ref{nls}), the physical observables in applications are some functionals of $\psi$, i.e., $G(\psi(t,\bxi,\cdot)),\  t\geq0, \ \bxi\in U,$
for some functional $G$ in the energy space of the solution.
Then, the expected value with respect to the random parameter is given by the following integral in infinite dimensions:
\begin{align}\label{expect def}
  \mathbb{E}[G](t):=\lim_{m\to\infty}\int_{U_m}G(\psi_m(t,\xi,\cdot))d\xi,
\end{align}
with the truncated solution $\psi_m$ of (\ref{nls trun}). Therefore, for some sufficiently large $m>0$, $\mathbb{E}[G]$ can be  approximated by
$\mathbb{E}[G](t)\approx\int_{U_m}G(\psi_m(t,\xi,\cdot))d\xi.$
{Note that $|\psi|^2$ represents the probability density function in quantum physics. Many widely concerned physical observables are linear functionals of $|\psi|^2$. One typical example is the center of mass, which is frequently considered in the context of Anderson localization \cite{soffer}.}
{Therefore, in the following, we shall focus on the expected value of $G(|\psi|^2)$, where $G$ is a linear functional. In order to estimate the convergence of QMC for computing the expected value of $G(|\psi|^2)$, we shall analyze the parametric regularity of $G(|\psi|^2)$ or equivalently the parametric regularity of $|\psi|^2$ in Section \ref{subsec: parametric regularity}, which is one of our main contributions in this work.} Let us denote
\begin{align}
&\mathcal{G}(t,\bxi) := G(|\psi(t, \bxi,\cdot)|^2), \qquad \mathcal{G}^m(t,\xi) := G(|\psi_m(t, \xi,\cdot)|^2),\label{G_k def}
\end{align}
for short in the following, and so we look for $\mathbb{E}[\mathcal{G}](t)$.

To evaluate the expectation of a general function $F(\xi)$ under a fixed $m$, the strategy of a quasi-Monte Carlo (QMC) method is an equal weight quadrature rule, i.e.,
\begin{align}\label{QMC1}
\mathbb{E}_m[F]:=\int_{U_m}F(\xi)d\xi\approx\frac{1}{N} \sum_{p = 1}^N {F}(\xi^{(p)})  =: Q_{m,N}[{F}],
\end{align}
where  $\xi^{(p)}\in U_m$ is the quadrature point and $N>0$ is the total number of samples.
Classical QMC is done by constructing some deterministic quadrature points, such as the Halton sequence \cite{halton1960efficiency} or the Sobol sequence \cite{sobol1967distribution}. However, a drawback is that the approximation error of (\ref{QMC1}) depends on the dimension $m$. When $m$ is large, the QMC might lose its efficiency over the standard Monte Carlo method.

To provide a practical error estimate and reduce the biased error,  some randomization techniques have been developed to construct the quadrature points. Here, we adopt the so-called \emph{shifted rank-one lattice rule} \cite{Sloan Act} to get $\xi^{(p)}$. That is
\begin{equation}\label{lattice}
  \xi^{(p)}=\mathrm{frac}\left(\frac{pz}{N}+\Delta\right)-\frac12,\quad p=1,\ldots,N,
\end{equation}
where  $\mathrm{frac}(w)$ means to take the fractional part of each component of a vector $w$, $\Delta\in[0,1]^m$ is a uniformly distributed random shift and $z\in\bZ^m$ is known as the generating vector. A specific generating vector $z$ can be constructed by the component-by-component construction (CBC) approach \cite{Sloan Act} to minimize the shifted-averaged worst-case error function, which will be detailed in our analysis later in Section \ref{sec:QMC-error}. {For every $\xi^{(p)}$, we solve the NLS \eqref{nls trun} using the TSFP scheme (\ref{scheme}). Then, we obtain $\{\psi_m^n(\xi^{(p)})\}_{p = 1}^N$ to compute the numerical expectation of some quantity of interest. The whole numerical scheme shall be referred to as QMC-TSFP in the rest of the paper.} The practical computing of $\mathbb{E}[\mathcal{G}]$ via the randomly shifted QMC-TSFP method is then implemented as:
\begin{enumerate}
\item Given the truncated dimension $m$ in \eqref{random-potential}, choose $M, \tau$ for discretizations in TSFP, and take $N$ as the number of samples for one random shift and $R$ as the number of random shifts;
\item Construct the generating vector $z$ using the CBC approach;
\item Generate i.i.d. random shifts $\Delta_1, \ldots, \Delta_R$ from the uniform distribution on $[0, 1]^m$. For each $\Delta_r$ with $r = 1, \ldots, R$, obtain the sample set $\{ \xi^{(r, p)} = \mathrm{frac}\left( \frac{pz}{N} + \Delta_r \right) - \frac{1}{2}: p = 1, \ldots, N \}$;
\item For each $\xi^{(r, p)}$ with $r = 1, \ldots, R$ and $p = 1, \ldots, N$, solve \eqref{nls trun} via \eqref{scheme} and obtain $I_M\psi_m^n(\xi^{r, p}, x)$ in (\ref{num interpolate});
\item Compute $\{ Q^{(r)}_{m, N}(G(|I_M\psi_m^n|^2)): r = 1, \ldots, R \}$, where
    \begin{align} \label{eq: QMC approximation using one single shift}
    Q^{(r)}_{m, N}(G(|I_M\psi_m^n|^2)) = \frac{1}{N} \sum_{p = 1}^N G(|I_M\psi_m^n(\xi^{(r, p)}, \cdot)|^2)
    \end{align}
    is the approximation of $\mathbb{E}_m[G(|I_M\psi_m^n|^2)]$ with one single shift $\Delta_r$ in the QMC rule;
\item Take the average over all the random shifts to get
    \begin{align} \label{eq: final QMC approximation}
    \overline{Q}_{m, N, R}(G(|I_M\psi_m^n|^2)) = \frac{1}{R} \sum_{r = 1}^R Q^{(r)}_{m, N}(G(|I_M\psi_m^n|^2))
    \end{align}
    as the final approximation of $\mathbb{E}[G(|\psi(t_n,\cdot,\cdot)|^2)]$.
\end{enumerate}
With a precise shift $\Delta_r$, the total computational cost of QMC-TSFP is $\mathcal{O}(NM\log M)$. The number of shifts $R$ is usually kept small in practice, e.g., $10$-$50$ \cite{Sloan Act}.

\subsection{Main result}\label{subsec:main}
We shall present here the main convergence result. {We first introduce the Bochner spaces we will consider for the potential $V$ and the wave functions $\psi$. For the potential $V$, we will consider the Bochner space $L^\infty(U;H^s(\bT))$ with the norm
$$
\| V \|_{L^\infty(U;H^s(\bT))} = \sup \limits_{\bxi \in U} \| V(\bxi) \|_{H^s(\bT)}.
$$
For the wave function $\psi$, we will consider the Bochner space $L^\infty((0,T) \times U;H^s(\bT))$ with the norm
$$
\| \psi \|_{L^\infty((0,T) \times U;H^s(\bT))} = \sup \limits_{t \in (0, T), \bxi \in U} \| \psi(t, \bxi) \|_{H^s(\bT)}.
$$}
Assume that the functional, the potential and the initial data of (\ref{nls}) satisfy the following conditions.
\begin{assumption}\label{assump}
Assume the linear functional $ G \in (H^{1}(\bT))' $, which is the dual space of $H^{1}(\bT)$. For some fixed $s\geq 1$, assume that $\psi_{in}\in H^s(\bT)$, $V\in L^\infty(U;H^s(\bT))$ and
$\|V_m\|_{L^\infty(U;H^s)}\leq \|V\|_{L^\infty(U;H^s)}$ for any $m>0$. {Concerning the relevant physical context, assume further that $\psi_{in}$ is localized in the torus domain $\bT=[-L,L]$ and $L>0$ is large enough such that the wave is yet to reach the boundary within the time of interest.}
\end{assumption}
In addition, to achieve the optimal convergence rate of QMC, we would further ask for the following technical assumption on $V$, {which shows the property of the decaying rates of the terms in the series (\ref{random-potential})}.
\begin{assumption}\label{assump2}
Assume that the potential $V$ in the form of  (\ref{random-potential origin}) satisfies:
\begin{equation}
\|V_m-V\|_{L^\infty(U;H^{1})}\leq Cm^{-\chi},\qquad
\sum_{j=1}^{\infty}\left[3^j\sqrt{\lambda_j}\|v_j\|_{H^{1}}\right]^{1/2}<\infty,
\end{equation}
for some constants $C,\chi>0$.
\end{assumption}
Then, we have the following error estimate result for the QMC-TSFP method.
\begin{theorem}\label{thm}
Assume that Assumption \ref{assump}, Assumption \ref{assump2} hold  and $\psi_{in}\in H^s(\bT)$ with $s\geq5$. Then, a randomly shifted QMC lattice rule can be constructed to solve (\ref{nls trun}) till any fixed $T>0$, and there exist constants $\tau_0,h_0>0$ independent of $m$ so that with the numerical solution $I_M\psi_m^n$ from the QMC-TSFP scheme (\ref{scheme}) for $\tau\leq \tau_0$ and $h\leq h_0$, the root-mean-square error satisfies: for any $\delta\in(0,1/2)$ and $N\leq10^{30}$,
\begin{align}\label{main estimate}
\sqrt{\mathbb{E}^{\Delta}\left[\Big|\mathbb{E}[\mathcal{G}](t_n)-Q_{m,N}[G(|I_M\psi_m^n|^2)]\Big|^2
\right]}\leq C
(\tau^2+h^{s-1}+m^{-\chi}+N^{\delta - 1}),
\end{align}
for all $t_n\in[0, T]$  and some constant $C>0$ independent of $m,\tau,h,N$, {where $\mathbb{E}^{\Delta}$ denotes the expectation with respect to the random shift.} 
\end{theorem}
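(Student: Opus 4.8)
The plan is to split the total error by the triangle inequality into three pieces — dimension truncation, QMC quadrature, and the time/space discretization of the scheme — and to control each in the root-mean-square sense over the random shift $\Delta$. Inserting the intermediate quantities $\mathbb{E}_m[\mathcal{G}^m(t_n)]$ and $Q_{m,N}[\mathcal{G}^m(t_n)]$, I would write
\begin{align*}
\mathbb{E}[\mathcal{G}](t_n)-Q_{m,N}[G(|I_M\psi_m^n|^2)]
&=\big(\mathbb{E}[\mathcal{G}](t_n)-\mathbb{E}_m[\mathcal{G}^m(t_n)]\big) \\
&\quad+\big(\mathbb{E}_m[\mathcal{G}^m(t_n)]-Q_{m,N}[\mathcal{G}^m(t_n)]\big) \\
&\quad+\big(Q_{m,N}[\mathcal{G}^m(t_n)]-Q_{m,N}[G(|I_M\psi_m^n|^2)]\big),
\end{align*}
and label the three brackets (A), (B), (C). Since (A) is deterministic and (C) will be bounded uniformly in the samples and in $\Delta$, while the shift-averaged lattice rule is unbiased so that $\mathbb{E}^\Delta[(\mathrm{B})]=0$, Minkowski's inequality in $L^2(\Delta)$ reduces the task to bounding $|(\mathrm{A})|$, $\sqrt{\mathbb{E}^\Delta[|(\mathrm{B})|^2]}$, and $\sqrt{\mathbb{E}^\Delta[|(\mathrm{C})|^2]}$ separately, which should produce the terms $m^{-\chi}$, $N^{\delta-1}$, and $\tau^2+h^{s-1}$, respectively.

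For (A), I would use the linearity and $H^1$-boundedness of $G$ together with the identity $|\psi|^2-|\psi_m|^2=(\psi-\psi_m)\overline{\psi}+\psi_m(\overline{\psi}-\overline{\psi_m})$ and the Banach-algebra property of $H^1(\bT)$ to reduce the truncation error to $\|\psi-\psi_m\|_{L^\infty((0,T)\times U;H^1)}$; a PDE stability estimate expressing the continuous dependence of the NLS solution on its potential (to be established in Section \ref{sec:pde}), combined with the first bound in Assumption \ref{assump2}, then yields $|(\mathrm{A})|\le Cm^{-\chi}$. For (C), the same product identity and the linearity of $G$ reduce each summand of the equal-weight average to $\|\psi_m(t_n,\xi^{(p)})-I_M\psi_m^n(\xi^{(p)})\|_{H^1}$; the TSFP convergence estimate of Section \ref{sec:Fullscheme}, which I would need to make \emph{uniform in the parameter} $\xi\in U_m$, then gives the bound $\tau^2+h^{s-1}$ under the restrictions $\tau\le\tau_0$, $h\le h_0$ (this is where $s\ge 5$ enters, to supply enough spatial regularity for the second-order splitting and the spectral accuracy). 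Because this bound is uniform over the samples and independent of $\Delta$, both the average and its RMS inherit it.

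The heart of the proof, and the main obstacle, is (B). Here I would invoke the standard randomly shifted lattice-rule theorem of Kuo–Schwab–Sloan type: with a generating vector $z$ produced by the CBC construction that minimizes the shift-averaged worst-case error over a weighted Sobolev space, one has
\[
\sqrt{\mathbb{E}^\Delta\big[|\mathbb{E}_m[F]-Q_{m,N}[F]|^2\big]}\le e^{\mathrm{wc}}_{m,N}\,\|F\|_\gamma ,
\]
where $e^{\mathrm{wc}}_{m,N}\le C_\delta N^{\delta-1}$ with $C_\delta$ \emph{independent of $m$}, provided the weights $\gamma_u$ are tuned to match the parametric regularity of $F=\mathcal{G}^m$. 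The genuinely new and hard step is to establish that regularity: I would differentiate the truncated NLS \eqref{nls trun} repeatedly in $\xi$ and control the mixed derivatives $\partial^\nu_\xi\psi_m$ in $H^1$ by induction on $|\nu|$, the cubic nonlinearity and the quadratic observable $|\psi|^2$ generating, via Leibniz, sums over subsets with factorial growth. Converting this into a product-and-order-dependent bound of the form $|\partial^\nu_\xi\mathcal{G}^m|\le C\,|\nu|!\prod_j b_j^{\nu_j}$ is the crux; the factor $3^j$ appearing in Assumption \ref{assump2} is precisely the fingerprint of the cubic term in this induction, and the summability $\sum_j[3^j\sqrt{\lambda_j}\|v_j\|_{H^1}]^{1/2}<\infty$ is exactly what makes the optimal choice of weights yield a dimension-independent $C_\delta$, hence the almost-linear rate $N^{\delta-1}$ for all $N\le 10^{30}$. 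Collecting the three contributions then completes the estimate.
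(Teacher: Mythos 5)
Your decomposition and overall architecture coincide with the paper's: the paper also splits off the scheme error $Q_{m,N}[G(|I_M\psi_m^n|^2)]-Q_{m,N}[\mathcal{G}^m]$ and bounds it uniformly in $\xi$ and $\Delta$ via the TSFP estimate (Proposition \ref{prop2}), while your pieces (A) and (B) are exactly what the paper packages into Proposition \ref{prop1} (Lemma \ref{lm:trun G} for the truncation error via the $H^1$ stability of the NLS with respect to the potential, and Lemma \ref{lm: qmc pde bound} for the shifted-lattice error). Your treatment of (A) and (C) matches the paper's in every essential respect.

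The one genuine gap is in the crux of (B). You posit a parametric regularity bound of the form $|\partial^\nu\mathcal{G}^m|\le C\,|\nu|!\prod_j b_j^{\nu_j}$, which is the standard shape for linear elliptic problems and would let you apply off-the-shelf POD weights. The induction on $|\nu|$ for the cubic NLS does not deliver this: at each step the Leibniz expansion of $|\psi_m|^2\psi_m$ produces a double sum with $3^{|\nu|}-3$ lower-order triple products, and compounding these over the induction gives the paper's bound $3^{(|\nu|+2)(|\nu|-1)/2}C_T^{2|\nu|-1}\prod_j(\sqrt{\lambda_j}\|v_j\|_{H^1})^{\nu_j}$ (Lemma \ref{lemma:regularitywrtrvs}), i.e.\ growth like $3^{|\nu|^2/2}$, far beyond factorial. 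This is not a cosmetic difference: with a genuine $|\nu|!$ bound the summability hypothesis $\sum_j[3^j\sqrt{\lambda_j}\|v_j\|_{H^1}]^{1/2}<\infty$ would be unnecessarily strong, whereas with the actual superexponential bound the dimension-independence of $C_\delta$ only survives because of a bespoke weight choice and a H\"older splitting that redistributes $3^{|\nu|^2}$ as $\prod_j 3^{\nu_j s_j}$ with $s_j=\sum_{l\le j}\nu_l$, absorbing it into the exponentially decaying coefficients $3^j b_j$ (this is precisely where Assumption \ref{assump2} is consumed, in the proof of Lemma \ref{lm: qmc pde bound}). So while you correctly identify the $3^j$ as the fingerprint of the cubic term, your stated plan for (B) would stall at the point where you try to feed a factorial-type bound into the standard weight optimization; you would need to first prove the weaker (superexponential) derivative bound and then redo the weight selection around it.
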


The rest of the paper is devoted to rigorously establishing the above theorem. It will be done in a sequel by analyzing the QMC error on the PDE in Section \ref{sec:pde} and the error of the TSFP scheme in Section \ref{sec:Fullscheme}.

{
\begin{remark}
The condition $N\leq10^{30}$ is purely a technical condition since we use in the proof of Theorem \ref{thm} the property that the Euler totient function $\varphi(N)$ satisfies $1/\varphi(N)\leq 9/N$ for $N\leq10^{30}$ \cite{Sloan NM}; see also the proof of Lemma \ref{lm: qmc pde bound}. In practice, the number of samples will seldom exceed $10^{30}$.
\end{remark}
}

\begin{remark}
  The Assumption \ref{assump2} basically requires that the eigenpairs in (\ref{random-potential origin}) are exponentially decaying, which is stronger than the usual algebraic ones in the literature for linear model problems, e.g., \cite{Sloan NM,kuo2012quasi}. Such exponential decay assumption is a technical condition for the rigorous proof by means of analysis in the weighted Sobolev space. It is essentially raised by the nonlinearity. In practice, it could be satisfied if the random potential $V(\bxi,x)$ is a smooth and periodic (or Schwartz for the whole space case) function in terms of $x$ uniformly for all $\bxi$. Generally, the decaying speed of the eigenvalues depend on the regularity of the covarivance kernel. We refer interested readers to \cite{schwab2006karhunen} for more details.
\end{remark}

\begin{remark}
Under weaker assumptions than those in Theorem \ref{thm}, weaker convergence rates can be established similarly. For instance, if we only have $s\geq3$ then the temporal accuracy of TSFP will drop down to one (see \cite{Lubich} for the deterministic case). On the other hand, if the decaying rate  in Assumption \ref{assump2} is slower, then the sampling error from QMC might drop down to $1/2$ \cite{kuo2012quasi}. Here we omitted these cases for simplicity.
\end{remark}

{
\begin{remark}
The physical context of Anderson localization in NLS is associated with many physical parameters, including the time interval, spatial domain size, strength of nonlinearity, and randomness of the potential. Our primary focus in Theorem \ref{thm} is on the convergence rates with respect to numerical parameters. The dependence of the error estimate on physical parameters is encapsulated in the error constant $C>0$ in (\ref{main estimate}). Determining the explicit and particularly optimal dependence of the constant $C$ on these physical parameters requires substantial additional efforts and will be a subject of our future research.
\end{remark}
}

\section{QMC on the PDE}\label{sec:pde}

In this section, we aim to analyze the convergence of the QMC sampling for the NLS model (\ref{nls}) or (\ref{nls trun}) on the continuous level. We shall largely follow the framework in \cite{kuo2012quasi}.

\subsection{Regularity in physical space}

\begin{lemma}\label{lm1}
Under  Assumption \ref{assump}, for each $\bxi \in U$ the solution $\psi(t,\bxi,x)$ of (\ref{nls}) is globally well-posed and uniformly bounded in $H^1(\bT)$. Moreover, for any fixed $0<T<\infty$, we have $\psi\in L^\infty((0,T) \times U;H^s(\bT))$.
\end{lemma}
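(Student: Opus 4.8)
The plan is to combine standard local well-posedness theory with the two conservation laws of the equation, and then upgrade to higher regularity by an energy estimate, keeping every constant uniform in $\bxi$. First I would set up local well-posedness in $H^1(\bT)$ via the Duhamel formula
$$
\psi(t) = \fe^{\frac{i}{2}t\partial_x^2}\psi_{in} - i\int_0^t \fe^{\frac{i}{2}(t-\sigma)\partial_x^2}\big(V\psi + \alpha|\psi|^2\psi\big)(\sigma)\,d\sigma
$$
and a contraction-mapping argument. The propagator $\fe^{\frac{i}{2}t\partial_x^2}$ is unitary on each $H^s(\bT)$, and since $H^1(\bT)$ is a Banach algebra in one dimension we have $\|V\psi\|_{H^1}\lesssim\|V\|_{H^1}\|\psi\|_{H^1}$ and $\||\psi|^2\psi\|_{H^1}\lesssim\|\psi\|_{H^1}^3$. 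This produces a local solution on a time interval whose length depends only on $\|\psi_{in}\|_{H^1}$ and $\|V\|_{L^\infty(U;H^1)}$, hence uniformly in $\bxi$.

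Next I would record the conserved quantities. Because $V$ and $\alpha$ are real, pairing the equation with $\bar\psi$ in $L^2$ and taking imaginary parts gives mass conservation $\|\psi(t)\|_{L^2}=\|\psi_{in}\|_{L^2}$, while pairing with $\partial_t\bar\psi$ and taking real parts gives conservation of the energy $E[\psi]=\int_\bT\big(\tfrac12|\partial_x\psi|^2 + V|\psi|^2 + \tfrac{\alpha}{2}|\psi|^4\big)\,dx$; on the torus all boundary terms from integration by parts vanish by periodicity. To extract a time-uniform $H^1$ bound I would control the potential term by $\|V\|_{L^\infty}\|\psi\|_{L^2}^2$ (fixed by mass conservation), and, in the focusing case $\alpha<0$, bound $\int|\psi|^4$ through the one-dimensional Gagliardo--Nirenberg inequality $\|\psi\|_{L^4}^4\lesssim\|\psi\|_{L^2}^3\|\partial_x\psi\|_{L^2}+\|\psi\|_{L^2}^4$ followed by Young's inequality, so that the $\|\partial_x\psi\|_{L^2}^2$ contribution is absorbed into $E[\psi]$. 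Since the cubic nonlinearity is $L^2$-subcritical in one dimension this closes and yields $\sup_t\|\psi(t)\|_{H^1}\le C(\|\psi_{in}\|_{H^1},\|V\|_{L^\infty(U;L^\infty)})$, uniformly in $\bxi$; together with the local theory this gives global well-posedness and uniform boundedness in $H^1(\bT)$.

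Finally, for the $H^s$ statement I would carry out a higher-order energy estimate. Applying $\partial_x^s$ to the equation, pairing with $\partial_x^s\psi$ in $L^2$ and taking real parts, the genuinely top-order terms $\mathrm{Re}\,\langle -iV\partial_x^s\psi,\partial_x^s\psi\rangle$ and $\mathrm{Re}\,\langle -i\alpha|\psi|^2\partial_x^s\psi,\partial_x^s\psi\rangle$ vanish because $V$ and $\alpha$ are real and the corresponding integrals $\int V|\partial_x^s\psi|^2$, $\int|\psi|^2|\partial_x^s\psi|^2$ are real. What remains are commutator/Leibniz remainders, which I would control by the algebra property of $H^s(\bT)$ together with the one-dimensional embedding $H^1(\bT)\hookrightarrow L^\infty(\bT)$ to obtain $\frac{d}{dt}\|\psi\|_{H^s}^2\le C\big(\|V\|_{H^s}+\|\psi\|_{H^1}^2\big)\|\psi\|_{H^s}^2$. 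Inserting the uniform $H^1$ bound from the previous step and the uniform bound $\|V\|_{L^\infty(U;H^s)}<\infty$, Gr\"onwall's inequality yields $\|\psi(t,\bxi)\|_{H^s}\le\|\psi_{in}\|_{H^s}\fe^{CT}$ for $t\in[0,T]$ with $C$ independent of $\bxi$, which is exactly $\psi\in L^\infty((0,T)\times U;H^s(\bT))$.

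I expect the main obstacle to be this $H^s$ energy estimate: one must confirm that the truly top-order contributions cancel by the reality of $V$ and $\alpha$, and then estimate the lower-order commutator terms (by Kato--Ponce/Moser-type bounds, or by direct Leibniz expansion since $s$ is an integer) so that every constant depends only on $\|V\|_{L^\infty(U;H^s)}$ and the uniform $H^1$ bound, hence not on $\bxi$. The global $H^1$ bound in the focusing case is the other delicate point, but it is handled cleanly by subcriticality and the Gagliardo--Nirenberg inequality.
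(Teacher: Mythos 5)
Your proposal is correct and matches the paper's proof in its first two stages: local well-posedness from the Duhamel formula and the algebra property of $H^s(\bT)$ (the paper closes this with a bootstrap argument rather than a contraction mapping, an immaterial difference), followed by mass and energy conservation, with the Gagliardo--Nirenberg plus Young absorption in the focusing case exactly as in the paper. Where you genuinely diverge is the propagation of $H^s$ regularity: you run a differential energy estimate, applying $\partial_x^s$, pairing with $\partial_x^s\psi$, and invoking Kato--Ponce/Moser commutator bounds, whereas the paper stays with the integral (Duhamel) formulation and applies its tame product lemma $\|fg\|_{H^s}\lesssim\|f\|_{H^s}\|g\|_{L^\infty}+\|f\|_{L^\infty}\|g\|_{H^s}$ directly under the time integral, then uses the integral form of Gr\"onwall. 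The two routes are equivalent in outcome and both hinge on the same structural point --- only one factor carries the $H^s$ norm, all others are controlled in $L^\infty$ via the uniform $H^1$ bound --- but the paper's version is slightly more elementary (it needs only the product lemma proved in its appendix, valid for any real $s\ge 1$, and avoids commutators entirely), while yours is the more standard PDE-textbook energy method. One small imprecision to fix in your sketch: for the cubic term, $\partial_x^s(|\psi|^2\psi)$ also produces the top-order contribution $\psi^2\,\partial_x^s\bar\psi$, and $\mathrm{Re}\,\langle -i\alpha\,\psi^2\,\partial_x^s\bar\psi,\partial_x^s\psi\rangle=\mathrm{Re}\bigl(-i\alpha\int_{\bT}\psi^2\,(\overline{\partial_x^s\psi})^2\,dx\bigr)$ does \emph{not} vanish by reality (unlike the $|\psi|^2\partial_x^s\psi$ piece); it is, however, harmlessly bounded by $|\alpha|\,\|\psi\|_{L^\infty}^2\|\partial_x^s\psi\|_{L^2}^2$, so your Gr\"onwall inequality still closes with constants depending only on $\|V\|_{L^\infty(U;H^s)}$ and the uniform $H^1$ bound, hence uniform in $\bxi$ as required.
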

\begin{proof}
Following the framework from \cite{ginibre1978class}, let us briefly go through the proof to check mainly the effect from the random potential. For simplicity, we shall omit the spatial variable $x$ in the functions.
 By the Duhamel formula,  (\ref{nls}) reads
\begin{equation}\label{prop1:eq0}
\psi(t,\bxi) = \fe^{it\partial_x^2/2}\psi_{in}-i\int_0^t\fe^{i(t-\rho)\partial_x^2/2}\left[
V(\bxi)\psi(\rho,\bxi)+\alpha|\psi(\rho,\bxi)|^2\psi(\rho,\bxi)\right]d\rho,\quad t\geq0.
\end{equation}
Note that under condition $s\geq1$, we have the algebraic property of the Sobolev space $H^s$ in the 1D \cite[Chapter 4]{Adams}. Therefore, by taking the $H^s$-norm on both sides of {\eqref{prop1:eq0}}, we get
$$
\|\psi(t,\bxi)\|_{H^s}\leq\|\psi_{in}\|_{H^s}+C\int_0^t\left[\|V(\bxi)\|_{H^s}
\|\psi(\rho,\bxi)\|_{H^s}
+\|\psi(\rho,\bxi)\|_{H^s}^3\right]ds,
$$
for some constant $C>0$ from the Sobolev embedding which depends only on $s$. Then, {a bootstrap-type argument \cite{tao2006nonlinear} will give the local well-posedness of (\ref{nls}) in $H^s(\bT)$ for each $\bxi$ (see Appendix \ref{appendix: useful tools} for details of the bootstrap-type argument).} Moreover, there exists a $T_0>0$ such that $\psi\in L^\infty((0,T_0) \times U;H^s(\bT))$ owning to the fact that $\psi_{in}$ is deterministic and the assumption $V\in L^\infty(U;H^s(\bT))$.

On the other hand, since $V$ is real-valued, the model (\ref{nls}) enjoys the mass conservation
\begin{align} \label{eq: mass conservation}
M(t):=\int_{\bT}|\psi(t,\bxi,x)|^2dx\equiv \int_{\bT}|\psi_{in}(x)|^2dx=M(0),\quad t\geq0,
\end{align}
where the mass is independent of $\bxi$. Also, we have  the energy conservation:
\begin{align} \label{eq: energy conservation}
E(t,\bxi):=&\int_{\bT}\left[\frac12|\partial_x\psi(t,\bxi,x)|^2+V(\bxi,x) |\psi(t,\bxi,x)|^2+\frac{\alpha}{2}|\psi(t,\bxi,x)|^4 \right]dx \nonumber \\
\equiv& \int_{\bT}\left[\frac12|\partial_x\psi_{in}(x)|^2+V(\bxi,x)|\psi_{in}(x)|^2
+\frac{\alpha}{2}|\psi_{in}(x)|^4
\right]dx=E(0,\bxi),\quad  t\geq0,\ \bxi \in U.
\end{align}
Note by the Sobolev embedding that $H^s(\bT)\subseteq L^\infty(\bT)$, {then the condition $V\in L^\infty(U;H^s(\bT))$ guarantees a uniform upper bound of the initial energy for all the $\bxi$}, i.e.,
\begin{align} \label{eq: bound of energy}
|E(0,\bxi)|\leq \int_{\bT}\left[\frac12|\partial_x\psi_{in}(x)|^2+\|V\|_{L^\infty(U \times \bT)}|\psi_{in}(x)|^2
+\frac{|\alpha|}{2}|\psi_{in}(x)|^4
\right]dx=:E_0<\infty,\quad \bxi \in U.
\end{align}
{In the defocusing case, i.e., $\alpha>0$, it can be seen from the definition of $E(t,\bxi)$ in \eqref{eq: energy conservation} that
\begin{align} \label{eq: lower bound of energy in the defocusing case}
E(t,\bxi) & \ge \int_{\bT}\left[ \frac12|\partial_x\psi(t,\bxi,x)|^2+V(\bxi,x) |\psi(t,\bxi,x)|^2 \right]dx \nonumber \\
& \ge \frac{1}{2} \|\partial_x\psi(t,\bxi)\|_{L^2}^2 - \|V\|_{L^\infty(U \times \bT)} \| \psi(t, \bxi) \|_{L^2}^2,
\end{align}
which together with the mass conservation \eqref{eq: mass conservation}, the energy conservation \eqref{eq: energy conservation} and the bound of $E(0, \bxi)$ in \eqref{eq: bound of energy} shows that
\begin{align}
\|\partial_x\psi(t,\bxi)\|_{L^2}^2 & \le 2 |E(t,\bxi)| + 2\|V\|_{L^\infty(U \times \bT)} \| \psi(t, \bxi) \|_{L^2}^2 \nonumber \\
& \leq 2E_0+2\|V\|_{L^\infty(U \times \bT)} \| \psi_{in} \|_{L^2}^2,\quad \forall\, t\geq0,\,\bxi\in U.
\end{align}
In the focusing case, i.e., $\alpha<0$, we derive by the Gagliardo-Nirenberg inequality $\|\psi\|_{L^4}^4\leq C\|\partial_x\psi\|_{L^2}\|\psi\|_{L^2}^3$ and the Young's inequality that
\begin{align}
\|\psi(t,\bxi)\|_{L^4}^4\leq C\|\partial_x\psi(t,\bxi)\|_{L^2}\|\psi(t,\bxi)\|_{L^2}^3
\leq\frac{1}{-2\alpha}\|\partial_x\psi(t,\bxi)\|_{L^2}^2+\frac{-\alpha C^2}{2}\|\psi(t,\bxi)\|_{L^2}^6,
\end{align}
which together with the definition of $E(t,\bxi)$ in \eqref{eq: energy conservation} shows that
\begin{align} \label{eq: lower bound of energy in the focusing case}
E(t,\bxi) \ge \frac{1}{4} \|\partial_x\psi(t,\bxi)\|_{L^2}^2 - \|V\|_{L^\infty(U \times \bT)} \| \psi(t, \bxi) \|_{L^2}^2 - \frac{\alpha^2 C^2}{4} \| \psi(t, \bxi) \|_{L^2}^6.
\end{align}
Then the inequality \eqref{eq: lower bound of energy in the focusing case} together with the mass conservation \eqref{eq: mass conservation}, the energy conservation \eqref{eq: energy conservation} and the bound of $E(0, \bxi)$ in \eqref{eq: bound of energy} shows that
\begin{align}
\|\partial_x\psi(t,\bxi)\|_{L^2}^2 & \le 4|E(t, \bxi)| + 4 \|V\|_{L^\infty(U \times \bT)} \| \psi(t, \bxi) \|_{L^2}^2 + \alpha^2 C^2 \| \psi(t, \bxi) \|_{L^2}^6 \nonumber \\
& \leq 4E_0+4\|V\|_{L^\infty(U \times \bT)} \| \psi_{in} \|_{L^2}^2 + \alpha^2C^2 \| \psi_{in} \|_{L^2}^6,\quad
\forall\, t\geq0,\,\bxi\in U.
\end{align}}
Thus, $\|\psi(t,\bxi)\|_{H^1}$ for any $t\geq0,\,\bxi\in U$ is always bounded from above by a uniform constant depending only on  ${\| \psi_{in} \|_{L^2}^2}$ , $E_0$ and $\|V\|_{L^\infty(U \times \bT)}$. Such generic upper bound for $\|\psi(t,\bxi)\|_{H^1}$ together with the local well-posedness leads to the global well-posedness of (\ref{nls}) in $H^1(\bT)$ for each $\bxi$, and $\psi\in L^\infty(\bR^+ \times U;H^1(\bT))\subseteq L^\infty(\bR^+ \times U \times \bT)$.
Then, (\ref{prop1:eq0}) by {the product lemma \cite{tao2006nonlinear} (see Appendix \ref{appendix: useful tools} for details of the product lemma)} gives
$$
\|\psi(t,\bxi)\|_{H^s}\leq\|\psi_{in}\|_{H^s}+C\left[\|V\|_{L^\infty(U;H^s)}
+\|\psi\|_{L^\infty(\bR^+ \times U \times \bT)}^2\right]\int_0^t\|\psi(\rho,\bxi)\|_{H^s}d\rho,
$$
and the last assertion of the lemma follows from the Gronwall inequality for any fixed $T>0$.
\end{proof}

\begin{remark}\label{remark1}
For the truncated NLS problem (\ref{nls trun}), note that the truncation does not change the regularity of the random potential, as stated in Assumption \ref{assump}. Therefore, by the same procedure in Lemma \ref{lm1}, we can have $\psi_m\in L^\infty((0,T) \times U_m;H^s(\bT))$ for any finite $T>0$ with  $\|\psi_m\|_{L^\infty((0,T) \times U_m;H^s)}\leq \|\psi\|_{L^\infty((0,T) \times U;H^s)}$.
\end{remark}

\subsection{Error of dimension truncation}

\begin{lemma}\label{truncation error}
Under Assumption \ref{assump},  the difference between the solution of (\ref{nls trun}) and the solution of the parameterized approximation (\ref{nls}) is given by
  $$\|\psi_m-\psi\|_{L^\infty((0,T) \times U;H^{1})}\leq C\|V_m-V\|_{L^\infty(U;H^{1})},$$
  with a constant $C>0$ depending on $\alpha$, $T$, the norm of $V$ in $L^\infty(U;H^{s}(\bT))$ and the norm of $\psi$ in $L^\infty((0,T) \times U;H^{s}(\bT))$.
\end{lemma}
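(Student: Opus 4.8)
The plan is to control the difference $e:=\psi_m-\psi$ by a Duhamel/Gronwall argument in $H^1(\bT)$, leaning entirely on the uniform-in-$(t,\bxi)$ bounds already secured in Lemma \ref{lm1} and Remark \ref{remark1}. First I would write the Duhamel formula \eqref{prop1:eq0} for both $\psi$ and $\psi_m$ (the latter with $V$ replaced by $V_m$) and subtract them. Since the two problems share the same deterministic initial data $\psi_{in}$, the free-evolution term cancels and one is left with
$$
e(t,\bxi) = -i\int_0^t \fe^{i(t-\rho)\partial_x^2/2}\left[\left(V_m\psi_m-V\psi\right)+\alpha\left(|\psi_m|^2\psi_m-|\psi|^2\psi\right)\right]d\rho.
$$

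The next step is to reorganize the two differences in the integrand so that each resulting term is proportional either to $e$ or to $V_m-V$. For the linear-potential term I split $V_m\psi_m-V\psi = V_m e + (V_m-V)\psi$, and for the cubic term I telescope, using $|\psi_m|^2-|\psi|^2=\psi_m\overline{e}+e\overline{\psi}$, to obtain
$$
|\psi_m|^2\psi_m-|\psi|^2\psi = |\psi_m|^2\,e + \psi\,\psi_m\,\overline{e} + |\psi|^2\, e,
$$
so that every nonlinear contribution carries exactly one factor of $e$ or $\overline{e}$. Because $H^1(\bT)$ is a Banach algebra in one space dimension \cite[Chapter 4]{Adams}, each product is bounded by the $H^1$-norms of its factors, and because $\fe^{it\partial_x^2/2}$ is an isometry on $H^1$, taking $H^1$-norms gives
$$
\|e(t,\bxi)\|_{H^1}\leq C_1\int_0^t\|V_m-V\|_{L^\infty(U;H^1)}\,d\rho + C_2\int_0^t\|e(\rho,\bxi)\|_{H^1}\,d\rho,
$$
where $C_1$ depends on $\|\psi\|_{L^\infty((0,T)\times U;H^1)}$, and $C_2$ depends on $\alpha$, on $\|V_m\|_{L^\infty(U;H^1)}$, and on the uniform $H^1$-bounds of $\psi$ and $\psi_m$. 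Applying Gronwall's inequality and then taking the supremum over $t\in(0,T)$ and $\bxi\in U$ yields $\|e\|_{L^\infty((0,T)\times U;H^1)}\leq C_1 T\,\fe^{C_2 T}\,\|V_m-V\|_{L^\infty(U;H^1)}$, which is the claimed estimate.

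The only point requiring genuine care is the \emph{uniformity} of the constants in $\bxi$ and their independence of $m$; the nonlinear term itself is routine once the algebra property and the telescoping above are in hand. Uniformity of $C_2$ follows because $\|\psi\|_{L^\infty((0,T)\times U;H^1)}$ and $\|\psi_m\|_{L^\infty((0,T)\times U;H^1)}$ are bounded by a common constant (Lemma \ref{lm1} and Remark \ref{remark1}), and because $\|V_m\|_{L^\infty(U;H^1)}\leq\|V_m\|_{L^\infty(U;H^s)}\leq\|V\|_{L^\infty(U;H^s)}$ by Assumption \ref{assump}. With these inputs the factor $C_1 T\,\fe^{C_2 T}$ depends only on $\alpha$, $T$, $\|V\|_{L^\infty(U;H^s)}$ and $\|\psi\|_{L^\infty((0,T)\times U;H^s)}$, exactly as asserted in the statement.
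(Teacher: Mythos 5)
Your proposal is correct and follows essentially the same route as the paper: subtract the two Duhamel formulas, split the potential and cubic differences so that each term carries a factor of $\psi_m-\psi$ or $V_m-V$, bound products via the algebra/product property of $H^1(\bT)$ using the uniform bounds from Lemma \ref{lm1} and Remark \ref{remark1}, and close with Gronwall. The only cosmetic differences are the grouping of the potential term ($V_m e+(V_m-V)\psi$ versus the paper's $V\delta\psi+(V_m-V)\psi_m$) and that you write out the cubic telescoping explicitly; neither affects the result or the stated dependence of the constant.
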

\begin{proof}
  Denoting $\delta\psi=\psi_m-\psi$ and taking the difference between  (\ref{nls trun}) and (\ref{nls}), we get
  \begin{equation*}
\left\{
\begin{aligned}
&i\partial_t\delta\psi=-\frac{1}{2}\partial_x^2\delta\psi+V
\delta\psi+(V_m-V)\psi_m+\alpha\left[|\psi_m|^2\psi_m-|\psi|^2\psi\right],\quad x\in \bT,\ t>0,\\
&\delta\psi(t=0)=0,\quad x\in \bT.
\end{aligned}
\right.
\end{equation*}
The Duhamel formula gives
\begin{align*} \delta\psi(t,\bxi)=-i\int_0^t\fe^{i(t-\rho)\partial_x^2/2}\Big[&V(\bxi)
\delta\psi(\rho,\bxi)+(V_m(\bxi)-V(\bxi))\psi_m(\rho,\bxi)\\
&+\alpha(|\psi_m(\rho,\bxi)|^2\psi_m(\rho,\bxi)-|\psi(\rho,\bxi)|^2\psi(\rho,\bxi))\Big]d\rho,\quad t\geq0,\end{align*}
which by the product lemma gives
\begin{align*}&\|\delta\psi(t,\bxi)\|_{H^{1}}\\
\leq &C\int_0^t\left[\|V\|_{L^\infty(U;H^s)}+\|\psi\|_{L^\infty((0,T) \times U;H^s)}^2
+\|\psi_m\|_{L^\infty((0,T) \times U_m;H^s)}^2\right]
\|\delta\psi(\rho,\bxi)\|_{H^{1}}d\rho\\
&+tC\|\psi_m\|_{L^\infty((0,T) \times U_m;H^s)}\|V_m-V\|_{L^\infty(U;H^{1})},\quad 0\leq t\leq T,
\end{align*}
with some $C>0$ depending on $\alpha$ only.  {Note that the function
$$
q(t) := tC\|\psi_m\|_{L^\infty((0,T) \times U_m;H^s)}\|V_m-V\|_{L^\infty(U;H^{1})}
$$
is non-decreasing in $t$ and the constant
$$
K := C \left[ \|V\|_{L^\infty(U;H^s)} + \|\psi\|_{L^\infty((0,T) \times U;H^s)}^2 + \|\psi_m\|_{L^\infty((0,T) \times U_m;H^s)}^2 \right] \ge 0.
$$
Then the Gronwall inequality leads to
\begin{align*}
\|\delta\psi(t,\bxi)\|_{H^{1}} \le & q(t) + K \exp(K t) \int_0^t \exp(- K \rho) q(\rho) d \rho \\
\le & q(t) \left( 1 + K \exp(K t) \int_0^t \exp(- K \rho) d \rho \right) = q(t) \exp(K t) \\
= & tC\|\psi_m\|_{L^\infty((0,T) \times U_m;H^s)}\|V_m-V\|_{L^\infty(U;H^{1})}
\\
&\times\exp{\left\{tC\left[\|V\|_{L^\infty(U;H^s)}+\|\psi\|_{L^\infty((0,T) \times U;H^s)}^2
+\|\psi_m\|_{L^\infty((0,T) \times U_m;H^s)}^2\right]\right\}},
\end{align*}
where we use the fact that $q(t)$ is non-decreasing in $t$ in the second inequality, and the uniform right-hand-side in $\bxi$ gives the assertion.}
\end{proof}
Under the condition $\|V_m-V\|_{L^\infty(U;H^{1})}\leq Cm^{-\chi}$ in Assumption \ref{assump2}, Lemma \ref{truncation error} tells that  $\psi_m\to\psi$ in $L^\infty((0,T) \times U;H^{1}(\bT))$ as $m\to\infty$.
Recall that with the given linear functional $G$, we are interested in computing $ \mathbb{E}[\mathcal{G}]$ which after the truncation is approximated by
\begin{equation*}
\mathbb{E}_m[\mathcal{G}^m]  = \int_{U_m} G(|\psi_m(t,\xi,\cdot)|^2) d \xi.
\end{equation*}
Its error is given as follows.

\begin{lemma}\label{lm:trun G}
Under Assumption \ref{assump} and Assumption \ref{assump2}, with $\mathcal{G}$ {and $\mathcal{G}^m$} defined in (\ref{G_k def}), we have the estimate
$$| \mathbb{E}[\mathcal{G}] - \mathbb{E}_m[\mathcal{G}^m]|\leq  Cm^{-\chi}, $$
{where $C$ is independent of $m$.}
\end{lemma}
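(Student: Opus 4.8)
The plan is to reduce the statement to a pointwise-in-$\bxi$ estimate on $|\mathcal{G}(t,\bxi)-\mathcal{G}^m(t,\bxi)|$ and then integrate over the parameter domain, which carries unit measure. Since $G\in (H^{1}(\bT))'$ is linear and bounded, I would first exploit linearity to write
$$
|G(|\psi|^2)-G(|\psi_m|^2)|=|G(|\psi|^2-|\psi_m|^2)|\leq \|G\|_{(H^{1})'}\,\big\||\psi|^2-|\psi_m|^2\big\|_{H^{1}},
$$
so that the entire problem reduces to estimating $\||\psi|^2-|\psi_m|^2\|_{H^{1}}$ uniformly in $t\in(0,T)$ and $\bxi\in U$.

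For this estimate I would set $\delta\psi=\psi_m-\psi$ and use the algebraic identity
$$
|\psi_m|^2-|\psi|^2=\bar\psi\,\delta\psi+\psi\,\overline{\delta\psi}+|\delta\psi|^2 .
$$
Since $H^{1}(\bT)$ is a Banach algebra in one dimension (the same algebra property invoked in Lemma \ref{lm1}), each product term is controlled in the $H^1$-norm, giving
$$
\big\||\psi_m|^2-|\psi|^2\big\|_{H^{1}}\leq C\big(\|\psi\|_{H^{1}}+\|\psi_m\|_{H^{1}}\big)\|\delta\psi\|_{H^{1}}+C\|\delta\psi\|_{H^{1}}^2 .
$$
By Lemma \ref{lm1} and Remark \ref{remark1} the factors $\|\psi\|_{H^{1}}$ and $\|\psi_m\|_{H^{1}}$ are bounded uniformly in $(t,\bxi)$, while Lemma \ref{truncation error} combined with Assumption \ref{assump2} yields $\|\delta\psi\|_{L^\infty((0,T)\times U;H^{1})}\leq C\|V_m-V\|_{L^\infty(U;H^{1})}\leq Cm^{-\chi}$. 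Absorbing the quadratic term (bounded by $Cm^{-2\chi}\leq Cm^{-\chi}$ for $m\geq 1$), I obtain the uniform pointwise bound $|\mathcal{G}-\mathcal{G}^m|\leq Cm^{-\chi}$ for all $\bxi\in U$ and $t\in(0,T)$.

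It then remains to reconcile the domains of integration, since $\mathbb{E}[\mathcal{G}]$ is defined by the infinite-dimensional limit in \eqref{expect def} while $\mathbb{E}_m[\mathcal{G}^m]$ is an integral over $U_m$. Because $\mathcal{G}^m$ depends on $\bxi$ only through its first $m$ coordinates, $\int_{U_m}\mathcal{G}^m\,d\xi=\int_U \mathcal{G}^m\,d\bxi$, the tail coordinates integrating against unit-mass factors. Moreover, Lemma \ref{truncation error} gives $\psi_{m'}\to\psi$ in $L^\infty((0,T)\times U;H^{1})$ as $m'\to\infty$, hence $G(|\psi_{m'}|^2)\to G(|\psi|^2)$ uniformly, and the defining limit identifies $\mathbb{E}[\mathcal{G}]=\int_U G(|\psi|^2)\,d\bxi$ by uniform convergence on the probability space $U$. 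Combining these,
$$
|\mathbb{E}[\mathcal{G}]-\mathbb{E}_m[\mathcal{G}^m]|=\Big|\int_U(\mathcal{G}-\mathcal{G}^m)\,d\bxi\Big|\leq \int_U|\mathcal{G}-\mathcal{G}^m|\,d\bxi\leq Cm^{-\chi},
$$
using $\int_U d\bxi=1$.

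The genuinely new step is the nonlinear $H^1$ estimate of $\||\psi|^2-|\psi_m|^2\|_{H^1}$, which controls the quadratic-in-solution observable by the linear-in-solution truncation error already furnished by Lemma \ref{truncation error}; I expect the $H^1$ algebra bookkeeping there, rather than the measure-theoretic matching of the two integration domains, to be where the argument must be carried out with care, since the latter follows directly from the uniform-in-$\bxi$ convergence.
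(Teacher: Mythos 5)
Your proposal is correct and follows essentially the same route as the paper: both reduce the claim to the pointwise bound $|\mathcal{G}-\mathcal{G}^m|\leq \|G\|_{(H^{1})'}\,\||\psi|^2-|\psi_m|^2\|_{H^{1}}$, control the quadratic difference via the $H^1$ algebra property together with the uniform bounds from Lemma \ref{lm1} and Remark \ref{remark1}, and then invoke Lemma \ref{truncation error} with Assumption \ref{assump2}. The only cosmetic difference is that the paper compares $\mathcal{G}^p$ with $\mathcal{G}^m$ for finite $p$ and sends $p\to\infty$ at the end (so it only ever integrates finite-dimensional functions), whereas you first identify $\mathbb{E}[\mathcal{G}]=\int_U\mathcal{G}\,d\bxi$ via uniform convergence and then estimate directly against $\psi$; both are valid.
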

\begin{proof}
By the definition (\ref{expect def}),
$\displaystyle \mathbb{E}[\mathcal{G}] - \mathbb{E}_m[\mathcal{G}^m]=\lim_{p\to\infty}\mathbb{E}_p[\mathcal{G}^p] - \mathbb{E}_m[\mathcal{G}^m]$. Note that when $p\geq m$, $\mathbb{E}_p[\mathcal{G}^m]=\mathbb{E}_m[\mathcal{G}^m]$, so we have
\begin{align}\label{lm:trun eq0}
 |\mathbb{E}[\mathcal{G}] - \mathbb{E}_m[\mathcal{G}^m]|=\lim_{p\to\infty}|\mathbb{E}_p[\mathcal{G}^p-\mathcal{G}^m]|.
\end{align}
By the assumptions and Lemma \ref{truncation error}, we have for all $t\in[0,T]$,
\begin{align}
|\mathcal{G}^p(t,\bxi_{\{1:p\}})-\mathcal{G}^m(t,\xi)|\leq
&\|G\|_{H^{1}(\bT)'}\||\psi_p(t,\bxi_{\{1:p\}},\cdot)|^2
-|\psi_m(t,\xi,\cdot)|^2\|_{H^{1}}.\label{lm2 eq2}
\end{align}
With the estimates in Lemma \ref{lm1} and Remark \ref{remark1}, we get by the triangle inequality and the algebraic property of $H^{1}$,
\begin{equation*}\||\psi_p(t,\bxi_{\{1:p\}},\cdot)|^2
-|\psi_m(t,\xi,\cdot)|^2\|_{H^{1}}\leq C\|\psi_p(t,\bxi_{\{1:p\}},\cdot)
-\psi_m(t,\xi,\cdot)\|_{H^{1}}.\end{equation*}
Plugging it into (\ref{lm2 eq2}) and then using triangle inequality and Lemma \ref{truncation error}, we find
\begin{align*}
|\mathcal{G}^p(t,\bxi_{\{1:p\}})-\mathcal{G}^m(t,\xi)|\leq&C\|\psi_p(t,\bxi_{\{1:p\}},\cdot)
-\psi(t,\bxi,\cdot)\|_{H^{1}}+C\| {\psi(t,\bxi,\cdot)}
-\psi_m(t,\xi,\cdot) \|_{H^{1}}
\\ \leq&C(\|V_p-V\|_{L^\infty(U;H^{1})}+\|V_m-V\|_{L^\infty(U;H^{1})})\leq C(m^{-\chi}+p^{-\chi}).
\end{align*}
Note here $C>0$ is uniform constant in $\bxi,p,m$. Then, we find
$$|\mathbb{E}_p[\mathcal{G}^p-\mathcal{G}^m]|\leq\mathbb{E}_p[|\mathcal{G}^p-\mathcal{G}^m|]\leq C(m^{-\chi}+p^{-\chi}),$$
and so by (\ref{lm:trun eq0}) we have the assertion of the lemma.
\end{proof}

\subsection{Regularity in parametric space}
\label{subsec: parametric regularity}

As one key to determine the convergence  of the QMC method, we need the regularity of the solution in the parametric space. We now start to estimate the mixed first derivatives of $\psi_m$ with respect to $\xi_j$. In the following, we shall denote $$\Gamma:=\{\nu=(\nu_1,\ldots,\nu_m)\,|\,
\nu_j=0\, \mbox{ or }\, 1,\, j=1,\ldots,m \}$$ with $|\nu|=\sum_{j=1}^{m}\nu_j$, and the mixed first derivative reads
$$\partial^{\nu}\psi_m=\partial_{\xi_1}^{\nu_1}
\ldots\partial_{\xi_m}^{\nu_m}\psi_m,\quad \nu\in\Gamma. $$

\begin{lemma}\label{lemma:regularitywrtrvs} Under Assumption \ref{assump},
for any fixed  $T>0,\ m>0$ and  multi-index $0\neq\nu\in \Gamma$, we have
\begin{equation}\label{lm3:res}
\|\partial^{\nu} \psi_m(t, \xi, \cdot)\|_{H^{1}}\leq 3^{(|\nu|+2)(|\nu|-1)/2}C_T^{2|\nu|-1}
\displaystyle\prod_{j = 1}^m \left(\sqrt{\lambda_j}\|v_j\|_{H^{1}}\right)^{\nu_j}, \  \forall\,\xi\in U_m,\ t\in[0,T],
\end{equation}
where $C_T=\max\{C_T^0,1\}$ with
\begin{align*}C_T^0:=&2\max\{1,|\alpha|\}C_0T\max\left\{1,\|\psi\|_{L^\infty((0,T)\times U;H^s)}\right\}\\
&\times
\exp\left(T \max\left\{1,C_0\left[\|V\|_{L^\infty(U;H^s)}+3|\alpha|\|\psi\|_{L^\infty((0,T) \times U;H^s)}^2\right]\right\}\right)
\end{align*}
for some generic constant $C_0>0$ independent of $\nu$ {and} $m$.
\end{lemma}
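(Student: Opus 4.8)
The plan is to differentiate the truncated NLS \eqref{nls trun} with respect to the parametric variables and run an induction on $n=|\nu|$, reducing each order to a Duhamel representation combined with the $1$D $H^1$-algebra property and a Gronwall argument, exactly in the spirit of Lemma \ref{lm1} and Lemma \ref{truncation error}.

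\textbf{Differentiated equation.} Set $w:=\partial^\nu\psi_m$. Since $\nu\in\Gamma$ has entries in $\{0,1\}$ and $V_m$ is affine in $\xi$ with $\partial_{\xi_j}V_m=\sqrt{\lambda_j}v_j$, the Leibniz rule applied to $V_m\psi_m$ retains only the orders $0$ and $e_j$ (with $e_j$ the $j$-th unit multi-index), while the cubic term $|\psi_m|^2\psi_m=\psi_m^2\overline{\psi_m}$ expands over ordered splittings $\mu_1+\mu_2+\mu_3=\nu$. Isolating the three principal terms in which a single factor carries the whole index $\nu$ gives
\begin{equation*}
i\partial_t w=-\tfrac12\partial_x^2 w+V_m w+\alpha\big(2|\psi_m|^2 w+\psi_m^2\overline{w}\big)+F_\nu,
\end{equation*}
with the lower-order source
\begin{equation*}
F_\nu=\sum_{j:\,\nu_j=1}\sqrt{\lambda_j}\,v_j\,\partial^{\nu-e_j}\psi_m+\alpha\!\!\sum_{\substack{\mu_1+\mu_2+\mu_3=\nu\\ \mu_i\neq\nu\ \forall i}}\!\!(\partial^{\mu_1}\psi_m)(\partial^{\mu_2}\psi_m)(\overline{\partial^{\mu_3}\psi_m}),
\end{equation*}
in which every multi-index is strictly smaller than $\nu$; moreover $w(0)=0$ since $\psi_m(0)=\psi_{in}$ is independent of $\xi$.

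\textbf{Duhamel and Gronwall.} Applying Duhamel and taking $H^1$-norms, I would use the $H^1$-unitarity of $\fe^{it\partial_x^2/2}$, the algebra bound $\|fg\|_{H^1}\le C_0\|f\|_{H^1}\|g\|_{H^1}$, and $\|\overline{w}\|_{H^1}=\|w\|_{H^1}$ to control the principal linear terms by $C_0\big(\|V_m\|_{H^1}+3|\alpha|\|\psi_m\|_{H^1}^2\big)\|w\|_{H^1}$. Using the uniform $H^s$-bound of $\psi_m$ from Lemma \ref{lm1} and Remark \ref{remark1}, the Gronwall argument of Lemma \ref{truncation error} yields
\begin{equation*}
\|w(t)\|_{H^1}\le C_0\,T\,\fe^{KT}\sup_{[0,T]}\|F_\nu\|_{H^1},\qquad K:=C_0\big(\|V\|_{L^\infty(U;H^s)}+3|\alpha|\|\psi\|_{L^\infty((0,T)\times U;H^s)}^2\big),
\end{equation*}
and the definition of $C_T^0$ is precisely arranged so that $C_0T\fe^{KT}\le C_T/(2\max\{1,|\alpha|\}\max\{1,\|\psi\|_{L^\infty((0,T)\times U;H^s)}\})$.

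\textbf{Induction.} I would induct on $n=|\nu|$. For $n=1$ the nonlinear sum is empty and $F_{e_k}=\sqrt{\lambda_k}v_k\psi_m$, so the previous step gives \eqref{lm3:res} directly with the slack built into $C_T^0$. In the inductive step each factor of $F_\nu$ is estimated by the induction hypothesis, and the potential weights telescope \emph{exactly} to $\prod_j(\sqrt{\lambda_j}\|v_j\|_{H^1})^{\nu_j}$, because $\sqrt{\lambda_j}\|v_j\|_{H^1}\cdot\prod_l(\cdots)^{(\nu-e_j)_l}=\prod_l(\cdots)^{\nu_l}$ for the linear part and $\prod_i\prod_l(\cdots)^{(\mu_i)_l}=\prod_l(\cdots)^{\nu_l}$ for the nonlinear part. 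What remains is to reproduce the constants $3^{(n+2)(n-1)/2}$ and $C_T^{2n-1}$. Writing $f(n):=(n+2)(n-1)/2$, the two identities $f(n)=f(n-1)+n$ and $f(a)+f(b)=f(a+b)-ab-1$ drive everything: with the convention $f(0)=0$ they force $f(\mu_1)+f(\mu_2)+f(\mu_3)\le f(n)-n$ for every admissible splitting, so each of the at most $3^n$ nonlinear terms carries at most $3^{\,f(n)-n}$ and their sum is $\le 3^{f(n)}$, while the linear part contributes only $n\,3^{f(n-1)}=n\,3^{\,f(n)-n}$, which is of strictly lower order. For the $C_T$-powers, the $k=2$ splittings produce $C_T^{2n-2}$ and the outer Duhamel factor supplies the missing $C_T$, giving $C_T^{2n-1}$; all remaining terms carry fewer powers and are absorbed since $C_T\ge1$, and the residual analytic constants ($C_0$, $|\alpha|$, the $\|\psi\|$-bounds) are swallowed by the factor $2$ and the maxima in $C_T^0$.

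The main obstacle is the combinatorial closure of Step \textbf{Induction}: verifying that the quadratic exponent $(|\nu|+2)(|\nu|-1)/2$ of $3$ and the exponent $2|\nu|-1$ of $C_T$ are stable under the recursion. The superlinear growth of the power of $3$ is forced by the nonlinearity, since each additional differentiation order spawns a full Leibniz tree of cubic interactions; the superadditivity defect $ab+1$ in $f(a)+f(b)=f(a+b)-ab-1$ is exactly what allows the $\le 3^{|\nu|}$ proliferating terms to be reabsorbed into one extra power of $3$ at the next order. The secondary delicate point is making the universal analytic constants fit inside the single constant $C_T$ with the stated slack, but this is routine once the two identities above are established.
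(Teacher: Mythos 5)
Your proposal follows essentially the same route as the paper's proof: differentiate the equation, isolate the principal terms $V_m\partial^{\nu}\psi_m+\alpha(2|\psi_m|^2\partial^{\nu}\psi_m+\psi_m^2\partial^{\nu}\overline{\psi_m})$ from the lower-order Leibniz source, apply Duhamel and Gronwall with the same constant $K$, and induct on $|\nu|$ using the identities $f(|\nu|)=f(|\nu|-1)+|\nu|$ and the superadditivity defect of $f$ to absorb the at most $3^{|\nu|}$ cubic interaction terms, with $\tilde{C}T\le C_T/2$ supplying the final power of $C_T$. The combinatorial closure you flag as the main obstacle is verified in the paper exactly as you outline it, so no changes are needed.
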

\begin{proof}
We prove (\ref{lm3:res}) by an induction on $|\nu|$.
  When $|\nu|=1$, we differentiate (\ref{nls}) with respect to $\xi_j$. By denoting
  $\partial_j\psi_m=\partial_{\xi_j}\psi_m$ and $\partial_jV_m=\partial_{\xi_j}V_m$ for short, we get for $j=1,\ldots,m$,
  $$i\partial_t\partial_j\psi_m=-\frac12\partial_x^2\partial_j\psi_m+\partial_jV_m\psi_m+
  V_m\partial_j\psi_m+\alpha\left(2|\psi_m|^2\partial_j\psi_m+\psi_m^2\overline{\partial_j\psi_m}
  \right).$$
Omitting the spatial variable for brevity, the Duhamel formula of the above shows
\begin{align*}
\partial_j\psi_m(t, \xi) = &\fe^{it\partial_x^2/2}\partial_j\psi_{in}
- i\int_0^t\fe^{i(t-\rho)\partial_x^2/2}\Big[\partial_jV_m(\xi)\psi_m(\rho,\xi)+
V_m(\xi)\partial_j\psi_m(\rho,\xi)\\
&+\alpha\left(2|\psi_m(\rho,\xi)|^2\partial_j\psi_m(\rho,\xi)+\psi_m(\rho,\xi)^2
\overline{\partial_j\psi_m}(\rho,\xi)
\right)\Big]d\rho.
\end{align*}
Since the initial function is deterministic, so $\partial_j\psi_{in}=0$. Then we find
\begin{align*}
\|\partial_j\psi_m(t,\xi)\|_{H^{1}}\leq C_0\int_0^t\Big[&\|\partial_jV_m(\xi)\|_{H^{1}}\|\psi_m(\rho,\xi)\|_{H^{1}}
+\|V_m(\xi)\|_{H^{1}}\|\partial_j\psi_m(\rho,\xi)\|_{H^{1}}\\
&+3|\alpha|\|\psi_m(\rho,\xi)\|_{H^{1}}^2\|\partial_j\psi_m(\rho,\xi)\|_{H^{1}}\Big]d\rho,
\end{align*}
with a generic constant $C_0>0$ coming from the Sobolev embedding.
By the fact $\partial_jV_m=\sqrt{\lambda_j}v_j$, and the boundedness
$$\|\psi_m(t,\xi)\|_{H^{1}}\leq
\|\psi_m\|_{L^\infty((0,T) \times U_m;H^{1})}\leq\|\psi\|_{L^\infty((0,T) \times U;H^{s})} ,\quad t\in[0,T],$$ as stated in Remark \ref{remark1}, we further find for $t\in[0,T]$,
\begin{align*}
\|\partial_j\psi_m(t,\xi)\|_{H^{1}}\leq &C_0t\sqrt{\lambda_j}\|v_j\|_{H^{1}}\|\psi\|_{L^\infty((0,T) \times U;H^s)}\\
&+ C_0\left[\|V\|_{L^\infty(U;H^s)}+3|\alpha|\|\psi\|_{L^\infty((0,T) \times U;H^s)}^2\right]\int_0^t
\|\partial_j\psi_m(\rho,\xi)\|_{H^{1}}d\rho.
\end{align*}
By Gronwall's inequality, we get for $t\in[0,T],\ \xi \in U_m$,
\begin{align*}
 &\|\partial_j\psi_m(t,\xi)\|_{H^{1}}\\
 \leq &\exp\left(tC_0\left[\|V\|_{L^\infty(U;H^s)}+3|\alpha|\|\psi\|_{L^\infty((0,T) \times U;H^s)}^2\right]\right)C_0t\|\psi\|_{L^\infty((0,T) \times U;H^s)}
\sqrt{\lambda_j}\|v_j\|_{H^{1}},
\end{align*}
which verifies (\ref{lm3:res}) for $|\nu|=1$.

When $|\nu|\geq2$, by the Leibniz rule we have
\begin{align*}
i\partial_t\partial^{\nu}\psi_m=&-\frac{\partial_x^2}{2}\partial^{\nu}\psi_m
+\sum_{\mu\preceq\nu}\binom{{\nu}}{\mu}\partial^{{\nu}-{\mu}}V_m\partial^{\mu}\psi_m
+\alpha\sum_{{\mu}\preceq{\nu}}\binom{{\nu}}{{\mu}}
\partial^{{\nu}-{\mu}}|\psi_m|^2\partial^{\mu}\psi_m\\
=&-\frac{\partial_x^2}{2}\partial^{\nu}\psi_m+V_m\partial^{\nu}\psi_m
+\alpha\left(2|\psi_m|^2\partial^\nu\psi_m+\psi_m^2\partial^\nu\overline{\psi_m}\right)
+\sum_{|{\nu}-{\mu}|=1,\mu\prec\nu}\partial^{{\nu}-{\mu}}V_m\partial^{\mu}\psi_m\\
&+\alpha\sum_{0\neq{\mu}\prec{\nu}}\sum_{\eta\preceq\nu-\mu}
\partial^{{\nu}-{\mu}-\eta}\psi_m\partial^{\mu}\psi_m\partial^{\eta}\overline{\psi_m}.
\end{align*}
Here $\binom{\nu}{\mu}=\prod_{j=1}^{m}\binom{\nu_j}{\mu_j}$.
Similarly as before, we can deduce
\begin{align*}
&\|\partial^{\nu}\psi_m(t,\xi)\|_{H^{1}}\\
 \leq &C_0\int_0^t\left[\|V\|_{L^\infty(U;H^s)}+  3|\alpha|
 \|\psi\|_{L^\infty((0,T) \times U;H^s)}^2\right]\|\partial^{\nu}\psi_m(\rho,\xi)\|_{H^{1}}d\rho\\
 &+C_0\int_0^t \sum_{|{\nu}-{\mu}|=1,\mu\prec\nu}
 \|\partial^{{\nu}-{\mu}}V_m\|_{H^{1}}\|\partial^{\mu}\psi_m(\rho,\xi)\|_{H^{1}}d\rho\\
 &+C_0|\alpha|\int_0^t \sum_{0\neq{\mu}\prec{\nu}}\sum_{\eta\preceq\nu-\mu}
 \|\partial^{{\nu}-{\mu}-\eta}\psi_m(\rho,\xi)\|_{H^{1}}\|\partial^{\mu}\psi_m(\rho,\xi)\|_{H^{1}}
 \|\partial^{\eta}\psi_m(\rho,\xi)\|_{H^{1}}d\rho.
\end{align*}
By the Gronwall inequality, we have
\begin{align}
\|\partial^{\nu}\psi_m(t,\xi)\|_{H^{1}}
 \leq & \tilde{C}\bigg[\sum_{|{\nu}-{\mu}|=1,\mu\prec\nu}
 \|\partial^{{\nu}-{\mu}}V_m\|_{H^{1}}\int_0^T\|\partial^{\mu}\psi_m(\rho,\xi)\|_{H^{1}}d\rho\label{lm3:eq0}\\
 &+\sum_{0\neq{\mu}\prec{\nu}}\sum_{\eta\preceq\nu-\mu}\int_0^T
 \|\partial^{{\nu}-{\mu}-\eta}\psi_m(\rho,\xi)\|_{H^{1}}\|\partial^{\mu}\psi_m(\rho,\xi)\|_{H^{1}}
 \|\partial^{\eta}\psi_m(\rho,\xi)\|_{H^{1}}d\rho\bigg],\nonumber
\end{align}
with $$\tilde{C}=\max\{1,|\alpha|\}
C_0\exp\left(T\max\left\{1,C_0\left[\|V\|_{L^\infty(U;H^s)}+3|\alpha|\|\psi\|_{L^\infty((0,T) \times U;H^s)}^2\right]\right\}\right).$$
For the first summation term above, noting that $|\mu|=|\nu|-1$ and using an induction argument with the assumption (\ref{lm3:res}), we have
\begin{align}\label{lm3: eq1}
\sum_{|{\nu}-{\mu}|=1,\mu\prec\nu}
 \|\partial^{{\nu}-{\mu}}V_m\|_{H^{1}}\|\partial^{\mu}\psi_m(\rho,\xi)\|_{H^{1}}\leq |\nu|
 3^{(|\nu|+1)(|\nu|-2)/2}C_T^{2|\nu|-3}
\displaystyle\prod_{j = 1}^m \left(\sqrt{\lambda_j}\|v_j\|_{H^{1}}\right)^{\nu_j}.
 \end{align}
On the other hand, for the double summation part in (\ref{lm3:eq0}), by the induction assumption (\ref{lm3:res}) we find
\begin{align*}
 &\sum_{0\neq{\mu}\prec{\nu}}\sum_{\eta\preceq\nu-\mu}
 \|\partial^{{\nu}-{\mu}-\eta}\psi_m(\rho,\xi)\|_{H^{1}}\|\partial^{\mu}\psi_m(\rho,\xi)\|_{H^{1}}
 \|\partial^{\eta}\psi_m(\rho,\xi)\|_{H^{1}}\\
 \leq& \sum_{0\neq{\mu}\prec{\nu}}\sum_{\eta\preceq\nu-\mu}C_T^{2|\nu|-2}\displaystyle\prod_{j = 1}^m \left(\sqrt{\lambda_j}\|v_j\|_{H^{1}}\right)^{\nu_j}
 3^{\frac{|\nu|^2+|\nu|-6}{2}+|\mu|^2+|\eta|^2-|\nu||\mu|-|\nu||\eta|-|\mu||\eta|},
\end{align*}
and it is direct to check here that $$\frac{|\nu|^2+|\nu|-6}{2}+|\mu|^2+|\eta|^2-|\nu||\mu|-|\nu||\eta|-|\mu||\eta|
\leq \frac{|\nu|^2-|\nu|-2}{2}.$$ So by
noting that the double summation contains $3^{|\nu|}-3$ terms in total, we further get
 \begin{align}
 &\sum_{0\neq{\mu}\prec{\nu}}\sum_{\eta\prec\nu-\mu}
 \|\partial^{{\nu}-{\mu}-\eta}\psi_m(\rho,\xi)\|_{H^{1}}\|\partial^{\mu}\psi_m(\rho,\xi)\|_{H^{1}}
 \|\partial^{\eta}\psi_m(\rho,\xi)\|_{H^{1}}\nonumber\\
\leq& 3^{|\nu|}C_T^{2|\nu|-2}\displaystyle\prod_{j = 1}^m \left(\sqrt{\lambda_j}\|v_j\|_{H^{1}}\right)^{\nu_j}
 3^{(|\nu|+1)(|\nu|-2)/2}\label{lm3:eq2}.
\end{align}
 By plugging (\ref{lm3:eq2}) and (\ref{lm3: eq1}) into (\ref{lm3:eq0}), we obtain
 \begin{align*}
\|\partial^{\nu}\psi_m(t,\xi)\|_{H^{1}}\leq &
\tilde{C}TC_T^{2|\nu|-2}3^{(|\nu|+1)(|\nu|-2)/2}
 \left(|\nu|+3^{|\nu|}\right)
 \displaystyle\prod_{j = 1}^m \left(\sqrt{\lambda_j}\|v_j\|_{H^{1}}\right)^{\nu_j}.
\end{align*}
Further noting that $\tilde{C}T\leq C_T^0/2$ and $|\nu|<3^{|\nu|}$, we find
 \begin{align*}
\|\partial^{\nu}\psi_m(t,\xi)\|_{H^{1}}
 \leq &\frac{1}{2}C_T^{2|\nu|-1}3^{(|\nu|+1)(|\nu|-2)/2}
 \left(|\nu|+3^{|\nu|}\right)
 \displaystyle\prod_{j = 1}^m \left(\sqrt{\lambda_j}\|v_j\|_{H^{1}}\right)^{\nu_j}\\
 \leq& C_T^{2|\nu|-1}3^{(|\nu|+2)(|\nu|-1)/2}
 \displaystyle\prod_{j = 1}^m \left(\sqrt{\lambda_j}\|v_j\|_{H^{1}}\right)^{\nu_j}.
\end{align*}

\end{proof}

%



With the established regularity of the solution in the parametric space, we can now estimate the mixed derivative of the physical observable $\mathcal{G}^m(t,\xi)=G(|\psi_m(t,\xi,\cdot)|^2)$ with respect to $\xi$.

\begin{lemma} \label{corF}
Under Assumption \ref{assump}, for any multi-index ${\nu} \in \Gamma$ with the constant $
C_\nu:=2^{|\nu|}C_T^{2|\nu|-1}3^{(|\nu|+2)(|\nu|-1)/2}$,
we have
\begin{equation}
\vert \partial^{\nu} \mathcal{G}^m (t,\xi)  \vert \leq C_0C_\nu \prod_{j = 1}^m \left(\sqrt{\lambda_j}\|v_j\|_{H^{1}}\right)^{\nu_j}  \Vert G \Vert_{H^{1}(\bT)'}, \quad \forall\,\xi \in U_m,\ t\in[0,T].
\label{MixedDerivativeMainEstmate}
\end{equation}
\end{lemma}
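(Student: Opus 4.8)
The plan is to exploit the linearity of the functional $G$ so as to commute the parametric derivative $\partial^\nu$ past $G$, reducing the claim to an $H^1(\bT)$-bound on $\partial^\nu|\psi_m|^2$. Lemma \ref{lemma:regularitywrtrvs} guarantees that the mixed derivatives $\partial^\mu\psi_m$ exist and are bounded in $H^1(\bT)$ for every $\mu\preceq\nu$, so $\bxi\mapsto|\psi_m(t,\xi,\cdot)|^2$ is correspondingly differentiable with values in $H^1(\bT)$ and $\partial^\nu\mathcal{G}^m(t,\xi)=G\big(\partial^\nu|\psi_m(t,\xi,\cdot)|^2\big)$. The duality bound $|G(f)|\leq\|G\|_{H^1(\bT)'}\|f\|_{H^1}$ then yields
\begin{equation*}
|\partial^\nu\mathcal{G}^m(t,\xi)|\leq\|G\|_{H^1(\bT)'}\,\big\|\partial^\nu|\psi_m(t,\xi,\cdot)|^2\big\|_{H^1},
\end{equation*}
so the whole estimate reduces to controlling the $H^1$-norm of the mixed derivative of $|\psi_m|^2=\psi_m\overline{\psi_m}$. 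The degenerate case $\nu=0$ is immediate by bounding $\mathcal{G}^m$ directly, so I would focus on $\nu\neq0$.

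For that estimate I would write $|\psi_m|^2=\psi_m\overline{\psi_m}$ and apply the Leibniz rule; since $\nu\in\Gamma$ has entries in $\{0,1\}$, all binomial coefficients $\binom{\nu}{\mu}$ equal $1$, giving $\partial^\nu|\psi_m|^2=\sum_{\mu\preceq\nu}\partial^\mu\psi_m\,\overline{\partial^{\nu-\mu}\psi_m}$. Using that $H^1(\bT)$ is a Banach algebra in one dimension (the same algebra property invoked in Lemma \ref{lm1}, with generic constant $C_0$) together with $\|\overline{g}\|_{H^1}=\|g\|_{H^1}$, I bound each summand by $C_0\|\partial^\mu\psi_m\|_{H^1}\|\partial^{\nu-\mu}\psi_m\|_{H^1}$ and invoke Lemma \ref{lemma:regularitywrtrvs} on each factor. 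The crucial algebraic observation is that the potential factors multiply cleanly: since $\mu_j+(\nu-\mu)_j=\nu_j$, for every $\mu\preceq\nu$
\begin{equation*}
\prod_{j=1}^m\big(\sqrt{\lambda_j}\|v_j\|_{H^1}\big)^{\mu_j}\prod_{j=1}^m\big(\sqrt{\lambda_j}\|v_j\|_{H^1}\big)^{(\nu-\mu)_j}=\prod_{j=1}^m\big(\sqrt{\lambda_j}\|v_j\|_{H^1}\big)^{\nu_j},
\end{equation*}
so this product factors out of the entire sum and matches the right-hand side of \eqref{MixedDerivativeMainEstmate}.

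The remaining, and main, work is the bookkeeping of the numerical constants, which I expect to be the only delicate point. I would split the $2^{|\nu|}$ terms into the two \emph{boundary} terms ($\mu=0$ and $\mu=\nu$), where one factor is $\psi_m$ itself, and the \emph{interior} terms ($0\neq\mu\prec\nu$). For an interior term, writing $a=|\mu|$ and $b=|\nu-\mu|$ with $a+b=|\nu|$, Lemma \ref{lemma:regularitywrtrvs} contributes a power $C_T^{(2a-1)+(2b-1)}=C_T^{2|\nu|-2}\leq C_T^{2|\nu|-1}$ (as $C_T\geq1$) and a power of $3$ with exponent $\tfrac12\big(a^2+b^2+|\nu|-4\big)$, which I would check is $\leq\tfrac12(|\nu|+2)(|\nu|-1)$ using $a^2+b^2\leq(a+b)^2=|\nu|^2$. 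For the two boundary terms I would bound the extra factor $\|\psi_m\|_{H^1}\leq\|\psi\|_{L^\infty((0,T)\times U;H^s)}$ via Remark \ref{remark1} and absorb it into the generic constant $C_0$, so that each boundary term is likewise controlled by $3^{(|\nu|+2)(|\nu|-1)/2}C_T^{2|\nu|-1}\prod_{j}(\sqrt{\lambda_j}\|v_j\|_{H^1})^{\nu_j}$. Summing the $2^{|\nu|}$ terms then produces exactly the factor $2^{|\nu|}$ in $C_\nu$. The subtle point I would watch most carefully is precisely the boundary terms: they naively carry a power $C_T^{2|\nu|-1}\cdot\|\psi_m\|_{H^1}$ rather than $C_T^{2|\nu|-2}$, so keeping the stated exponent $2|\nu|-1$ forces $\|\psi_m\|_{H^1}$ to be swept into the generic constant $C_0$, which is legitimate since that constant is independent of $\nu$ and $m$.
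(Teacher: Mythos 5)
Your proposal is correct and follows essentially the same route as the paper's proof: commute $\partial^{\nu}$ past the linear functional $G$, apply the duality bound, expand $\partial^{\nu}|\psi_m|^2$ by the Leibniz rule, and control each product via the $H^1$ algebra property together with Lemma \ref{lemma:regularitywrtrvs}. Your bookkeeping of the exponents and your explicit treatment of the boundary terms $\mu=0$ and $\mu=\nu$ (absorbing $\|\psi_m\|_{H^1}$ into the generic constant) are in fact more careful than the paper's own two-line argument, which leaves these details implicit.
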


\begin{proof}
For any $\xi\in U_m$ and $t\in[0,T]$, we have
\begin{align*}
\vert \partial^{\nu} \mathcal{G}^m(t,\xi)  \vert &=\vert G \left( \partial^{\nu} |\psi_m|^2 \right) \vert \le \Vert G \Vert_{H^{1}(\bT)'} \| \partial^{\nu} |\psi_m|^2 \|_{H^{1}(\bT)}.
\end{align*}
By the algebraic property of $H^{1}$ and Lemma \ref{lemma:regularitywrtrvs}, we find
\begin{align*}
\vert \partial^{\nu} \mathcal{G}^m(t,\xi)  \vert &\leq\Vert G \Vert_{H^{1}(\bT)'} \sum_{{\mu}\preceq{\nu}}C_0
\left\|\partial^{{\nu}-{\mu}}\overline{\psi_m}\right\|_{H^{1}}\|\partial^{\mu}\psi_m\|_{H^{1}}\\
&\leq C_0C_\nu\Vert G \Vert_{H^{1}(\bT)'}
\displaystyle\prod_{j = 1}^m \left(\sqrt{\lambda_j}\|v_j\|_{H^{1}}\right)^{\nu_j}.
\end{align*}
\end{proof}

\subsection{Analysis of the QMC integration error}\label{sec:QMC-error}

With the preparation before, we now specify the QMC method, i.e., the quadrature points in (\ref{lattice}),  and then we shall derive its  error bound on the PDE level.

We adopt the weighted Sobolev space technique \cite{kuo2012quasi,Sloan_SINUM} here.
Consider the weighted and unanchored Sobolev space $\mathcal{W}_{m,\gamma}:=\{F(\xi):U_m\to\bR\  |\ \|F\|_{W_{m,\gamma}}<\infty\}$ with the norm
\begin{equation}\label{norm def}
\|F\|_{W_{m,\gamma}}:=\left[\sum_{{ \nu\in\Gamma}}\normalsize\gamma_\nu^{-1}
\int_{U_{|\nu|}}\left(\int_{U_{m-|\nu|}}\partial^\nu F(\xi)d\xi_{\nu}^\dag\right)^2d\xi_{\nu}\right]^{1/2},\end{equation}
where by the notation, $\xi$ is split into the active part $\xi_{\nu}$ for differentiations and the inactive part $\xi_{\nu}^\dag$, {i.e., $\xi_{\nu}$ consists of $\xi_j$ with $j$ such that $\nu_j = 1$ and $\xi_{\nu}^\dag$ consists of $\xi_k$ with $k$ such that $\nu_k = 0$}. The notation $\gamma_\nu$ denotes the product-type weight, i.e.,
$\gamma_\nu=\Pi_{\nu_j=1}\gamma_j$ with $\gamma_{\mathbf{0}_{\{1:m\}}}=1$,
which will be  chosen later, and   $\gamma:=\{\gamma_{\nu}\,|\, \nu\in\Gamma\}$. With the chosen $\gamma_j$ ($j=1,\ldots,m$) and the following worst-case error function
$$e^{wt}_{s}(z):=-1+\frac1N\sum_{p=1}^{N}\prod_{j=1}^s\left[1+\gamma_jB\left(\mathrm{frac}(pz_j/N)\right)
\right],\quad z=(z_1,\ldots,z_s),\ 1\leq s\leq m,$$
where $B(x)=-1/(2\pi^2)\sum_{l\in\bZ\setminus\{0\}}\fe^{2\pi ilx}/l^2$ is the Bernoulli polynomial,  the generating vector $z$ in (\ref{lattice}) is computed by the CBC algorithm \cite{Sloan Act}. That is to set $z_1=1$, and then determine each $z_s$ for $s=2,\ldots,m$ in a sequel by minimizing $e^{wt}_{s}(z)$ for $z_s\in\mathbb{U}_N=\{x\in\bZ\,|\, 1\leq x\leq N-1,\, \mathrm{gcd}(x,N)=1\}$. An explicit error bound of the QMC with  such constructed quadrature points can be established (see e.g., \cite{kuo2011quasi}). Here let us quote it as the following lemma.

\begin{lemma}(\cite[{Theorem 4.1}]{kuo2011quasi})
  With fixed $m,\,N,\,\gamma$ and some $F(\xi)\in W_{m,\gamma}$, the QMC method (\ref{lattice}) given by the CBC algorithm satisfies
  \begin{align}\label{worst error bound}
  \sqrt{\mathbb{E}^\Delta\left[|E_m(F)-Q_{m,N}[F]|^2\right]}
  \leq \left[\sum_{\nu\in\Gamma\setminus\mathbf{0}_{\{1:m\}}}\gamma_\nu^\lambda
  \left(\frac{2\zeta(2\lambda)}{(2\pi^2)^\lambda}\right)^{|\nu|}\right]^{1/(2\lambda)}
  \varphi(N)^{-1/(2\lambda)}\|F\|_{W_{m,\gamma}},
  \end{align}
  for any $\lambda\in(\frac12,1]$. Here $\varphi(N)=|\mathbb{U}_N|$ is the Euler totient function and $\zeta(\cdot)$ is the Riemann zeta function.
\end{lemma}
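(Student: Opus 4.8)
The plan is to pass from the randomized (root-mean-square) error on the left of \eqref{worst error bound} to a deterministic quantity, the shift-averaged worst-case error of the lattice rule, and then to control that quantity along the component-by-component construction. First I would use that $\mathcal{W}_{m,\gamma}$ endowed with the norm \eqref{norm def} is a reproducing kernel Hilbert space whose kernel factorizes over the coordinates, since the weights $\gamma_\nu=\prod_{\nu_j=1}\gamma_j$ are of product type. For a fixed shift $\Delta$ the integration error $F\mapsto E_m(F)-Q_{m,N}[F]$ is a bounded linear functional on $\mathcal{W}_{m,\gamma}$, so the Cauchy--Schwarz inequality in the RKHS gives $|E_m(F)-Q_{m,N}[F]|\le e_{m,N}(z,\Delta)\,\|F\|_{W_{m,\gamma}}$, where $e_{m,N}(z,\Delta)$ is the norm of the error functional, i.e.\ the worst-case error. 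Squaring and averaging over the uniform shift $\Delta\in[0,1]^m$ yields
\[
\mathbb{E}^\Delta\big[|E_m(F)-Q_{m,N}[F]|^2\big]\le [e^{sh}_{m,N}(z)]^2\,\|F\|_{W_{m,\gamma}}^2,\qquad [e^{sh}_{m,N}(z)]^2:=\int_{[0,1]^m}e_{m,N}(z,\Delta)^2\,d\Delta,
\]
which reduces the claim to an upper bound for the shift-averaged worst-case error $e^{sh}_{m,N}(z)$.

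Next I would make $[e^{sh}_{m,N}(z)]^2$ explicit. Expanding $e_{m,N}(z,\Delta)^2$ through the reproducing kernel and integrating out $\Delta$ collapses the double point-sum, and the one-dimensional factors of the kernel reproduce precisely the function $B$ through its Fourier series; one obtains the product formula
\[
[e^{sh}_{m,N}(z)]^2=-1+\frac1N\sum_{p=1}^N\prod_{j=1}^m\Big[1+\gamma_j\,B\big(\mathrm{frac}(pz_j/N)\big)\Big]=e^{wt}_m(z).
\]
Thus the shift-averaged worst-case error coincides with the worst-case error function minimized by the CBC algorithm; expanding the product, $e^{wt}_m(z)=\sum_{\mathbf 0\neq\nu\in\Gamma}\gamma_\nu\,\frac1N\sum_{p=1}^N\prod_{\nu_j=1}B(\mathrm{frac}(pz_j/N))$, and it remains to bound $e^{wt}_m(z)$ for the greedily constructed $z$.

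The core is an induction on the dimension $s=1,\dots,m$ (with $z_1=1$ and each $z_s$ minimizing $e^{wt}_s$ over $\mathbb{U}_N$) proving that, for every $\lambda\in(\tfrac12,1]$,
\[
\big(e^{wt}_m(z)\big)^\lambda\le \frac{1}{\varphi(N)}\sum_{\mathbf 0\neq\nu\in\Gamma}\gamma_\nu^\lambda\,\vartheta(\lambda)^{|\nu|},\qquad \vartheta(\lambda):=\frac{2\zeta(2\lambda)}{(2\pi^2)^\lambda},
\]
and likewise in the first $s$ coordinates; raising this to the power $1/(2\lambda)$ and combining with the RKHS bound above gives exactly \eqref{worst error bound}. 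At the inductive step the part of $e^{wt}_s$ carrying the new index $s$ is isolated; one uses the subadditivity $(a+b)^\lambda\le a^\lambda+b^\lambda$ for $\lambda\in(0,1]$ to peel it off, and replaces the minimizing $z_s$ by an average via the elementary inequality $(\min_{z_s}f)^\lambda\le \frac{1}{\varphi(N)}\sum_{z_s\in\mathbb{U}_N}f(z_s)^\lambda$. A sequence of Jensen-type estimates then reduces the new contribution to one-dimensional sums $\frac1{\varphi(N)}\sum_{z\in\mathbb{U}_N}$ of powers of $B(\mathrm{frac}(pz/N))$, which, through the Fourier expansion $B(x)=-\frac1{2\pi^2}\sum_{l\neq0}\fe^{2\pi ilx}/l^2$ and the orthogonality of the characters $z\mapsto\fe^{2\pi ilpz/N}$ over $\mathbb{U}_N$, produce the per-coordinate factor $\vartheta(\lambda)$ and the global factor $1/\varphi(N)$.

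I expect the main obstacle to be exactly this last stage: tracking the constants through the chain of subadditivity and Jensen steps while keeping the power $\lambda$ free, and evaluating (or sharply bounding) the character sums $\frac1{\varphi(N)}\sum_{z\in\mathbb{U}_N}(\cdots)$ uniformly in $p$ for general, possibly composite, $N$ (where Ramanujan-type sums replace the clean prime-$N$ orthogonality), so as to recover both the precise constant $\vartheta(\lambda)$ per active coordinate and the factor $1/\varphi(N)$ with the correct exponent. Since the statement is quoted verbatim as \cite[Theorem 4.1]{kuo2011quasi}, in the paper it suffices to invoke that reference; the outline above records the route one would follow to reprove it.
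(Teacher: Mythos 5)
The paper gives no proof of this lemma --- it is quoted verbatim from \cite[Theorem 4.1]{kuo2011quasi} --- and your outline correctly reconstructs the standard argument behind that reference (reproducing-kernel Cauchy--Schwarz to pass to the shift-averaged worst-case error, its explicit product formula matching $e^{wt}_m(z)$, and the CBC induction via $(\min f)^\lambda\le\frac{1}{\varphi(N)}\sum f^\lambda$, Jensen, and the subadditivity of $t\mapsto t^\lambda$ for $\lambda\in(\tfrac12,1]$). Since you also note that invoking the reference suffices here, your proposal is consistent with, and strictly more detailed than, what the paper does.
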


The possible choice of the weight $\gamma$ to provide the optimal convergence rate for the QMC is specified in the following lemma.

\begin{lemma}\label{lm: qmc pde bound}
 With fixed $m\in\bN_+$ and $N\leq10^{30}$, 
 choose $\lambda=1/(2-2\delta)$ for any $\delta\in(0,1/2)$.  
Then under Assumption \ref{assump} and Assumption \ref{assump2}, a QMC method (\ref{lattice}) can be constructed by the CBC algorithm which satisfies
  \begin{align}\label{error bound m}
  \sqrt{\mathbb{E}^\Delta\left[|E_m[\mathcal{G}^m](t)-Q_{m,N}[\mathcal{G}^m](t)|^2\right]}
  \leq CN^{\delta - 1},\quad t\in[0,T],
  \end{align}
where $C>0$ is some constant independent of $m$.
\end{lemma}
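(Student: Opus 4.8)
The plan is to follow the weighted Sobolev space framework of \cite{kuo2012quasi}, combining the worst-case error bound (\ref{worst error bound}) with the parametric derivative estimate of Lemma \ref{corF}. Writing $b_j:=\sqrt{\lambda_j}\|v_j\|_{H^{1}}$ and recalling that $U_m$ has unit measure, I would first estimate the norm: from the definition (\ref{norm def}) the inner integral is controlled by the supremum of $|\partial^\nu\mathcal{G}^m|$, so (\ref{MixedDerivativeMainEstmate}) gives
\[
\|\mathcal{G}^m\|_{W_{m,\gamma}}^2\le\left(C_0\|G\|_{H^{1}(\bT)'}\right)^2\sum_{\nu\in\Gamma}\gamma_\nu^{-1}C_\nu^2\prod_{j=1}^m b_j^{2\nu_j}.
\]
Substituting into (\ref{worst error bound}) reduces the lemma to showing that $\left[\sum_{\nu\neq\mathbf{0}}\gamma_\nu^\lambda\rho^{|\nu|}\right]^{1/(2\lambda)}\|\mathcal{G}^m\|_{W_{m,\gamma}}$, with $\rho:=2\zeta(2\lambda)/(2\pi^2)^\lambda$, is bounded uniformly in $m$, since the remaining factor $\varphi(N)^{-1/(2\lambda)}$ will supply the convergence rate.

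The main obstacle, and the source of the stronger hypothesis in Assumption \ref{assump2}, is that the constant $C_\nu$ from Lemma \ref{corF} carries the super-exponential factor $3^{(|\nu|+2)(|\nu|-1)/2}$, which is \emph{not} of product form and is therefore incompatible with the product weights $\gamma_\nu=\prod_{\nu_j=1}\gamma_j$ demanded by the CBC construction. The key step I would use to dissolve this is an ordering argument: if $\nu$ is supported on indices $j_1<\cdots<j_n$ with $n=|\nu|$, then $j_k\ge k$, so that
\[
3^{(|\nu|+2)(|\nu|-1)/2}\le 3^{\,|\nu|(|\nu|+1)/2}=3^{\,1+2+\cdots+n}\le\prod_{\nu_j=1}3^{j}.
\]
This converts the offending factor into a genuine product and is precisely why the exponential decay $\sum_j[3^j b_j]^{1/2}<\infty$ is imposed. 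Absorbing $2^{|\nu|}C_T^{2|\nu|-1}$ likewise, I obtain $C_\nu^2\prod_j b_j^{2\nu_j}\le C_T^{-2}\prod_{\nu_j=1}\beta_j$ with $\beta_j:=4C_T^4(3^j b_j)^2$, so both sums above are now of product type.

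From here the argument reduces to the classical optimisation. I would take the product weights $\gamma_j=(\beta_j/\rho)^{1/(1+\lambda)}$, which balances the two factors so that $\gamma_j^\lambda\rho=\beta_j/\gamma_j=:a_j$ with $a_j=\rho^{1/(1+\lambda)}\beta_j^{\lambda/(1+\lambda)}$. Both sums then factorise into $\prod_{j=1}^m(1+a_j)$, and the whole quantity becomes
\[
C_0\|G\|_{H^{1}(\bT)'}\,C_T^{-1}\,\varphi(N)^{-1/(2\lambda)}\left[\prod_{j=1}^m(1+a_j)\right]^{1/(2\lambda)+1/2}.
\]
It remains to verify the dimension-independent summability $\sum_{j=1}^\infty a_j<\infty$, which yields $\prod_{j=1}^m(1+a_j)\le\exp(\sum_j a_j)$ bounded uniformly in $m$.

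The summability is where the choice $\lambda=1/(2-2\delta)$ is used: since $a_j\propto(3^j b_j)^{2\lambda/(1+\lambda)}$ and $2\lambda/(1+\lambda)=2/(3-2\delta)>\tfrac12$ for every $\delta\in(0,1/2)$, and since $3^j b_j\to0$, the summand is eventually dominated by $(3^j b_j)^{1/2}$, whose sum is finite by Assumption \ref{assump2}. Finally, $1/(2\lambda)=1-\delta$, and for $N\le10^{30}$ the bound $1/\varphi(N)\le9/N$ gives $\varphi(N)^{-1/(2\lambda)}\le9^{1-\delta}N^{\delta-1}$, so (\ref{error bound m}) follows with a constant independent of $m$. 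The only points requiring care are confirming that $C_0$ and $C_T$ from Lemma \ref{lemma:regularitywrtrvs} and Lemma \ref{corF} carry no hidden $m$-dependence (already recorded there) and that $\rho=2\zeta(2\lambda)/(2\pi^2)^\lambda$ is finite, which holds because $2\lambda\in(1,2)$.
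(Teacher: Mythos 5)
Your proposal is correct and follows the same overall framework as the paper: bound $\|\mathcal{G}^m\|_{W_{m,\gamma}}$ via Lemma \ref{corF}, invoke the worst-case error bound \eqref{worst error bound} with $\lambda=1/(2-2\delta)$ and $1/\varphi(N)\le 9/N$, and use the ordering estimate $3^{(|\nu|+2)(|\nu|-1)/2}\le 3^{|\nu|(|\nu|+1)/2}\le\prod_{\nu_j=1}3^{j}$ together with Assumption \ref{assump2} to obtain an $m$-independent constant. The one genuine difference is the order of operations. The paper first takes the ``balancing'' weight $\gamma_\nu=[C_\nu\prod_j b_j^{\nu_j}\rho(\lambda)^{-\nu_j/2}]^{2/(1+\lambda)}$, which is \emph{not} of product form because of the factor $3^{(|\nu|+2)(|\nu|-1)/2}$ in $C_\nu$, and then needs a H\"older inequality (with exponents $\tfrac{1+\lambda}{2\lambda}$ and $\tfrac{1+\lambda}{1-\lambda}$) plus the substitution $\alpha_j=\sqrt{b_j/3^j}$ to separate that factor and reduce $A_\lambda$ to two product-type sums. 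You instead majorize $C_\nu^2\prod_j b_j^{2\nu_j}$ by a genuine product $\prod_{\nu_j=1}\beta_j$ \emph{first} and only then optimize, obtaining honest product weights $\gamma_j=(\beta_j/\rho)^{1/(1+\lambda)}$ and dispensing with the H\"older step entirely. This buys two things: the argument is shorter (both sums factorize immediately into $\prod_j(1+a_j)$, and no exponent $1/(1-\lambda)$ appears, so nothing degenerates as $\lambda\to1$), and the resulting weights are of the product form that the CBC worst-case error function $e^{wt}_s(z)$ in the paper is actually written for, whereas the paper's chosen $\gamma_\nu$ is not — so your version is, if anything, more internally consistent with the construction being invoked. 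The price is a slightly cruder constant, which is immaterial here.
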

\begin{proof}
It is direct to compute from the definition of the weighted norm (\ref{norm def}) and Lemma \ref{corF} that
\begin{align*}\|\mathcal{G}^m(t,\cdot)\|_{W_{m,\gamma}}=&\left[\sum_{{ \nu\in\Gamma}}\normalsize\gamma_\nu^{-1}
\int_{U_{|\nu|}}\left(\int_{U_{m-|\nu|}}\partial^\nu \mathcal{G}^m(t,\xi)d\xi_{\nu}^\dag\right)^2d\xi_{\nu}\right]^{1/2}\\
\leq &C_0\|G\|_{H^{1}(D)'}\left[\sum_{\nu\in\Gamma}\gamma_{\nu}^{-1}C_\nu^2\prod_{j=1}^{m}\lambda_j^{\nu_j}
\|v_j\|_{H^{1}}^{2\nu_j}\right]^{1/2}.
\end{align*}
Meanwhile, it is known that the Euler totient function satisfies $1/\varphi(N)\leq 9/N$ for $N\leq10^{30}$ \cite{Sloan NM}. Thus, by \eqref{worst error bound} and choosing  $\lambda=1/(2-2\delta)$ we find
\begin{align*}
  \sqrt{\mathbb{E}^\Delta\left[|E_m[\mathcal{G}^m](t)-Q_{m,N}[\mathcal{G}^m](t)|^2\right]}
  \leq 9C_\gamma C_0  \|G\|_{H^{1}(\bT)'}N^{\delta-1},
\end{align*}
with
$$C_\gamma=\left[\sum_{\nu\in\Gamma}\gamma_{\nu}^{-1}C_\nu^2\prod_{j=1}^{m}\lambda_j^{\nu_j}
\|v_j\|_{H^{1}}^{2\nu_j}\right]^{1/2}\left[\sum_{\nu\in\Gamma}\gamma_\nu^\lambda
  \left(\frac{2\zeta(2\lambda)}{(2\pi^2)^\lambda}\right)^{|\nu|}\right]^{1/(2\lambda)}.$$
  Denote $b_j=\sqrt{\lambda_j}\|v_j\|_{H^{1}}$ and $\rho(\lambda)=2\zeta(2\lambda)/(2\pi^2)^\lambda$ for simplicity.
Now we take
$$\gamma_\nu=\left[C_\nu\prod_{j=1}^m\frac{b_j^{\nu_j}}{\rho(\lambda)^{\nu_j/2}}\right]^{2/(1+\lambda)},$$
and it is direct to deduce that
$C_\gamma=(A_\lambda)^{\frac{\lambda+1}{2\lambda}}$ with
$$A_\lambda=\sum_{\nu\in\Gamma} C_\nu^{2\lambda/(1+\lambda)}\prod_{j=1}^{m}\left[b_j^{2\lambda\nu_j}
\rho(\lambda)^{\nu_j}\right]^{1/(1+\lambda)}.$$
It remains to show that $A_\lambda$ is bounded $\forall \nu\in\Gamma,m\in\bN$.

By the constant $C_\nu$ from Lemma \ref{corF} and using H\"older's inequality, 
we find
\begin{align*}
 A_\lambda\leq&\sum_{\nu\in\Gamma}\left[3^{|\nu|^2+|\nu|-1/2}\prod_{j=1}^{m}
 \alpha_j^{\nu_j}\right]^{2\lambda/(1+\lambda)}
 \prod_{j=1}^{m}\left[\frac{(2C_T^2)^{2\lambda}b_j^{2\lambda}
\rho(\lambda)}{\alpha_j^{2\lambda}}\right]^{\nu_j/(1+\lambda)}\\
\leq&\left(\sum_{\nu\in\Gamma}\prod_{j=1}^{m}
 3^{\nu_js_j}\alpha_j^{\nu_j}\right)^{2\lambda/(1+\lambda)}\left(\sum_{\nu\in\Gamma}
 \prod_{j=1}^{m}\left[\frac{(2C_T^2)^{2\lambda}b_j^{2\lambda}
\rho(\lambda)}{\alpha_j^{2\lambda}}\right]^{\nu_j/(1-\lambda)}\right)^{(1-\lambda)/(1+\lambda)},
\end{align*}
where $s_j:=\sum_{l=1}^j\nu_l$ and $\alpha_j>0$ is chosen as follows. Note the fact that
$\sum_{\nu\in\Gamma}\prod_{j=1}^{m}\beta_j^{\nu_j}\leq \exp(\sum_{j\geq1}\beta_j)$ holds for any $m\geq1$ and  any $\beta_j>0$ with $\sum_{j\geq1}\beta_j<\infty$ (see \cite{kuo2012quasi}), then we have: $\forall \nu\in\Gamma,m\in\bN$,
\begin{align*}
 A_\lambda
\leq&\exp\left(\frac{2\lambda}{1+\lambda}\sum_{j\geq1}
 3^{j}\alpha_j\right)\exp\left(\frac{1-\lambda}{1+\lambda}\sum_{j\geq1}
 \left[\frac{(2C_T^2)^{2\lambda}b_j^{2\lambda}
\rho(\lambda)}{\alpha_j^{2\lambda}}\right]^{1/(1-\lambda)}\right)=:A_{max} .
\end{align*}
By taking $\alpha_j=\sqrt{b_j/3^j}$ and under  Assumption \ref{assump2}, we have $\sum_{j\geq1}
 3^{j}\alpha_j<1$. Moreover, with $\lambda\in(\frac12,1]$ we have
 $\frac{\lambda}{1-\lambda}\geq 1$ and so
$$\sum_{j\geq1}(b_j/\alpha_j)^{2\lambda/(1-\lambda)}=\sum_{j\geq1}
(3^j b_j)^{\lambda/(1-\lambda)}<\infty.$$
Thus, we find the upper bound $A_{max}$ is finite.
\end{proof}

\begin{proposition}\label{prop1}
Under the same assumption of Lemma \ref{lm: qmc pde bound},  there exists a constant $C>0$ independent of $m$  such that
\begin{align}
\sqrt{\mathbb{E}^{\Delta}\left[\left|\mathbb{E}[\mathcal{G}](t) - Q_{m,N}[\mathcal{G}^m](t) \right|^2\right]} \ \leq C(m^{-\chi}+ N^{\delta - 1}),\quad t\in[0,T],
\label{QMC-error}
\end{align}
for any $\delta\in(0,1/2)$.
\end{proposition}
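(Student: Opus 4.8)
The plan is to decompose the total error on the right-hand side of \eqref{QMC-error} into the dimension truncation error and the QMC integration error, and then to invoke the two estimates already established, namely Lemma \ref{lm:trun G} and Lemma \ref{lm: qmc pde bound}. Concretely, I would insert the intermediate quantity $\mathbb{E}_m[\mathcal{G}^m](t)$ and write
$$
\mathbb{E}[\mathcal{G}](t) - Q_{m,N}[\mathcal{G}^m](t)
= \left( \mathbb{E}[\mathcal{G}](t) - \mathbb{E}_m[\mathcal{G}^m](t) \right)
+ \left( \mathbb{E}_m[\mathcal{G}^m](t) - Q_{m,N}[\mathcal{G}^m](t) \right).
$$
Applying Minkowski's inequality for the $L^2$-norm with respect to the random shift then gives
$$
\sqrt{\mathbb{E}^{\Delta}\left[ \left| \mathbb{E}[\mathcal{G}](t) - Q_{m,N}[\mathcal{G}^m](t) \right|^2 \right]}
\leq \sqrt{\mathbb{E}^{\Delta}\left[ \left| \mathbb{E}[\mathcal{G}](t) - \mathbb{E}_m[\mathcal{G}^m](t) \right|^2 \right]}
+ \sqrt{\mathbb{E}^{\Delta}\left[ \left| \mathbb{E}_m[\mathcal{G}^m](t) - Q_{m,N}[\mathcal{G}^m](t) \right|^2 \right]}.
$$

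The key observation for the first term is that the truncation error $\mathbb{E}[\mathcal{G}](t) - \mathbb{E}_m[\mathcal{G}^m](t)$ is a deterministic quantity, since the random shift $\Delta$ enters the analysis only through the quadrature points defining $Q_{m,N}$, whereas both $\mathbb{E}[\mathcal{G}]$ and $\mathbb{E}_m[\mathcal{G}^m]$ are exact integrals. Hence its root-mean-square over $\Delta$ reduces to its absolute value, which Lemma \ref{lm:trun G} bounds by $Cm^{-\chi}$ with $C$ independent of $m$. The second term is exactly the shifted-averaged QMC integration error treated in Lemma \ref{lm: qmc pde bound} (recalling that $E_m$ and $\mathbb{E}_m$ denote the same exact integral over $U_m$), yielding the bound $CN^{\delta-1}$ uniformly in $m$. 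Summing these two contributions produces the claimed estimate $C(m^{-\chi} + N^{\delta-1})$.

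I do not expect any genuinely hard step here: the entire substance of the proposition has been front-loaded into the preceding lemmas, in particular the delicate parametric regularity bound of Lemma \ref{lemma:regularitywrtrvs} that feeds the weighted-norm estimate driving Lemma \ref{lm: qmc pde bound}, together with the Gronwall-based truncation estimate of Lemma \ref{truncation error} underlying Lemma \ref{lm:trun G}. The only point demanding care is to confirm that the constants from both lemmas are uniform in $m$ before merging them into a single $m$-independent constant $C$; both cited lemmas already assert this uniformity, so the combination is immediate.
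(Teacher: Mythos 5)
Your proposal is correct and follows essentially the same route as the paper: insert $\mathbb{E}_m[\mathcal{G}^m]$, note the truncation term is independent of the random shift, and invoke Lemma \ref{lm:trun G} and Lemma \ref{lm: qmc pde bound}. The only cosmetic difference is that you use Minkowski's inequality where the paper uses the elementary bound $(a+b)^2\leq 2a^2+2b^2$; both yield the same $m$-independent constant.
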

\begin{proof}
Firstly, by triangle inequality we have
\begin{align*}
|\mathbb{E}[\mathcal{G}](t) - Q_{m,N}[\mathcal{G}^m](t) |\leq | \mathbb{E}[\mathcal{G}](t) - \mathbb{E}_m[\mathcal{G}^m]| +| \mathbb{E}_m[\mathcal{G}^m] - Q_{m,N}[\mathcal{G}^m](t) |.
\end{align*}
The first part of right-hand-side is independent of the random shift $\Delta$, so
\begin{align*}\mathbb{E}^\Delta\left[|\mathbb{E}[\mathcal{G}](t) - Q_{m,N}[\mathcal{G}^m](t) |^2\right]\leq& 2\left|\mathbb{E}[\mathcal{G}](t) - \mathbb{E}_m[\mathcal{G}^m](t)\right|^2 \\
&+2\mathbb{E}^\Delta\left[| \mathbb{E}_m[\mathcal{G}^m](t) - Q_{m,N}[\mathcal{G}^m](t) |^2\right].
\end{align*}
Then by Lemma \ref{lm:trun G} and Lemma \ref{lm: qmc pde bound}, we obtain the result.
\end{proof}

\begin{remark} \label{remark: exsistence of QMC that converges almost linearly}
We remark the choice of the weight $\gamma$ and the error constant given in our analysis above may be far away from optimal. Our result theoretically serves to show the existence of the randomly shifted QMC lattice rule that can converge at the desired {almost-linear} and dimension-independent rate for solving the  {random} NLS (\ref{nls}).
\end{remark}

\section{Error of the scheme}\label{sec:Fullscheme}
In this section, we continue the analysis to complete the full error estimate result  given in Section \ref{subsec:main}. To do so, we first carry out the error estimate of the TSFP scheme on the truncated problem (\ref{nls trun}), which is stated as the following.

\begin{proposition}\label{prop2}
  Let $\psi_m^n$ be the numerical solution from  the TSFP scheme (\ref{scheme}) for solving the truncated problem (\ref{nls trun}) till any fixed $T>0$ with any $m\in\bN$. Under the same assumption in Theorem \ref{thm}, there exist constants $\tau_0,h_0>0$ independent of $m$ such that when $\tau\leq\tau_0$ and $h\leq h_0$, the following error bound holds
  $$\|I_M\psi_m^n-\psi_m(t_n,\xi,\cdot)\|_{H^{1}}\leq C(\tau^2+h^{s-1}),\quad \forall\, 0\leq t_n\leq T,\
  \xi\in U_m,$$
  for some constant $C>0$ independent of $\xi,m,\tau$ and $h$.
\end{proposition}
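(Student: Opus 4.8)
The plan is to compare the fully discrete solution with the exact one through an intermediate \emph{semi-discrete-in-time} Strang solution, thereby decoupling the temporal splitting error from the spatial pseudospectral error. I define $\phi_m^{n}$ by the recursion $\phi_m^{n+1}=\Psi^{\mathrm{k}}_{\tau/2}\circ\Psi^{\mathrm{p}}_{\tau}\circ\Psi^{\mathrm{k}}_{\tau/2}(\phi_m^{n})$ with $\phi_m^0=\psi_{in}$, using the \emph{exact} subflows in \eqref{subflow} (continuous in space), and split by the triangle inequality
\begin{equation*}
\|I_M\psi_m^n-\psi_m(t_n,\xi,\cdot)\|_{H^{1}}\le\underbrace{\|\phi_m^n-\psi_m(t_n,\xi,\cdot)\|_{H^{1}}}_{\text{temporal}}+\underbrace{\|I_M\psi_m^n-\phi_m^n\|_{H^{1}}}_{\text{spatial}}.
\end{equation*}
The essential point for Theorem \ref{thm} is that every constant below is controlled solely through $\|V\|_{L^\infty(U;H^s)}$, $\|\psi\|_{L^\infty((0,T)\times U;H^s)}$, $\alpha$, $T$ and $L$, all of which are uniform in $\xi$ and $m$ by Assumption \ref{assump} and Remark \ref{remark1}; hence the final $C$, and the thresholds $\tau_0,h_0$, are independent of $\xi$ and $m$.

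For the temporal part I would run the standard local-error/global-error analysis of the Strang splitting. A crucial structural feature is that $\Psi^{\mathrm{p}}_\tau$ preserves $|\phi_m|$ pointwise (since $V_m$ and $\alpha$ are real, so $\partial_t|\phi_m|^2=0$), making it an explicit phase multiplication, while $\Psi^{\mathrm{k}}_\tau$ is a Fourier multiplier of unit modulus; both are $L^2$ isometries. Using the variation-of-constants representation of the one-step defect together with a Lie-commutator expansion, the local error is $O(\tau^3)$ in $H^1$; its leading term involves the double commutators of the kinetic generator $\tfrac{i}{2}\partial_x^2$ with the potential-nonlinear generator and therefore costs up to four spatial derivatives, which is precisely where the hypothesis $s\ge5$ enters. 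The requisite $H^5$ bound on the exact solution is furnished by Lemma \ref{lm1} with Remark \ref{remark1}. A Lady-Windermere-fan summation, using $L^2$-isometry of the subflows and Lipschitz stability of the split flow in $H^1$, then gives $\|\phi_m^n-\psi_m(t_n,\xi,\cdot)\|_{H^1}\le C\tau^2$; this step also requires a uniform-in-$n$ $H^s$ bound on $\phi_m^n$, which follows from the standard stability of splitting for $\tau\le\tau_0$.

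For the spatial part I would use the spectral interpolation estimate $\|u-I_Mu\|_{H^1}\le Ch^{s-1}\|u\|_{H^s}$ for $u\in H^s(\bT)$, applied to the smooth iterates $\phi_m^n$. Comparing one step of \eqref{scheme} with one exact-in-space splitting step applied to $I_M\phi_m^n$, I would control (i) the kinetic half-steps, which are diagonalized exactly by the discrete Fourier transform, and (ii) the aliasing error from interpolating the pointwise nonlinear factor $\fe^{-i[V_m+\alpha|\cdot|^2]\tau}$; both contribute $O(h^{s-1})$ per step under a uniform $H^s$ bound on $\phi_m^n$. A discrete Gronwall inequality in $H^1$, again exploiting that the discrete kinetic and nonlinear maps are $l^2$ isometries, accumulates these to $\|I_M\psi_m^n-\phi_m^n\|_{H^1}\le Ch^{s-1}$.

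The main obstacle is the nonlinear stability needed to close both estimates: the scheme conserves only the discrete $l^2$ norm, not the $H^1$ norm, yet the Lipschitz constant of the cubic nonlinearity (and hence all error-propagation constants) depends on the $L^\infty$, equivalently $H^1$, size of the numerical solution. I would resolve this by an induction on $n$: assuming the asserted bound at $t_n$, the one-dimensional embedding $H^1(\bT)\hookrightarrow L^\infty(\bT)$ yields a uniform bound on $I_M\psi_m^n$ inherited from $\psi_m$, which controls the nonlinear increments and propagates the error to $t_{n+1}$ with a constant independent of $n$; choosing $\tau_0,h_0$ small enough, depending only on the $\xi$- and $m$-uniform data above, recovers the induction hypothesis and lets the number of steps $T/\tau$ enter only through a factor $e^{CT}$. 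Tracking that every threshold and constant depends solely on these uniform quantities then delivers $\tau_0,h_0$ and $C$ independent of $m$ and $\xi$, completing the proof.
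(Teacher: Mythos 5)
Your argument is sound in outline but organizes the error genuinely differently from the paper. You insert a semi-discrete-in-time Strang sequence $\phi_m^n$ and run two separate global analyses (a Lady-Windermere fan for the temporal error, a discrete Gronwall for the spatial error), whereas the paper never introduces such an intermediate object: it proves the one-step consistency bound $\|\psi_m(t_{n+1})-\Psi_{\frac{\tau}{2}}^{\mathrm{k}}\circ\Psi_{\tau}^{\mathrm{p}}\circ\Psi_{\frac{\tau}{2}}^{\mathrm{k}}(\psi_m(t_n))\|_{H^1}\le C\tau^3$ directly against the exact solution (via Taylor expansion, the Duhamel formula, and the Peano-kernel form of the midpoint-rule error, with the $H^5$ regularity entering exactly where your commutator count places it), and then runs a single recursion on $\|I_M\psi_m^n-P_M\psi_m(t_n)\|_{H^1}$ that combines the $O(\tau^3)$ local defect, the $O(h^{s-1})$ projection/interpolation error, and an induction on the $H^1$-boundedness of $I_M\psi_m^n$ in one Gronwall step. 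The substantive consequence of your decomposition is that the paper only ever needs $H^s$ bounds on the \emph{exact} solution $\psi_m$ (Lemma \ref{lm1} and Remark \ref{remark1}) together with an $H^1$ bound on the fully discrete iterates, while your spatial step applies $\|u-I_Mu\|_{H^1}\le Ch^{s-1}\|u\|_{H^s}$ to the semi-discrete iterates $\phi_m^n$ (and your aliasing estimate likewise), so you must additionally establish uniform-in-$n$ bounds on $\|\phi_m^n\|_{H^s}$ up to $s=5$. That is a genuine extra lemma on regularity propagation of the splitting iterates, not a consequence of the $L^2$-isometry of the subflows, and your proof currently asserts it rather than proves it; it is provable along the lines of Lubich's analysis, but it is precisely the work the paper's single-recursion decomposition avoids. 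In exchange, your route gives a cleaner conceptual separation of the temporal and spatial error sources; both correctly yield constants and thresholds $\tau_0,h_0$ independent of $\xi$ and $m$, since all quantities trace back to $\|V\|_{L^\infty(U;H^s)}$ and $\|\psi\|_{L^\infty((0,T)\times U;H^s)}$.
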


\begin{proof}
The proof largely follows the framework of analysis for the Strang splitting in \cite{BaoCai,Lubich}. Here we shall go through it mainly to check the impact from the parametric space. Let us omit the variables $x,\xi$ and denote $\psi_m(t)=\psi_m(t,\xi,x)$ for simplicity.

We begin with the estimate of the temporal truncation error. 
Denoting $v^n(\rho):=\fe^{\frac12 i\partial_x^2\rho}\psi_m(t_n)$ for short,
by Taylor's expansion we find
\begin{align*}&\Psi_\tau^{\mathrm{p}}\circ
\Psi_\frac{\tau}{2}^\mathrm{k}(\psi_m(t_n))
=\left[1+i\tau(V_m+\alpha|v^n(\tau/2)|^2)-
\frac{\tau^2}{2}(V_m+\alpha|v^n(\tau/2)|^2)^2\right]v^n(\tau/2)
 +r_1^n,
\end{align*}
where $r_1^n$ depends on $V_m$ and $v^n$. By Remark \ref{remark1}, we have $\|r_1^n\|_{H^1}\leq C\tau^3$ with $C$ depending on the $H^1$-norm of $V_m,\psi_m$.
Then,
\begin{align}\label{split err eq1}
 \Psi_\frac{\tau}{2}^\mathrm{k}\circ \Psi_\tau^{\mathrm{p}}\circ
\Psi_\frac{\tau}{2}^\mathrm{k}(\psi_m(t_n))=&v^n(\tau)+
i\tau\fe^{\frac12 i\partial_x^2\tau/2}\left(V_m+\alpha|v^n(\tau/2)|^2\right)v^n(\tau/2)\nonumber\\
&-\frac{\tau^2}{2}\left(V_m+\alpha|v^n(\tau/2)|^2\right)^2v^n(\tau/2)+
r_2^n,
\end{align}
where
$$r_2^n=\fe^{i\partial_x^2\tau/4} r_1^n+\int_{0}^\tau \frac{i}{4}\partial_x^2\fe^{i\partial_x^2s/4}\left(V_m+\alpha|v^n(\tau/2)|^2\right)^2v^n(\tau/2)
ds.$$
Under the assumption $s\geq5$, we have $\|r_2^n\|_{H^1}\leq C\tau^3$ with $C$ depending on the $H^3$-norm of $V_m$ and $\psi_m$.
On the other hand, by the Duhamel formula of (\ref{nls trun}), we have
\begin{align}\label{splitting duhamel}
  \psi_m(t_{n+1})=\fe^{\frac12 i\tau\partial_x^2}
  \psi_m(t_n)+i\int_0^\tau g_\tau^n(\rho)d\rho,\quad n\geq0,
\end{align}
with $g_\tau^n(\rho):=\fe^{\frac12 i(\tau-\rho)\partial_x^2}[V_m+
\alpha|\psi_m(t_n+\rho)|^2]\psi_m(t_n+\rho)$. By plugging
$ \psi_m(t_{n}+\rho)=v^n(\rho)+i\int_0^\rho g_\rho^n(s)ds$
into (\ref{splitting duhamel}) we find
\begin{align*}
\psi_m(t_{n+1})=&v^n(\tau)+i\int_0^\tau\fe^{\frac12 i(\tau-\rho)\partial_x^2}\left[V_m+\alpha
|v^n(\rho)|^2\right]v^n(\rho)d\rho\\
&+i\int_0^\tau\fe^{\frac12 i(\tau-\rho)\partial_x^2}
\int_0^\rho\left[i (V_m+2\alpha|v^n(\rho)|^2)g_\rho^n(s)-i\alpha v^n(\rho)^2\overline{g_\rho^n(s)}\right]ds d\rho+r_3^n,\end{align*}
where clearly $\|r_3^n\|_{H^1}\leq C\tau^3$ with $C$ depending on the $H^1$-norm of $V_m,\psi_m$.
Note $g_\rho^n(s)
=[V_m+\alpha|v^n(\rho)|^2]v^n(\rho)+\mathcal{O}(\tau\partial_x^2)$ for $0\leq s\leq \rho\leq\tau$,
we further get
\begin{align*}
 \psi_m(t_{n+1})=&v^n(\tau)+i\int_0^\tau\fe^{\frac12 i(\tau-\rho)\partial_x^2}\left[V_m+
\alpha|v^n(\rho)|^2\right]v^n(\rho)d\rho\\
&-\int_0^\tau
\rho (V_m+\alpha |v^n(\rho)|^2)^2v^n(\rho)d\rho+r_3^n+r_4^n,
\end{align*}
where $\|r_4^n\|_{H^1}\leq C\tau^3$ with $C$ depending on the $H^3$-norm of $V_m,\psi_m$.
Subtracting the above from (\ref{split err eq1}) and then using the quadrature error formula of the midpoint rule, we find
\begin{align*}
 &\|\psi_m(t_{n+1})-\Psi_\frac{\tau}{2}^\mathrm{k}\circ \Psi_\tau^{\mathrm{p}}\circ
\Psi_\frac{\tau}{2}^\mathrm{k}(\psi_m(t_n))\|_{H^{1}}\\
\leq& \|r_2^n\|_{H^1}+\|r_3^n\|_{H^1}+\|r_4^n\|_{H^1}+
\tau^3\int_0^1\kappa(\theta) \|f''(\theta\tau)\|_{H^1}d\theta ,
\end{align*}
where $\kappa(\theta)$ is the Peano kernel of the midpoint rule and
$$f(\rho):=i\fe^{\frac12 i(\tau-\rho)\partial_x^2}\left[V_m+
\alpha|v^n(\rho)|^2\right]v^n(\rho)-
\rho (V_m+\alpha |v^n(\rho)|^2)^2v^n(\rho).$$
Therefore, we find in total
\begin{align}\label{temporal eq1}
 \|\psi_m(t_{n+1})-\Psi_\frac{\tau}{2}^\mathrm{k}\circ \Psi_\tau^{\mathrm{p}}\circ
\Psi_\frac{\tau}{2}^\mathrm{k}(\psi_m(t_n))\|_{H^{1}}\leq C\tau^3,\quad 0\leq n< T/\tau,
\end{align}
with $C$ depending on the $H^5$-norm of $V_m,\psi_m$.
By the uniform bounds of $V_m$ and $\psi_m$ in $H^s$ with $s\geq5$ from the assumption and the result of  Lemma \ref{lm1} and Remark \ref{remark1}, we find the error constant $C>0$ in (\ref{temporal eq1}) is eventually a constant depending only on  $\|\psi\|_{L^\infty((0,T)\times U;H^s)}$ and
$\|V\|_{L^\infty(U;H^s)}$.

Next, by the triangle inequality and the projection error (denote by $P_M$ the projection operator to cutoff the Fourier modes of a function larger than $M/2$) \cite{Shen}, we find
\begin{align*}
\|I_M\psi_m^{n+1}-\psi_m(t_{n+1})\|_{H^{1}}\leq \|I_M\psi_m^{n+1}-P_M\psi_m(t_{n+1})\|_{H^{1}}+Ch^{s-1},
\end{align*}
with $C$ here depends only on $\|\psi_m\|_{L^\infty((0,T)\times U_m;H^s)}$ and so only on $\|\psi\|_{L^\infty((0,T)\times U;H^s)}$.
Noting that $I_M\psi_m^{n+1}=\Psi_\frac{\tau}{2}^\mathrm{k}\circ I_M[\Psi_\tau^{\mathrm{p}}\circ
\Psi_\frac{\tau}{2}^\mathrm{k}(I_M\psi_m^n)]$ and $\Psi_\frac{\tau}{2}^\mathrm{k}$ is isometric in $H^1$, we find by the triangle inequality and (\ref{temporal eq1}),
\begin{align}\|I_M\psi_m^{n+1}-P_M\psi_m(t_{n+1})\|_{H^1}\leq \|I_M\Psi_\tau^{\mathrm{p}}\circ
\Psi_\frac{\tau}{2}^\mathrm{k}(I_M\psi_m^n)- P_M\Psi_\tau^{\mathrm{p}}\circ
\Psi_\frac{\tau}{2}^\mathrm{k}(\psi_m(t_n))\|_{H^1}+C\tau^3. \label{temporal eq2}
\end{align}
Then by an induction on the boundedness of $I_M\psi_m^{n}$ in $H^1$-space and the difference between the flows, we find with some $C$ depending only on $\|\psi\|_{L^\infty((0,T)\times U;H^s)}$ and
$\|V\|_{L^\infty(U;H^s)}$,
\begin{align*}
 &\left\|I_M\Psi_\tau^{\mathrm{p}}\circ
\Psi_\frac{\tau}{2}^\mathrm{k}(I_M\psi_m^n)- P_M\Psi_\tau^{\mathrm{p}}\circ
\Psi_\frac{\tau}{2}^\mathrm{k}(\psi_m(t_n))\right\|_{H^1}\\
\leq &\|I_M\psi_m^{n}-P_M\psi_m(t_{n})\|_{H^1}+C\tau\|I_M\psi_m^n-P_M\psi_m(t_n)\|_{H^1}+C\tau h^{s-1}.
\end{align*}
Plugging it into (\ref{temporal eq2}), summing the inequalities up till $n=0$ and noting
$I_M\psi_m^{0}=I_M\psi_m(0)$, the Gronwall inequality gives
\begin{align*}
 \|I_M\psi_m^{n+1}-\psi_m(t_{n+1})\|_{H^1}\leq C(\tau^2+ h^{s-1}),
\end{align*}
for some $C$ depending only on $T,$ $\|\psi\|_{L^\infty((0,T)\times U;H^s)}$ and
$\|V\|_{L^\infty(U;H^s)}$.
Thus, there exist constants $\tau_0,h_0>0$ uniformly in $m$ such that for $\tau\leq\tau_0$ and $h\leq h_0$, $\|I_M\psi_m^{n+1}\|_{H^1}\leq \|\psi\|_{L^\infty((0,T)\times U;H^s)}+1$ for all $t_{n+1}\leq T$.
\end{proof}

With all the obtained results so far, we are now ready to get the full error bound stated in our main theorem in Section \ref{subsec:main} for the QMC-TSFP scheme (\ref{scheme}). The proof is given as follows.

\noindent\emph{Proof of the Theorem \ref{thm}:} By the triangle inequality, we have
\begin{align}
&\mathbb{E}^{\Delta}\left[\left|\mathbb{E}[\mathcal{G}](t_n)-Q_{m,N}[G(|I_M\psi_m^n|^2)]\right|^2\right]\nonumber\\
\leq& 2\mathbb{E}^{\Delta}\left[\left|\mathbb{E}[\mathcal{G}](t_n) - Q_{m,N}[\mathcal{G}^m](t_n) \right|^2\right]
+2\mathbb{E}^{\Delta}\left[\left|Q_{m,N}[G(|I_M\psi_m^n|^2)]- Q_{m,N}[\mathcal{G}^m](t_n) \right|^2\right]\nonumber\\
\leq &C(m^{-\chi}+ N^{\delta - 1})^2+2\mathbb{E}^{\Delta}\left[\left|Q_{m,N}[G(|I_M\psi_m^n(\xi,\cdot)|^2)-G(|\psi_m(t_n, \xi,\cdot)|^2)]\right|^2\right],\label{proofthm eq1}
\end{align}
where we used Proposition \ref{prop1}. By (\ref{QMC1}) and Assumption \ref{assump}, we find
\begin{align*}
  &\mathbb{E}^{\Delta}\left[\left|Q_{m,N}[G(|I_M\psi_m^n(\xi,\cdot)|^2)-G(|\psi_m(t_n, \xi,\cdot)|^2)]\right|\right] \\
  \leq &\sup_{\xi\in U_m}\left|G(|I_M\psi_m^n(\xi,\cdot)|^2)-G(|\psi_m(t_n, \xi,\cdot)|^2)\right|\\
  \leq &\|G\|_{H^1(D)'}\sup_{\xi\in U_m}
  \left\||I_M\psi_m^n(\xi,\cdot)|^2-|\psi_m(t_n, \xi,\cdot)|^2\right\|_{H^1}.
\end{align*}
Then, by the error estimate in Proposition \ref{prop2} which is uniform in $\xi$ and $m$, we  plug the above into (\ref{proofthm eq1}) to get
$$\sqrt{\mathbb{E}^{\Delta}\left[\Big|\mathbb{E}[\mathcal{G}](t_n)-Q_{m,N}[G(|I_M\psi_m^n|^2)]\Big|^2
\right]}\leq C
(\tau^2+h^{s-1}+m^{-\chi}+N^{\delta - 1}),\ t_n\in[0, T],
$$
for some constant $C>0$ independent of $m$. This completes the proof. \qed

\section{Numerical examples}\label{sec:result}
In this section, we carry out numerical experiments for the QMC-TSFP scheme. We shall test its accuracy in the random space, physical space and time to verify the theoretical result. The generating vector (\ref{lattice}) for QMC is constructed by the code in \cite{Kuoweb} (see also \cite{focm}). Some simulations on the wave propagation are provided in the end.

\subsection{Convergence test}

\begin{example}[Convergence in random space]
\label{example: convergence of QMC}
Consider \eqref{nls trun} with $\alpha = 1, \bT = [-\pi, \pi]$, the initial data
$
\psi_{in}(x)= \sqrt{8/\pi} \exp(-8x^2)
$
and the random potential
\begin{align} \label{eq: potential of convergence test}
V_m(\bxi, x) = 1 + \sum_{j = 1}^m \frac{1}{j^2} \xi_j \cos(j x),
\end{align}
where $\{\xi_j\}$ are i.i.d. uniformly distributed random variables on $[-1, 1]$.  Firstly, we test and compare the performance of the Monte Carlo (MC) method and the QMC method. One set of the numerical solutions are computed by TSFP combined with  MC  under $\tau = 10^{-4}, h = \frac{\pi}{64}$ and $N_{\text{MC}}$ samples $\{\xi_{\text{MC}}^{(p)}: p = 1, \ldots, N_{\text{MC}}\}$. The other set of solutions are computed by QMC-TSFP  under $\tau = 10^{-4}, h = \frac{\pi}{64}$ and $RN_{\text{QMC}}$ samples $\{\xi_{\text{QMC}}^{(r, p)}: r = 1, \ldots, R, p = 1, \ldots, N_{\text{QMC}}\}$, where $R$ is the number of shifts and $N_{\text{QMC}}$ is the number of samples for each  shift. To have a fair comparison between these two methods, we set $N_{\text{tot}} = N_{\text{MC}} = R N_{\text{QMC}}$. We choose $R = 10$ and $N_{\text{QMC}} = 2^{10}, 2^{11}, \ldots, 2^{16}$. The final time is fixed as $t_n=T = 1$.

\begin{figure}[t!]
  \centering
  \subcaptionbox{$L^2$ relative error \eqref{eq: L2 relative error of convergence test} for $m = 6$ \label{fig: QMC convergence for m = 6}}{\includegraphics[width=0.48\textwidth]{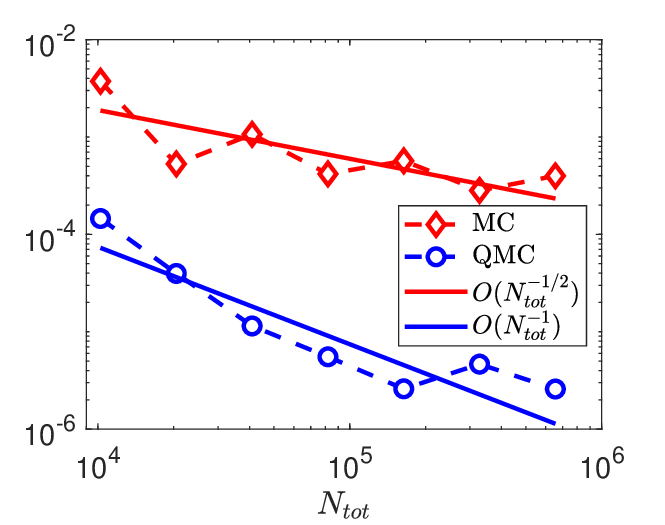}}
  \subcaptionbox{RMS \eqref{eq: RMS error for MC} and \eqref{eq: RMS error for QMC} for $m = 16$ \label{fig: QMC convergence for m = 16}}{\includegraphics[width=0.48\textwidth]{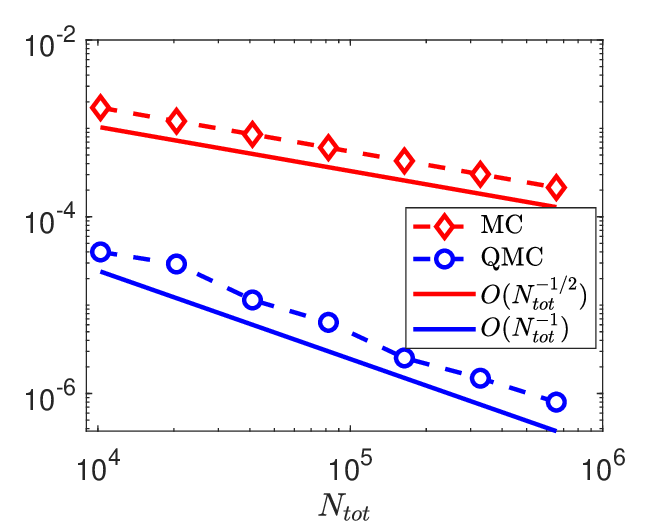}}
  \caption{Convergence of QMC-TSFP in random space.}
  \label{fig: QMC convergence}
\end{figure}

For $m$, we consider two choices: $m = 6$ and $m = 16$. For $m = 6$, we compute the reference solution using TSFP combined with the multidimensional stochastic collocation method \cite{babuvska2007stochastic} with $\tau = 10^{-4}, h = \frac{\pi}{64}$ and $15$ Gauss-Legendre points in each of the $m$ dimensions of $\xi$. We consider the $L^2$ relative error of the expectation of the density
\begin{align} \label{eq: L2 relative error of convergence test}
\text{$L^2$ relative error} = \frac{\Vert \mathbb{E}_{\text{num}}[\vert \psi_{\text{num}}(x) \vert^2] - \mathbb{E}_{\text{ref}}[\vert \psi_{\text{ref}}(x) \vert^2] \Vert_{L^2}}{\Vert \mathbb{E}_{\text{ref}}[\vert \psi_{\text{ref}}(x) \vert^2] \Vert_{L^2}},
\end{align}
where for the MC method
\begin{align}
\mathbb{E}_{\text{num}}[\vert \psi_{\text{num}}(x) \vert^2] = \frac{1}{N_{\text{MC}}} \sum_{p = 1}^{N_{\text{MC}}} \vert I_M \psi^n_m(\xi_{\text{MC}}^{(p)}, x) \vert^2,
\end{align}
and for the QMC method
\begin{align}
\mathbb{E}_{\text{num}}[\vert \psi_{\text{num}}(x) \vert^2] = \frac{1}{R N_{\text{QMC}}} \sum_{r = 1}^R \sum_{p = 1}^{N_{\text{QMC}}} \vert I_M \psi^n_m(\xi_{\text{QMC}}^{(r, p)}, x) \vert^2,
\end{align}
with $n = T / \tau$ and $M = 2\pi / h$. The result for $m = 6$ is shown in Figure \ref{fig: QMC convergence for m = 6}. For $m = 16$,  a reference solution is not easy to obtain. Here we consider a physical observable given by the following functional $G$:
\begin{align} \label{eq: linear functional of second spatial moment}
G(|\psi(t, \xi, x)|^2) = \int_{\bT} |x|^2 |\psi(t, \xi, x)|^2 dx.
\end{align}
Note that $G(|\psi|^2)$ is the second spatial moment of the density function $|\psi|^2$, which is important in checking {the localization phenomenon} (see also Example \ref{example: Anderson localization}). For both MC and QMC, we consider the root-mean-square (RMS) error $$RMS:=\sqrt{\mathbb{E}^{\Delta}\left[\Big|\mathbb{E}[\mathcal{G}](t_n)-Q_{m,N}[G(|I_M\psi_m^n|^2)]\Big|^2
\right]}.$$
As given in \cite{Sloan Act}, an unbiased estimator of the RMS for MC reads
\begin{align} \label{eq: RMS error for MC}
RMS\approx & \sqrt{ \frac{1}{N_{\text{MC}}(N_{\text{MC}} - 1)} \sum_{p = 1}^{N_{\text{MC}}} \left( G(|I_M \psi^n_m(\xi_{\text{MC}}^{(p)})|^2) - Q^{\text{MC}}_{m, N_{\text{MC}}}[G(|I_M \psi^n_m|^2)] \right)^2 },
\end{align}
where $Q^{\text{MC}}_{m, N_{\text{MC}}}[G(|I_M \psi^n_m|^2)] = \sum_{p = 1}^{N_{\text{MC}}} G(|I_M \psi^n_m(\xi_{\text{MC}}^{(p)})|^2)$, and the one for QMC reads
\begin{align} \label{eq: RMS error for QMC}
RMS \approx & \sqrt{ \frac{1}{R(R - 1)} \sum_{r = 1}^{R} \left( Q^{(r)}_{n, N_{\text{QMC}}}[G(|I_M \psi^n_m|^2)] - \overline{Q}_{m, N_{\text{QMC}, R}}[G(|I_M \psi^n_m|^2)] \right)^2 },
\end{align}
where $Q^{(r)}_{n, N_{\text{QMC}}}$ and $\overline{Q}_{m, N_{\text{QMC}, R}}$ are defined in \eqref{eq: QMC approximation using one single shift} and \eqref{eq: final QMC approximation} respectively. The result for $m = 16$ is shown in Figure \ref{fig: QMC convergence for m = 16}.

From the results in Figure \ref{fig: QMC convergence}, we observe approximately $\mathcal{O}(N_{\text{tot}}^{-1/2})$ convergence for MC and approximately $\mathcal{O}(N_{\text{tot}}^{-1})$ convergence for QMC. We remark that the decaying rate of the potential (\ref{eq: potential of convergence test}) in this example is in fact weaker than what is imposed in Assumption \ref{assump2}. The almost-linear  convergence rate of QMC is still observed, and this indicates that the condition on the potential for analysis might be relieved.


\end{example}

\begin{example}[Convergence in time \& physical space]
\label{example: convergence of TSFP}
Next, we test the accuracy of QMC-TSFP in time and physical space by considering the same setting  in Example \ref{example: convergence of QMC} with $m = 6$, where the reference solution is obtained as before. To investigate the convergence in time, we fix $R = 10, N_{\text{QMC}} = 2^{20}, h = \frac{\pi}{64}$ in QMC-TSFP and compute the numerical solution by $\tau = \frac{1}{40}, \frac{1}{80}, \frac{1}{160}, \frac{1}{320}$. For the convergence in space, we fix $R = 10, N_{\text{QMC}} = 2^{20}, \tau = 10^{-4}$ and take $h = \frac{\pi}{4}, \frac{\pi}{8}, \frac{\pi}{16}, \frac{\pi}{32}$. We still consider the $L^2$ relative error \eqref{eq: L2 relative error of convergence test} at $t_n=1$. Temporal convergence is shown in Figure \ref{fig: temporal convergence for m = 6}, and spatial convergence is shown in Figure \ref{fig: spatial convergence for m = 6}. Clearly from Figure \ref{fig: TSFP convergence}, we can observe the second-order convergence rate in time and the exponential convergence rate in space.

\begin{figure}[t!]
  \centering
  \subcaptionbox{$L^2$ relative error \eqref{eq: L2 relative error of convergence test} in time \label{fig: temporal convergence for m = 6}}{\includegraphics[width=0.48\textwidth]{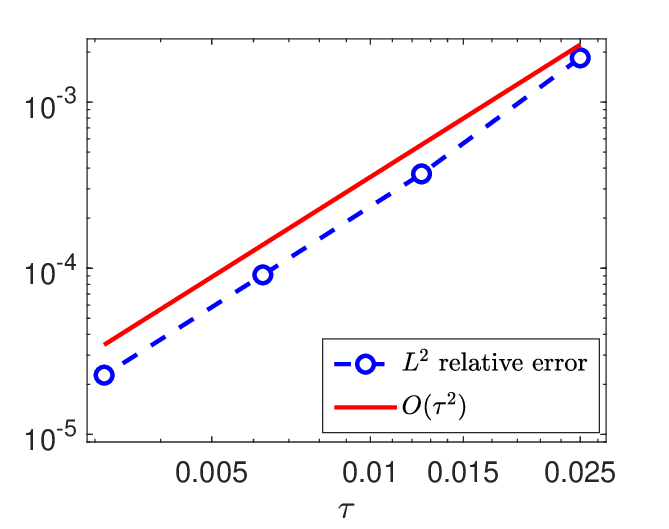}}
  \subcaptionbox{$L^2$ relative error \eqref{eq: L2 relative error of convergence test} in space \label{fig: spatial convergence for m = 6}}{\includegraphics[width=0.48\textwidth]{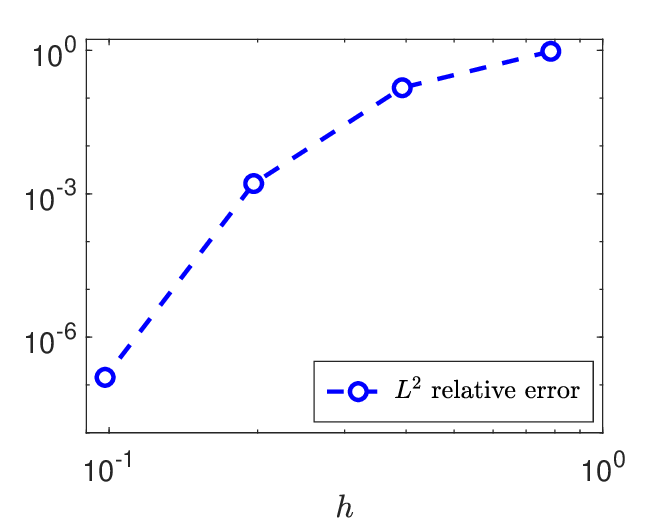}}
  \caption{Convergence of QMC-TSFP in time and physical space.}
  \label{fig: TSFP convergence}
\end{figure}

\end{example}

\subsection{{Simulation of wave propagation}}

\begin{example}
\label{example: Anderson localization}
{
Consider \eqref{nls trun} with $\alpha >0, \bT = [-6\pi, 6\pi]$,
$ \psi_{in}(x)= 1/\sqrt{\pi}\exp(-x^2)$
and
\begin{align} \label{eq: potential of Anderson localtization}
V_m(\bxi, x) = 1 + \sigma \sum_{j = 1}^{32} \frac{1}{j^{\frac{3}{2}}} \xi_j \cos(j x),
\end{align}
with $\{\xi_j\}$ the i.i.d. random variables uniformly distributed on $[-1, 1]$ and $\sigma \ge 0$ denoting the intensity of the randomness. Here the NLS (\ref{nls trun}) is equipped with a defocusing nonlinearity and the initial density function is exponentially localized in the physical domain. We solve (\ref{nls trun}) to simulate the wave propagations till the time $t_n=T = 10$ by using the QMC-TSFP method with $R = 10, N_{QMC} = 2^{16}, \tau = \frac{1}{400}$ and $h = \frac{3\pi}{256}$. The physical observable $G(|\psi(t, \xi, x)|^2)$ in \eqref{eq: linear functional of second spatial moment} is now considered to  measure the spreading of the density function in the physical space at time $t \in [0, T]$, and the expectation of $G(|\psi(t, \xi, x)|^2)$ is approximated by
\begin{align*}
\mathbb{E}[G](t) \approx A(t) := \overline{Q}_{m, N_{\text{QMC}, R}}[G(|I_M \psi^n_m|^2)](t)
\end{align*}
with $\overline{Q}_{m, N_{\text{QMC}, R}}$  in \eqref{eq: final QMC approximation}.
}

\begin{figure}[t!]
  \centering
  \subcaptionbox{Temporal behaviour of $A(t)$ \label{fig: temporal behaviour of second spatial moment}}{\includegraphics[width=0.48\textwidth]{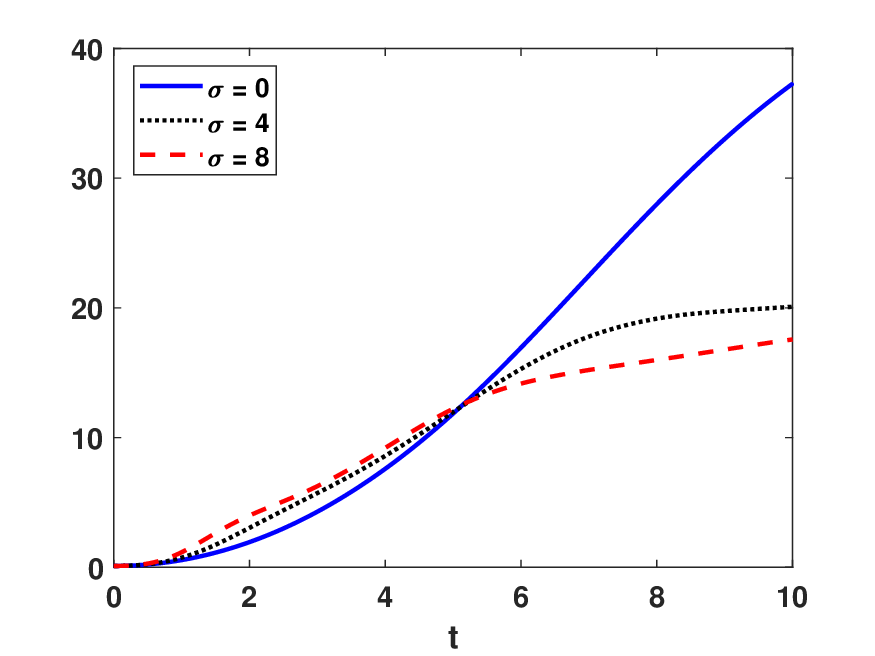}}
  \subcaptionbox{Distribution of $\Psi(x)$ at $t_n= 10$ \label{fig: expectation of density at final time}}{\includegraphics[width=0.48\textwidth]{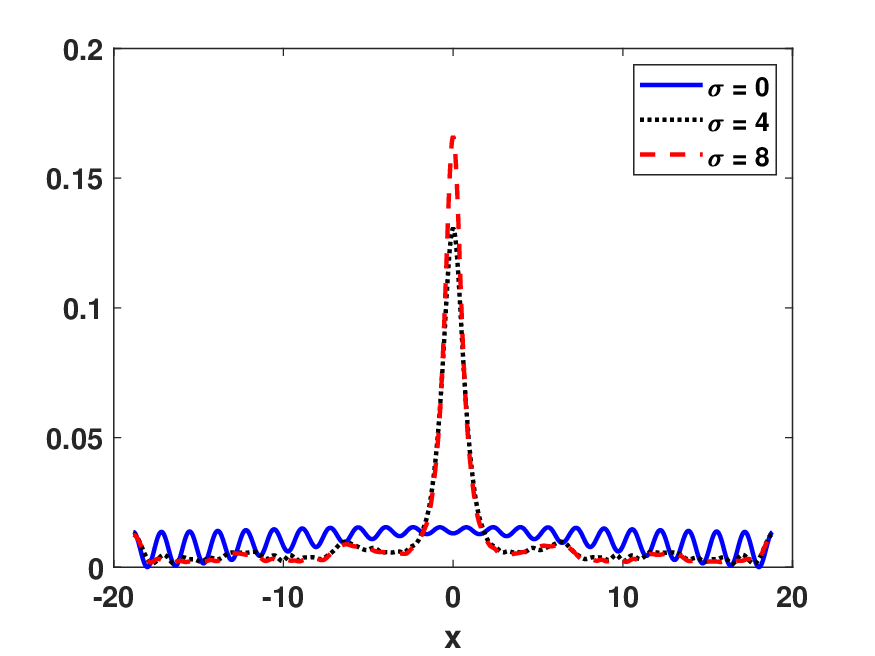}}
  \caption{Simulation of \textcolor{red}{wave propagation} in \textcolor{red}{random} NLS with $\alpha=1$.}
  \label{fig: Anderson localization}
\end{figure}

\begin{figure}[t!]
  \centering
  \subcaptionbox{$\sigma = 4$}{\includegraphics[width=0.48\textwidth]{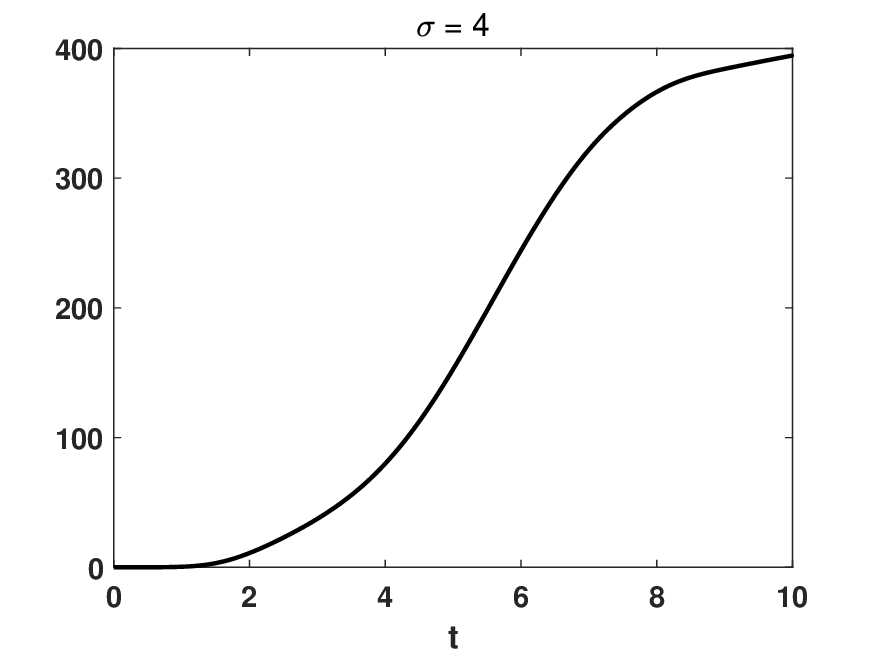}}
  \subcaptionbox{$\sigma = 8$}{\includegraphics[width=0.48\textwidth]{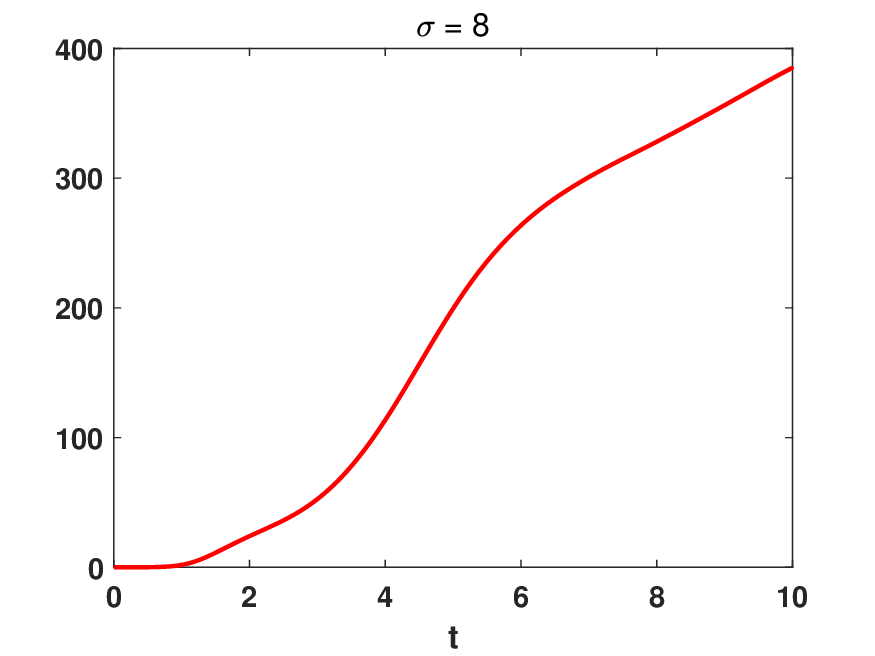}}
  \caption{Sample variances of $G(|\psi(t, \xi, x)|^2)$ with $\alpha = 1$.}
  \label{fig: Anderson localization variance of second spatial moment}
\end{figure}

\begin{figure}[t!]
  \centering
  \subcaptionbox{$\sigma = 4$}{\includegraphics[width=0.48\textwidth]{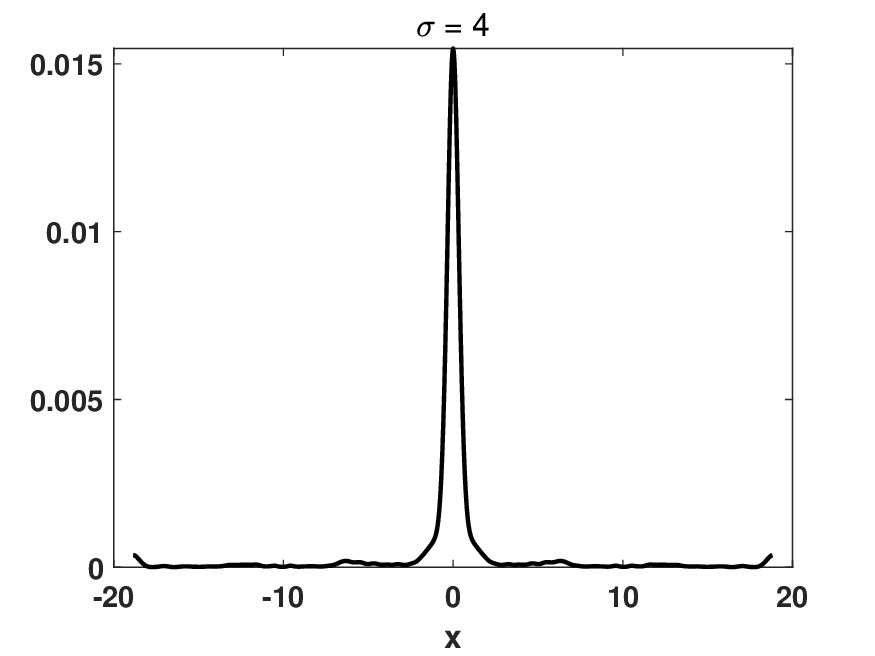}}
  \subcaptionbox{$\sigma = 8$}{\includegraphics[width=0.48\textwidth]{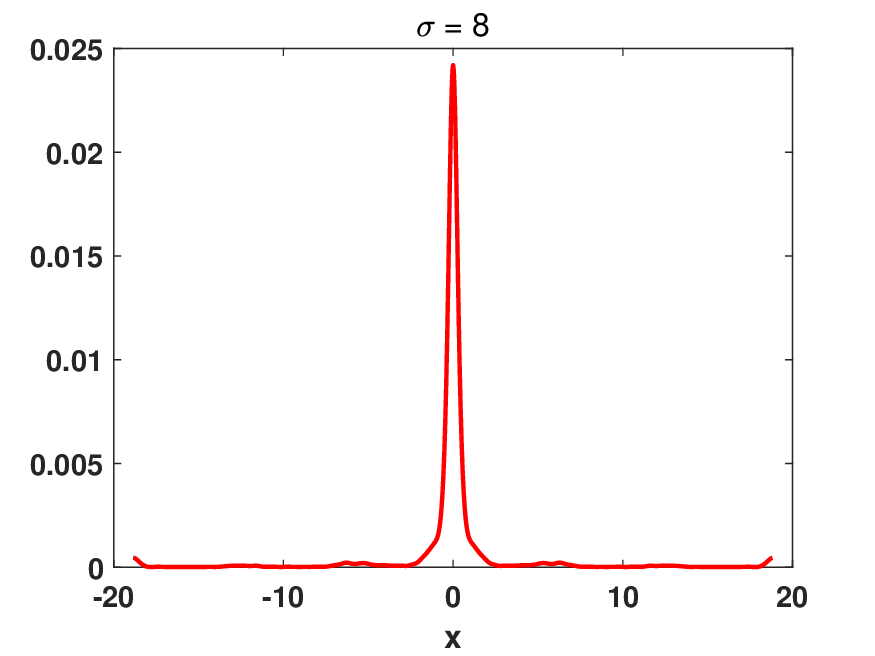}}
  \caption{Sample variances of $|\psi(T, \xi, x)|^2$ with $\alpha = 1$.}
  \label{fig: Anderson localization variance of density final time}
\end{figure}

\begin{figure}[t!]
  \centering
  \subcaptionbox{Temporal behaviour of $A(t)$}{\includegraphics[width=0.48\textwidth]
  {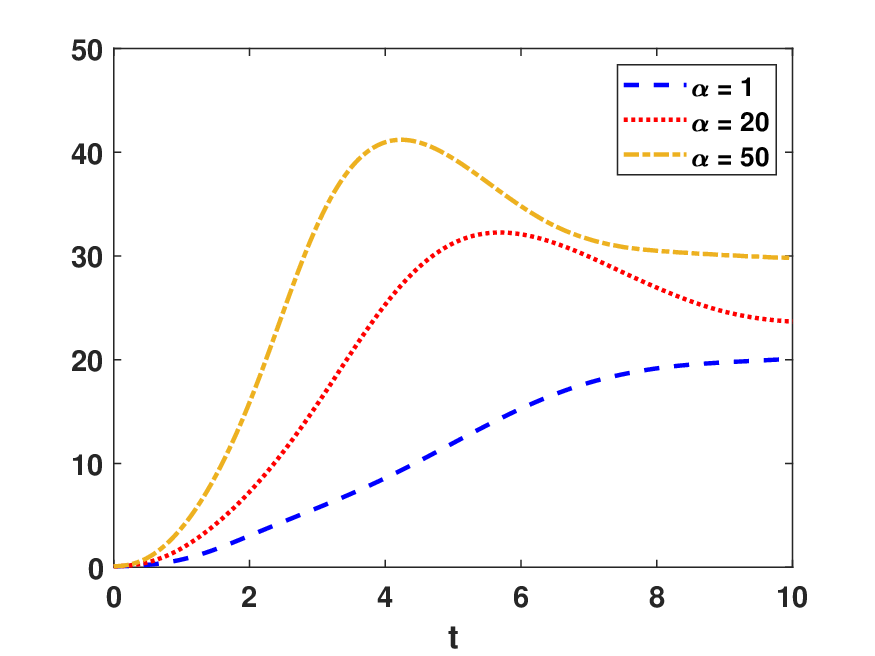}}
  \subcaptionbox{Distribution of $\Psi(x)$ at $t_n= 10$}{\includegraphics[width=0.48\textwidth]
  {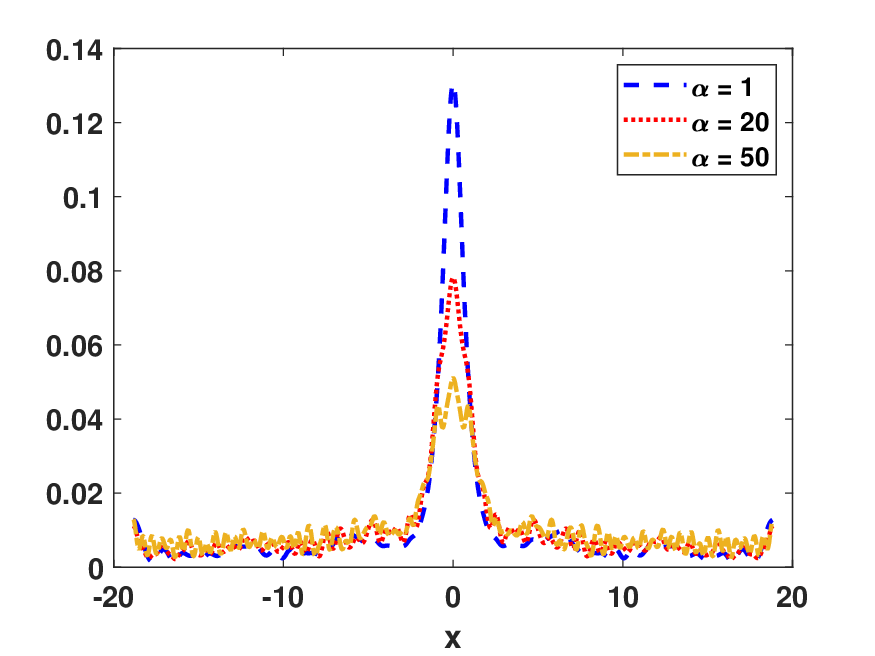}}
  \caption{Simulation of {wave propagation} in {random} NLS with $\sigma=4$.}
  \label{fig: Anderson localization nonlinearity}
\end{figure}

\begin{figure}[t!]
  \centering
  \subcaptionbox{$\alpha = 1$}{\includegraphics[width=0.32\textwidth]
  {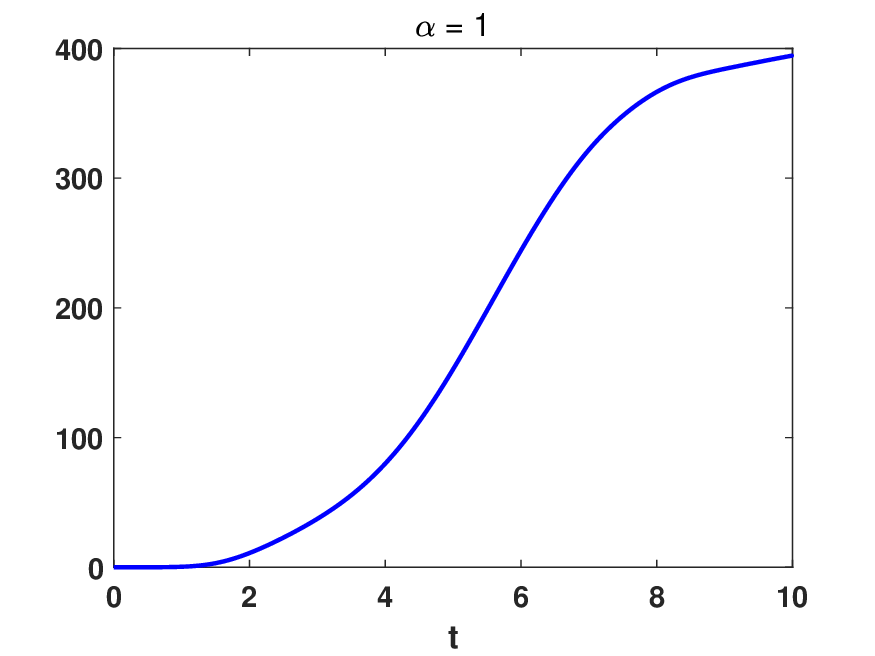}}
  \subcaptionbox{$\alpha = 20$}{\includegraphics[width=0.32\textwidth]
  {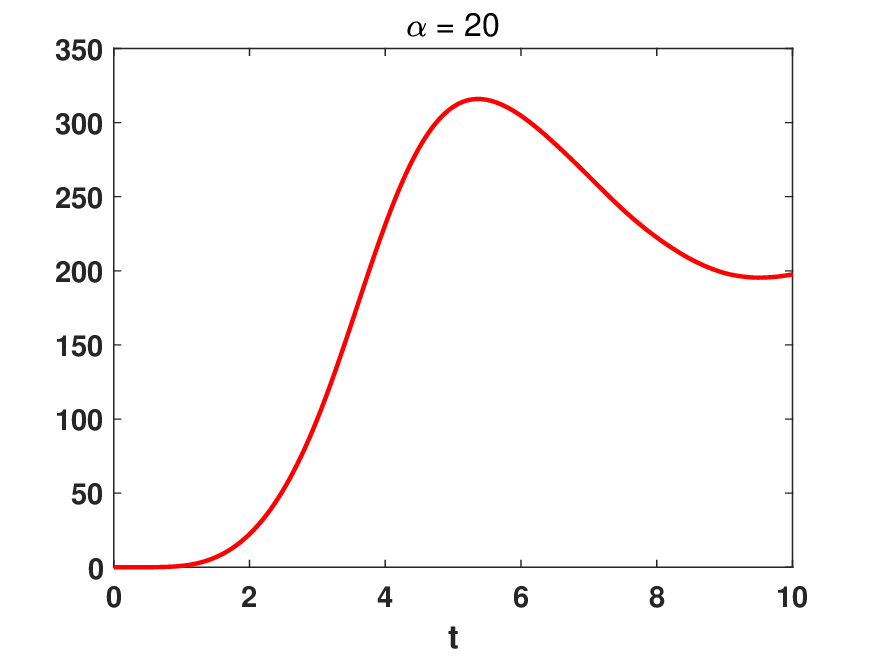}}
  \subcaptionbox{$\alpha = 50$}{\includegraphics[width=0.32\textwidth]
  {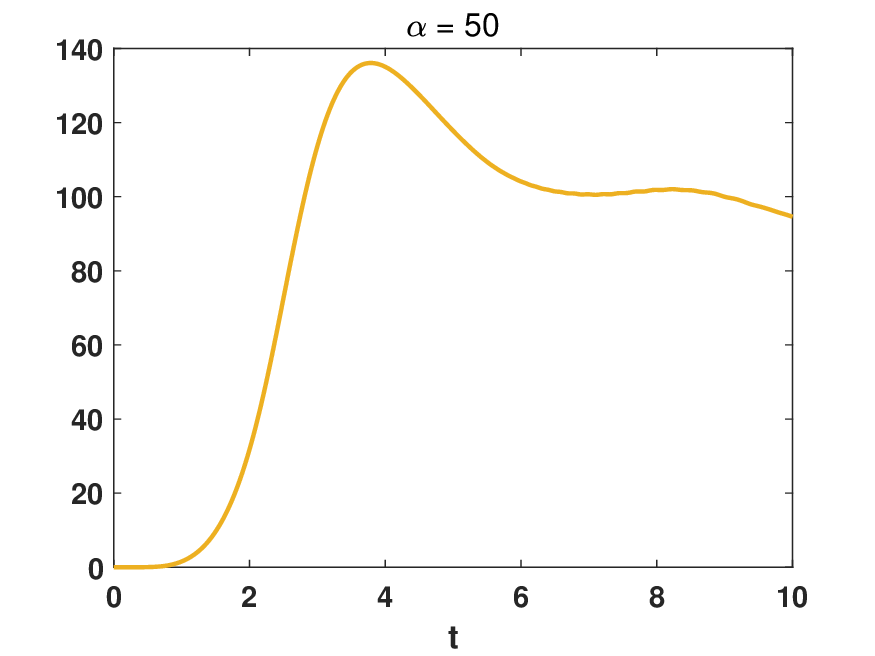}}
  \caption{Sample variances of $G(|\psi(t, \xi, x)|^2)$ with $\sigma = 4$.}
  \label{fig: Anderson localization nonlinearity variance of second spatial moment}
\end{figure}

\begin{figure}[t!]
  \centering
  \subcaptionbox{$\alpha = 1$}{\includegraphics[width=0.32\textwidth]
  {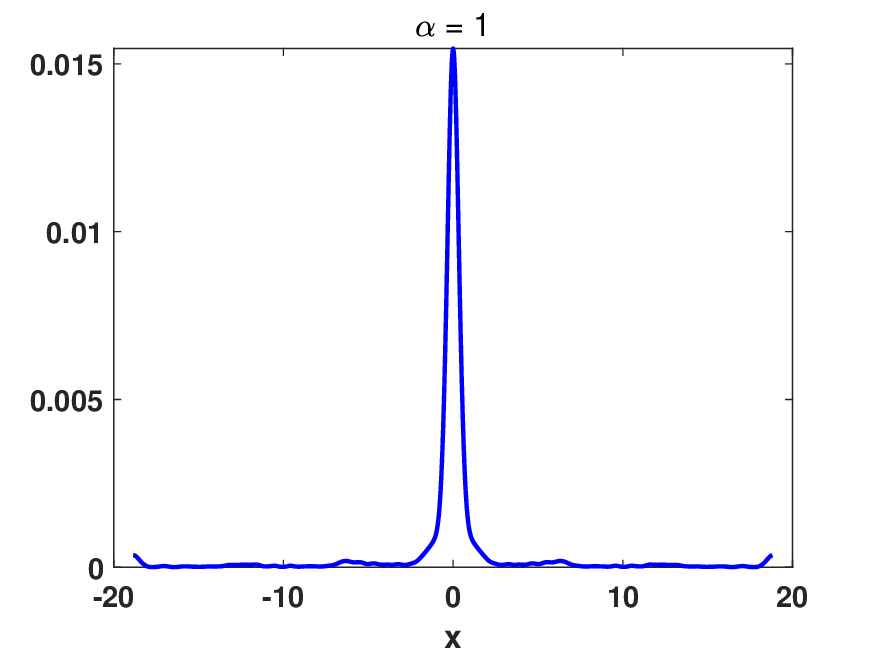}}
  \subcaptionbox{$\alpha = 20$}{\includegraphics[width=0.32\textwidth]
  {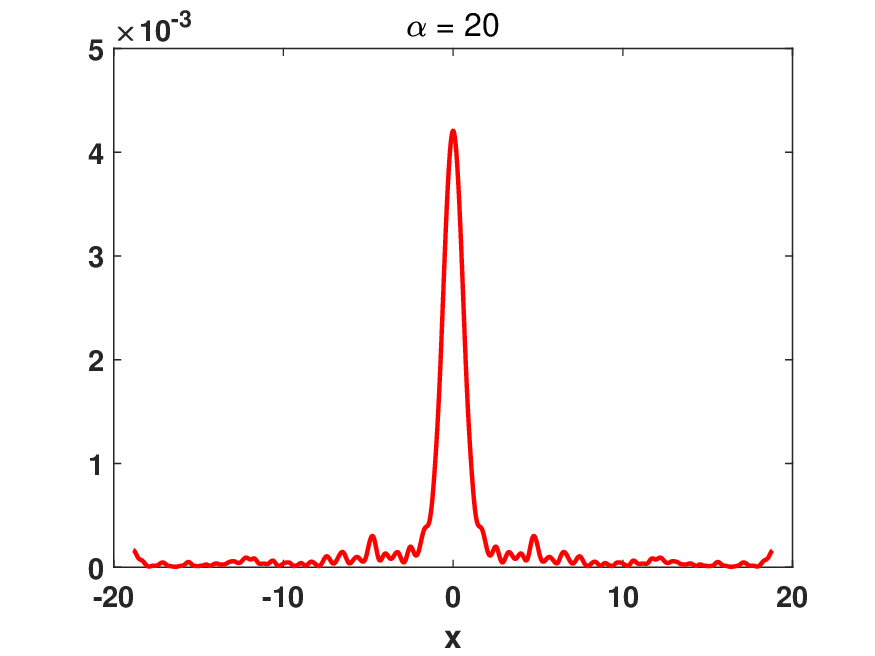}}
  \subcaptionbox{$\alpha = 50$}{\includegraphics[width=0.32\textwidth]
  {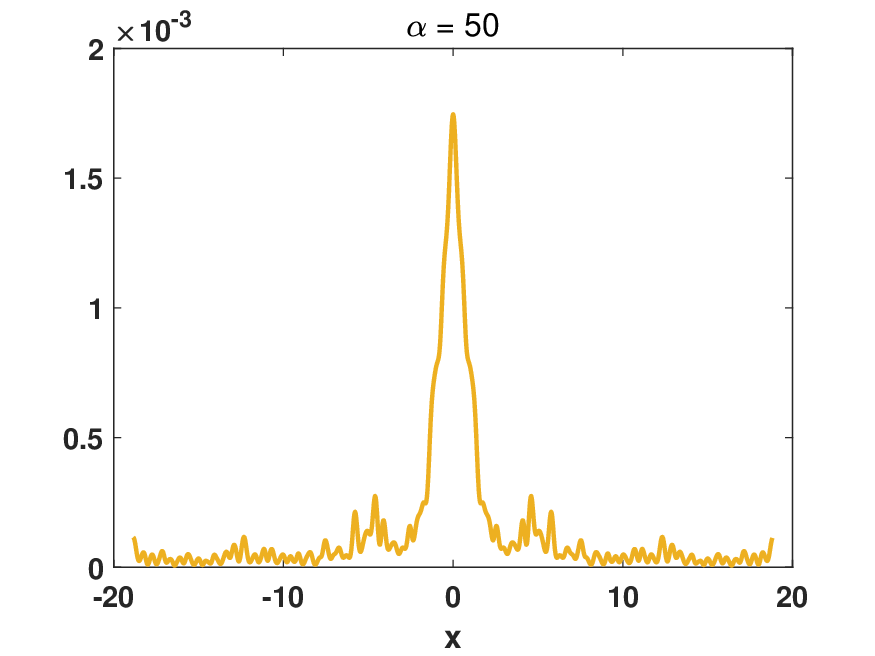}}
  \caption{Sample variances of $|\psi(T, \xi, x)|^2$ with $\sigma = 4$.}
  \label{fig: Anderson localization nonlinearity variance of density final time}
\end{figure}

{
Fixing $\alpha=1$, the temporal behaviour of $A(t)$ is shown in Figure \ref{fig: temporal behaviour of second spatial moment} for different $\sigma$, and the quantity
$$
\Psi(x):=\frac{1}{RN}\sum_{r=1}^{R}\sum_{p=1}^N |I_M \psi^n_m(\xi^{(r,p)},x)|^2,
$$
which approximates the expectation of the density $|\psi(T, \xi, x)|^2$ at the final time $T = 10$, is shown in Figure \ref{fig: expectation of density at final time}. In addition, the sample variances of the physical observable $G(|\psi(t, \xi, x)|^2)$ and the density $|\psi(T, \xi, x)|^2$ at the final time $T = 10$ are shown in Figures \ref{fig: Anderson localization variance of second spatial moment} and \ref{fig: Anderson localization variance of density final time}, respectively. In this case, we observe pure diffusion when randomness is absent in the potential \eqref{eq: potential of Anderson localtization}, and we observe localization of the expected density function within the computational time when randomness presents in the potential \eqref{eq: potential of Anderson localtization}.
When the randomness intensity $\sigma$ increases, the profile becomes more localized. By fixing $\sigma=4$ in (\ref{nls trun}), the corresponding results for different $\alpha$ are provided in Figures \ref{fig: Anderson localization nonlinearity}, \ref{fig: Anderson localization nonlinearity variance of second spatial moment} and \ref{fig: Anderson localization nonlinearity variance of density final time}. In this case, the profile of the expected density expands more as $\alpha$ becomes larger. This shows the defocusing nonlinear effect on the wave propagation.
}

\end{example}

\section{Conclusion}\label{sec:conclusion}

We proposed to combine the randomly shifted QMC lattice rule with the time-splitting Fourier pseudospectral method for solving the nonlinear Schr\"{o}dinger equation with {random potential}. We analysed the convergence in random space by using the technique of the weighted Sobolev space. The nonlinearity in the equation introduces difficulties in estimating the parametric regularity of the solution, and the physical observable considered here is typically a nonlinear functional of the solution. We propose sufficient conditions to show the existence of a QMC rule that can achieve the almost-linear and dimension-independent convergence rate for the expected value of the physical observable. The full error estimate of the scheme is established which also covers the convergence in time and  in the physical space. Numerical examples are presented to verify the theoretical result and simulations are done to investigate {the wave propagation} in the {random} NLS.


\section*{Acknowledgement}
\noindent
Z. Zhang is supported by Hong Kong RGC grant (projects 17300318 and 17307921), NSFC 12171406, a Seed Funding from the HKU-TCL Joint Research Centre for Artificial Intelligence, and the outstanding young researcher award of HKU (2020-21). X. Zhao is supported by NSFC 12271413, 11901440 and the Natural Science Foundation of Hubei Province 2019CFA007. The computations were performed using research computing facilities provided by Information Technology Services, the University of Hong Kong.

\appendix

\section{Some useful tools for studying NLS}
\label{appendix: useful tools}

{
To study the regularity of the solution of \eqref{nls} in the physical space and analyze the error resulting from dimension truncation, we need two useful tools introduced in \cite{tao2006nonlinear}. We briefly review these two tools in this section.}

{
\paragraph{\textbf{The bootstrap-type argument}} The bootstrap-type argument is an application of the bootstrap principle or the continuity method, which can be viewed as a continuous analogue of the principle of mathematical induction. The abstract bootstrap principle works in the following way.
\begin{proposition}
Let $I$ be a time interval, and for each $t \in I$ suppose we have two statements, a ``hypothesis'' $\mathbf{H}(t)$ and a ``conclusion'' $\mathbf{C}(t)$. Suppose we can verify the following four assertions:
\begin{enumerate}[(a)]
  \item (Hypothesis implies conclusion) If $\mathbf{H}(t)$ is true for some time $t \in I$, then $\mathbf{C}(t)$ is also true for for that time $t$.
  \item (Conclusion is stronger than hypothesis) If $\mathbf{C}(t)$ is true for some $t \in I$, then $\mathbf{H}(t^\prime)$ is true for all $t^\prime \in I$ in a neighbourhood of $t$.
  \item (Conclusion is closed) If $t_1, t_2, \ldots$ is a sequence of times in $I$ which converges to another time $t \in I$, and $\mathbf{C}(t_n)$ is true for all $t_n$, then $\mathbf{C}(t)$ is true.
  \item (Base case) $\mathbf{H}(t)$ is true for at least one time $t \in I$.
\end{enumerate}
Then $\mathbf{C}(t)$ is true for all $t \in I$.
\end{proposition}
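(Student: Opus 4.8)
The plan is to prove the statement by the standard \emph{continuity method}. Introduce the set of ``good'' times
\[
\Omega := \{\, t \in I : \mathbf{C}(t) \text{ holds} \,\},
\]
and show that $\Omega$ is nonempty, relatively open in $I$, and relatively closed in $I$. Since $I$ is an interval, it is connected, and the only nonempty subset of a connected space that is simultaneously open and closed is the whole space; hence $\Omega = I$, which is exactly the assertion that $\mathbf{C}(t)$ holds for every $t \in I$.

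The three properties of $\Omega$ are each a direct translation of the hypotheses, carried out in this order. For nonemptiness, the base case (d) supplies a time $t$ with $\mathbf{H}(t)$ true, and (a) upgrades it to $\mathbf{C}(t)$, so $t \in \Omega$. For openness, fix any $t \in \Omega$; then $\mathbf{C}(t)$ holds, (b) produces a neighbourhood of $t$ in $I$ on which $\mathbf{H}$ holds, and (a) converts this into $\mathbf{C}$ holding throughout that neighbourhood, so the neighbourhood lies in $\Omega$. For closedness, take a sequence $t_n \in \Omega$ converging to some $t \in I$; since $\mathbf{C}(t_n)$ holds for all $n$, assertion (c) yields $\mathbf{C}(t)$, whence $t \in \Omega$.

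There is no genuine analytic obstacle here: the content of the principle is purely topological, and the whole argument is the mechanical packaging of (a)--(d) into the nonemptiness, openness, and closedness of $\Omega$. The only point that warrants care is that ``open'' and ``closed'' must be read in the \emph{relative} topology of $I$ — the interval may be closed or half-open, so at its endpoints the neighbourhoods in (b) and the sequences in (c) must be understood as lying within $I$ — and that the decisive structural input is the connectedness of the interval, which is precisely what excludes a proper nonempty clopen $\Omega$ and forces $\Omega = I$.
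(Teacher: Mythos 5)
Your proof is correct and complete: the clopen-plus-connected argument, with the base case (d)+(a) giving nonemptiness, (b)+(a) giving relative openness, and (c) giving (sequential, hence relative) closedness of the set $\Omega$ of good times, is exactly the standard proof of the abstract bootstrap principle. The paper itself does not supply a proof — it defers to Tao's book \cite[Chapter 1.3]{tao2006nonlinear} — and the argument given there is the same continuity-method packaging you describe, so there is nothing to add beyond your own remark that neighbourhoods and limits must be taken relative to $I$.
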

The proof of this proposition can be found in \cite[Chapter 1.3]{tao2006nonlinear}. Moreover, for an illustration of how the bootstrap-type argument is applied to proving the local well-posedness of NLS, we refer readers to the proof of Proposition 3.8 in \cite{tao2006nonlinear}.}

{
\paragraph{\textbf{The product lemma}} As a useful tool from harmonic analysis, the product lemma reads as follows:
\begin{lemma}
For all $f, g \in H^s(\bT)$ with $s\geq1$, we have
\begin{align}
\| f g \|_{H^s(\bT)} \le C_{s} \left( \| f \|_{H^s(\bT)}  \| g \|_{L^{\infty}(\bT)} + \| f \|_{L^{\infty}(\bT)} \| g \|_{H^s(\bT)} \right),
\end{align}
where $C_{s}>0$ is a constant dependent on $s$.
\end{lemma}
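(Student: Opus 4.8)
The plan is to prove this tame (Moser-type) product estimate by a Littlewood--Paley decomposition together with Bony's paraproduct calculus, which covers the full range $s \geq 1$ (in fact any $s > 0$) uniformly and is well adapted to the torus through the square-function characterization
\[
\|f\|_{H^s(\bT)}^2 \approx |\widehat{f}(0)|^2 + \sum_{j \geq 0} 2^{2js}\|P_j f\|_{L^2(\bT)}^2,
\]
where $P_j$ projects onto Fourier frequencies $|\xi| \sim 2^j$ and $S_j = \sum_{k < j} P_k$ is the low-frequency truncation. First I would fix a dyadic partition of unity and record the two standard ingredients: the uniform boundedness of the frequency operators on $L^\infty$, i.e. $\|S_{j}f\|_{L^\infty} \lesssim \|f\|_{L^\infty}$ and $\|P_j f\|_{L^\infty} \lesssim \|f\|_{L^\infty}$ with constants independent of $j$ (via Young's inequality, the convolution kernels having uniformly bounded $L^1$ norm), and the almost-orthogonality of the blocks $P_j$. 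I would then split the product by Bony's decomposition
\[
fg = T_f g + T_g f + R(f,g),
\]
with the low--high paraproduct $T_f g = \sum_j S_{j-2}f\,P_j g$, the symmetric high--low term $T_g f = \sum_j S_{j-2}g\,P_j f$, and the high--high remainder $R(f,g) = \sum_{|j-k|\le 1} P_j f\, P_k g$.

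The two paraproduct terms form the easy, symmetric part. Each summand $S_{j-2}f\,P_j g$ in $T_f g$ is frequency-localized near $2^j$, so only boundedly many $j$ feed into a fixed output block $P_l(T_f g)$; bounding $\|S_{j-2}f\|_{L^\infty} \lesssim \|f\|_{L^\infty}$ and invoking almost-orthogonality together with the square-function norm yields $\|T_f g\|_{H^s} \lesssim \|f\|_{L^\infty}\|g\|_{H^s}$, and by symmetry $\|T_g f\|_{H^s} \lesssim \|g\|_{L^\infty}\|f\|_{H^s}$. No restriction on $s$ is needed here.

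The main obstacle is the high--high remainder $R(f,g)$: the product $P_j f\,P_j g$ has frequencies up to $\sim 2^{j+1}$ and therefore contributes to \emph{every} lower output block $P_l$ with $l \le j+2$, so the dyadic sum no longer localizes. I would estimate $\|P_l R(f,g)\|_{L^2} \lesssim \sum_{j \ge l-2} \|P_j f\|_{L^\infty}\|P_j g\|_{L^2}$, multiply by $2^{ls}$, and insert the gain factor $2^{(l-j)s}$. The decisive point is that \emph{because $s \geq 1 > 0$} the tail $\sum_{j \ge l-2} 2^{(l-j)s}$ is summable; a discrete Young/Schur argument then converts the convolution in the frequency index into the product of the $\ell^\infty$ bound $\sup_j \|P_j f\|_{L^\infty} \lesssim \|f\|_{L^\infty}$ and the $\ell^2$ bound $\big(\sum_j 2^{2js}\|P_j g\|_{L^2}^2\big)^{1/2} \lesssim \|g\|_{H^s}$, giving $\|R(f,g)\|_{H^s} \lesssim \|f\|_{L^\infty}\|g\|_{H^s}$ (symmetrizable to the other factor). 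Collecting the three bounds and handling the zero Fourier mode trivially, since $|\widehat{fg}(0)| \le \|f\|_{L^\infty}\|g\|_{L^2}$, produces the claimed inequality with $C_s$ depending on $s$ only through the summability constant.

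Finally, I note that if one only needs integer $s$ (which is all the applications in this paper actually invoke), a more elementary route is available: expand $\partial^s(fg)$ by the Leibniz rule and bound each mixed term $\|\partial^k f\,\partial^{s-k}g\|_{L^2}$ by the Gagliardo--Nirenberg interpolation inequalities $\|\partial^k f\|_{L^{2s/k}} \lesssim \|f\|_{L^\infty}^{1-k/s}\|f\|_{H^s}^{k/s}$, whose Hölder-exponent pairing collapses to exactly the two terms on the right-hand side. Either route delivers the stated estimate; I would present the Littlewood--Paley argument as the primary proof since it yields the result for all real $s \geq 1$ at once.
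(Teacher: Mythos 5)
Your proof is correct, and it takes a genuinely different route from the paper's. The paper works directly on Fourier coefficients: it splits the Sobolev weight through the convexity inequality $\left[1+(l_1+l_2)^2\right]^{s/2}\le C_s\left[(1+l_1^2)^{s/2}+(1+l_2^2)^{s/2}\right]$ (this is where the hypothesis $s\ge 1$ enters for the paper), then squares the resulting convolution bound, applies Parseval to reach the intermediate inequality $\|fg\|_{H^s}^2\le C_s\left(\|f\|_{H^s}^2\|g\|_{L^2}^2+\|g\|_{H^s}^2\|f\|_{L^2}^2\right)$, and finally upgrades $L^2$ to $L^\infty$ using compactness of $\bT$ and Sobolev embedding. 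Your Bony-paraproduct argument instead localizes the difficulty precisely: the two paraproducts need no condition on $s$ at all, and only the high--high remainder uses positivity of $s$ (so you get the estimate for every real $s>0$, not just $s\ge1$), with the $L^\infty$ factors emerging directly rather than through a weaker $L^2$ intermediate. Your route is in fact more robust than the paper's: the paper's ``by taking the square'' step is not literally valid for an infinite convolution sum (passing from $\left(\sum_{l_1+l_2=l}a_{l_1}b_{l_2}\right)^2$ to $\sum_{l_1+l_2=l}a_{l_1}^2b_{l_2}^2$ requires a Cauchy--Schwarz with a summable weight, and the natural repair yields $\|\hat g\|_{\ell^1}$ in place of $\|g\|_{L^2}$); indeed the intermediate $L^2$-version of the inequality fails for Dirichlet-kernel-type examples $f=g=\sum_{k=1}^N \fe^{ikx}$, where the left side grows like $N^{s+3/2}$ but the claimed right side only like $N^{s+1}$, whereas the tame $L^\infty$ statement you prove is sharp on the same example. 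Your fallback for integer $s$ via the Leibniz rule and Gagliardo--Nirenberg interpolation is the classical Moser argument and also differs from the paper; it would suffice for every invocation of the lemma in this paper. The trade-off is machinery versus scope: the paper's computation is short and self-contained at the level of Fourier series, while your approach presupposes the Littlewood--Paley apparatus but delivers the full range $s>0$ and a sound treatment of the critical high--high interactions.
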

\begin{proof}
Let $\bT=(0,2\pi)$ for simplicity, and the Fourier expansion reads $f(x)=\sum_{l\in\bZ}\hat{f}_l\fe^{ilx}$ where $\hat{f}_l$ denotes the Fourier coefficient.  For some $l\in\bZ$, we consider
\begin{align*}
  (1+l^2)^{s/2}|\widehat{(fg)}_l|=(1+l^2)^{s/2}\left|\sum_{l_1+l_2=l}\hat{f}_{l_1}\hat{g}_{l_2}\right|
  \leq\sum_{l_1+l_2=l}\left[1+(l_1+l_2)^2\right]^{s/2}|\hat{f}_{l_1}\hat{g}_{l_2}|.
\end{align*}
It is direct to check that
\begin{align*}
 \left[1+(l_1+l_2)^2\right]^{s/2}\leq 2^{s/2}\left[\sqrt{1+{l_1}^2}+\sqrt{1+{l_2}^2}\right]^s
 \leq C_s\left[(1+{l_1}^2)^{s/2}+(1+{l_2}^2)^{s/2}\right],
\end{align*}
where the last inequality used the convexity of the function $x^s$ for $x>0,\,s\geq1$. In the following, the constant $C_s>0$ may be different at each occurrence. Combining the above, we deduce
$
  (1+l^2)^{s/2}|\widehat{(fg)}_l|
  \leq C_s\sum_{l_1+l_2=l}(1+l_1^2)^{s/2}|\hat{f}_{l_1}\hat{g}_{l_2}|
  +C_s\sum_{l_1+l_2=l}(1+l_2^2)^{s/2}|\hat{f}_{l_1}\hat{g}_{l_2}|.
$
By taking the square, we find
 \begin{align*}
(1+l^2)^{s}|\widehat{(fg)}_l|^2
  \leq C_s\sum_{l_1+l_2=l}(1+l_1^2)^{s}|\hat{f}_{l_1}\hat{g}_{l_2}|^2
  +C_s\sum_{l_1+l_2=l}(1+l_2^2)^{s}|\hat{f}_{l_1}\hat{g}_{l_2}|^2.
  \end{align*}
  By Parseval's identity and note $\|fg\|_{H^s}^2=\sum_{l\in\bZ}(1+l^2)^s|\widehat{(fg)}_l|^2$, we get
  $$\|fg\|_{H^s}^2\leq C_s(\|f\|_{H^s}^2\|g\|_{L^2}^2+\|g\|_{H^s}^2\|f\|_{L^2}^2).$$
The assertion of the lemma is obtained by noting $\| f \|_{L^{2}}\leq C\| f \|_{L^{\infty}}$ and  Sobolev's embedding $\| f \|_{L^{\infty}} \le C_{s} \| f \|_{H^s}$.
For more general statement of  product lemma,  see e.g., \cite{tao2006nonlinear}.
\end{proof}
}
%
%
%


\begin{thebibliography}{00}


\bibitem{altmann2020quantitative}
{{\sc R. Altmann, P. Henning, D. Peterseim}, \emph{Quantitative Anderson localization of Schr\"{o}dinger eigenstates under disorder potentials}, Math. Models Methods Appl. Sci., 30 (2020) pp. 917--955.
}

\bibitem{altmann2022localization}
{{\sc R. Altmann, P. Henning, D. Peterseim}, \emph{Localization and delocalization of ground states of Bose--Einstein condensates under disorder}, SIAM J. Appl. Math., 82 (2022) pp. 330--358.
}

\bibitem{Anderson}
{\sc P.W. Anderson}, \emph{Absence of diffusion in certain random lattices}, Phys. Rev., 109 (1958)  1492.

\bibitem{Besse}
{\sc X. Antoine, W. Bao, C. Besse}, \emph{Computational methods for the dynamics of the nonlinear Schr\"odinger/Gross-Pitaevskii equations}, Comput. Phys. Commun.,
184 (2013) pp. 2621-2633.

\bibitem{Adams}
{\sc R.A. Adams, J.J.F. Fournier}, \emph{Sobolev Spaces}, Academic Press, 2021.

\bibitem{babuvska2007stochastic}
{\sc I. Babu{\v{s}}ka, F. Nobile, R. Tempone}, \emph{A stochastic collocation method for elliptic partial differential equations with random input data}, SIAM J. Numer. Anal., 45 (2007) pp.~1005-1034.

\bibitem{BaoCai}
{\sc W. Bao,  Y. Cai},
{\em Mathematical theory and numerical methods for Bose-Einstein condensation}, Kinet. Relat. Models,  6 (2013) pp.~1-135.


\bibitem{Caflisch}
{\sc  R.E. Caflisch}, \emph{Monte Carlo and quasi-Monte Carlo methods}, Acta Numer., 7 (1998) pp. 1-49.


\bibitem{Conti}
{\sc C. Conti}, \emph{Solitonization of the Anderson localization}, Phy. Rev. A, 86 (2012) 061801.

\bibitem{Cohen}
{\sc A. Cohen, R. De Vore, Ch. Schwab}, \emph{Convergence rates of best N-term Galerkin approximations for a class of elliptic sPDEs}, Found. Comput. Math., 10 (2010) pp. 615-646.
%

\bibitem{Dick}
{\sc J. Dick}, \emph{On the convergence rate of the component-by-component construction of good lattice rules}, J. Complexity, 20 (2004) pp. 493-522.

\bibitem{Sloan Act}
{\sc J.~Dick, F.Y.~Kuo, I.H.~Sloan}, {\em High-dimensional integration: the quasi-Monte Carlo way}, Acta Numer., 22 (2013) pp.~133-288.

\bibitem{debussche3}
{\sc A. Debussche, J. Martin}, \emph{Solution to the stochastic Schr\"{o}dinger equation on the full space}, Nonlinearity, 32 (2019) pp. 1147-1174.

\bibitem{debussche1}
{\sc A. Debussche, H. Weber}, \emph{The Schr\"{o}dinger equation with spatial white noise potential}, Electron. J. Probab., 23 (2018) pp. 1-16.

\bibitem{Labbe}
{\sc L. Dumaz, C. Labb\'e}, \emph{Localization of the continuous Anderson Hamiltonian in 1-D}, Probab. Theory Relat. Fields, 176 (2020) pp. 353-419.

\bibitem{Faou}
{\sc E. Faou}, \emph{Geometric Numerical Integration and Schr\"odinger Equations}, European Math. Soc. Publishing House, Z\"{u}rich 2012.

\bibitem{filoche2012universal}
{{\sc M. Filoche, S. Mayboroda}, \emph{Universal mechanism for Anderson and weak localization}, Proc. Natl. Acad. Sci., 109 (2012) pp. 14761--14766.
}

\bibitem{prl2}
{\sc S. Flach, D.O. Krimer, Ch. Skokos}, \emph{Universal spreading of wave packets in disordered nonlinear systems}, Phys. Rev. Lett., 102 (2009)  024101.

\bibitem{soffer}
{\sc S.~Fishman, Y.~Krivolapov,  A.~Soffer}, \emph{The nonlinear Schr\"odinger equation with a random potential: results and puzzles}, Nonlinearity, 25 (2012) pp.~53-72.

\bibitem{CAL1}
{\sc F. Germinet, A. Klein}, \emph{A comprehensive proof of localization for continuous Anderson models with singular random potentials}, J. Eur. Math. Soc., 15 (2013) pp. 53-143.

\bibitem{Ghanem}
{\sc R.G. Ghanem, P.D. Spanos}, \emph{Stochastic Finite Elements}, Dover, 1991.

\bibitem{nonlinear wave}
{\sc N. Ghofraniha, S. Gentilini, V. Folli, E. Delre, C. Conti}, \emph{Shock waves in disordered media}, Phys. Rev. Lett., 109 (2012)  243902.

\bibitem{gilbert2019analysis}
{\sc A.D. Gilber, I.G. Graham, F.Y. Kuo, R. Scheichl, I.H. Sloan}, \emph{Analysis of quasi-Monte Carlo methods for elliptic eigenvalue problems with stochastic coefficients}, Numer. Math., 142 (2019) pp. 863-915.

\bibitem{ginibre1978class}
{\sc J.~Ginibre, G.~Velo}, {\em On a class of non linear {S}chr\"{o}dinger equations. {III}. {S}pecial theories in dimensions 1, 2 and 3},  Annales del'IHP Physique th\'{e}orique, 28 (1978) pp.~287-316.

\bibitem{Sloan NM}
{\sc I.G. Graham, F.Y. Kuo, J.A. Nichols, R. Scheichl, Ch. Schwab, I.H. Sloan}, \emph{Quasi-Monte Carlo finite element methods for elliptic
PDEs with lognormal random coefficients}, Numer. Math., 131 (2015) pp.~329-368.

\bibitem{Sloan JCP}
{\sc I.G. Graham, F.Y. Kuo, D. Nuyens, R. Scheichl, I.H. Sloan}, \emph{Quasi-Monte Carlo methods for elliptic PDEs with random coefficients and applications}, J. Comput. Phys., 230 (2011)  pp.~3668-3694.

\bibitem{Gu}
{\sc Y. Gu, T. Komorowski, L. Ryzhik}, \emph{The Schr\"{o}dinger equation with spatial white noise: The average wave function}, J. Funct. Anal., 274 (2018) pp. 2113-2138.

\bibitem{halton1960efficiency}
{\sc J.H. Halton}, {\em On the efficiency of certain quasi-random sequences of points in evaluating multi-dimensional integrals}, Numer. Math., 2 (1960)  pp.~84-90.

\bibitem{dnls-fd}
{\sc P. Henning, D. Peterseim}, \emph{Crank-Nicolson Galerkin approximations to nonlinear Schr\"{o}dinger equations with rough potentials}, Math. Models  Methods Appl. Sci., 27 (2017) pp. 2147-2184.

\bibitem{HuJin}
{\sc J. Hu, S. Jin, D. Xiu}, {\em A Stochastic Galerkin Method for Hamilton--Jacobi Equations with Uncertainty}, SIAM J. Sci. Comput., 37 (2015) pp. A2246--A2269.

\bibitem{Jin}
{\sc S. Jin, P. Markowich, C. Sparber},
\emph{Mathematical and computational methods for semiclassical Schr\"odinger equations}, Acta Numer., 20 (2011) pp. 121-209.

\bibitem{kallenberg1997foundations}
{\sc O.~Kallenberg}, {\em Foundations of modern probability}, vol.~2, Springer, 1997.


\bibitem{CAL2}
{\sc T. Kachman, S. Fishman, A. Soffer}, \emph{Numerical implementation of the multiscale and averaging methods
for quasi periodic systems}, Comput. Phys. Comm., 221 (2017) pp. 235-245.


\bibitem{Kuoweb}
{\sc F.Y. Kuo}, webpage, https://people.cs.kuleuven.be/~dirk.nuyens/qmc4pde/

\bibitem{focm}
{\sc F.Y. Kuo, D. Nuyens}, \emph{Application of quasi-Monte Carlo methods to elliptic PDEs with random diffusion coefficients-a survey of analysis and implementation}, Found. Comput. Math., 16 (2016) pp. 1631-1696.

\bibitem{kuo2011quasi}
{\sc F.Y. Kuo, C.~Schwab,  I.H. Sloan}, {\em Quasi-{M}onte {C}arlo methods
  for high-dimensional integration: the standard (weighted Hilbert space)
  setting and beyond}, ANZIAM J., 53 (2011) pp.~1-37.

\bibitem{kuo2012quasi}
{\sc F.Y. Kuo, C.~Schwab,  I.H. Sloan}, {\em Quasi-{M}onte {C}arlo finite element methods for a class of elliptic partial differential equations with random coefficients}, SIAM J. Numer. Anal., 50 (2012) pp.~3351-3374.


\bibitem{lee1985disordered}
{\sc P. Lee, T. Ramakrishnan}, \emph{Disordered electronic systems}, Rev. Modern Phys.,  57 (1985) pp. 287-337.


\bibitem{Lubich}
{\sc Ch. Lubich}, \emph{On  splitting  methods for Schr\"{o}dinger-Poisson  and  cubic  nonlinear
Schr\"{o}dinger equations}, Math. Comp.,  77 (2008) pp. 2141-2153.


\bibitem{pra} {\sc M. Piraud, P. Lugan, P. Bouyer, A. Aspect, L. Sanchez-Palencia},
\emph{Localization of a matter wave packet in a disordered potential}, Phys. Rev. A, 83 (2011)  031603.


\bibitem{niederreiter1992random}
{\sc H.~Niederreiter}, {\em Random number generation and quasi-{M}onte {C}arlo
  methods}, SIAM, 1992.


\bibitem{nobile2008sparse}
{\sc F. Nobile, R. Tempone, C. Webster}, {\em A sparse grid stochastic collocation method for partial differential equations with random input data}, SIAM J. Numer. Anal., 46 (2008) pp. 2309-2345.



\bibitem{Nuyens}
{\sc D. Nuyens, R. Cools}, \emph{Fast algorithms for component-by-component construction of rank-1 lattice rules in shift-invariant reproducing kernal Hilbert spaces}, Math. Comp., 75 (2006) pp. 903-920.

  \bibitem{Shepelyansky}
 {\sc A.S. Pikovsky, D.L. Shepelyansky}, {\em Destruction of Anderson localization by a weak nonlinearity}, Phys. Rev. Lett., 100 (2008)  094101.

 \bibitem{PRLnew}
{\sc L. Sanchez-Palencia, D. Cl\'{e}ment, P. Lugan, P. Bouyer, G. V. Shlyapnikov, A. Aspect},
\emph{Anderson localization of expanding Bose-Einstein condensates in random potentials}, Phys. Rev. Lett., 98 (2007)  210401.

 \bibitem{schwab2006karhunen}
{\sc C. Schwab, R. Todor},
\emph{Karhunen--Lo{\`e}ve approximation of random fields by generalized fast multipole methods}, J. Comput. Phys., 217 (2006) pp. 100-122.


\bibitem{Sloan_SINUM}
{\sc I.H. Sloan, F.Y. Kuo, S. Joe}, \emph{Constructing randomly shifted lattice rules in weighted Sobolev spaces}, SIAM J. Numer. Anal., 40 (2002) pp. 1650-1665.

\bibitem{sobol1967distribution}
{\sc I.M. Sobol}, {\em On the distribution of points in a cube and the
  approximate evaluation of integrals}, Zh. Vychisl. Mat. i Mat. Fiz., 7
  (1967) pp.~784-802.

\bibitem{Suzuki}
{\sc Y. Suzuki, G. Suryanarayana, D. Nuyens},  Strang splitting in combination with rank-1 and rank-r lattices for the time-dependent Schr\"odinger equation, SIAM J. Sci. Comput. 41 (2019) pp. B1254-B1283.

  \bibitem{Shen}
{\sc J. Shen, T. Tang, L. Wang}, {\emph{Spectral Methods: Algorithms, Analysis and Applications}},
Springer, 2011.

\bibitem{tao2006nonlinear}
{\sc T.~Tao}, {\em Nonlinear dispersive equations: local and global analysis},
  Amer. Math. Soc., 2006.

\bibitem{TangZhou}
{\sc  T. Tang, T. Zhou}, {\em Convergence analysis for stochastic collocation methods to scalar hyperbolic equations with a random wave speed}, Commun. Comput. Phys., 8 (2010) pp. 226-248.

\bibitem{wang2003strong}
{\sc X.~Wang}, {\em Strong tractability of multivariate integration using
  quasi-{M}onte {C}arlo algorithms}, Math. Comp., 72 (2003) pp.~823-838.

  \bibitem{Xiu}
{\sc D. Xiu, G.E. Karniadakis}, \emph{The Wiener--Askey polynomial chaos for stochastic differential equations}, SIAM J. Sci. Comput., 24 (2002) pp. 619-644.

\bibitem{Zhao}
{\sc X.~Zhao}, {\em Numerical integrators for continuous disordered nonlinear Schr\"odinger equation}, J. Sci. Comput., 89 (2021) 40.
\end{thebibliography}

\end{document}